\newcommand{\ig}[2]{\vcenter{\xy (0,0)*{\includegraphics[scale=#1]{fig/#2}} \endxy}}
\newcommand{\igv}[2]{\vcenter{\xy (0,0)*{\reflectbox{\includegraphics[scale=#1, angle=180]{fig/#2}}} \endxy}}
\newcommand{\igc}[2]{\begin{center} \includegraphics[scale=#1]{fig/#2} \end{center}}
\newcommand{\plabel}[3]{{
\labellist
\small\hair 2pt
\pinlabel #1 at #2 #3
\endlabellist
}}
\definecolor{myred}{rgb}{0.75,0,0}
\definecolor{mygreen}{rgb}{0,0.5,0}
\definecolor{myblue}{rgb}{0,0,0.65}
\newtheorem{thm}{Theorem}[section]
\newtheorem{lemma}[thm]{Lemma}
\newtheorem{prop}[thm]{Proposition}
\newtheorem{cor}[thm]{Corollary}
\newtheorem{claim}[thm]{Claim}
\newtheorem*{prop*}{Proposition}
\theoremstyle{definition}
\newtheorem{defn}[thm]{Definition}
\newtheorem{notation}[thm]{Notation}
\newtheorem{example}[thm]{Example}
\theoremstyle{remark}
\newtheorem{remark}[thm]{Remark}
\numberwithin{equation}{section}
  \def\gg{{\mathfrak g}}
    \def\RM{{\mathbb{R}}}
    \def\ZM{{\mathbb{Z}}}
    \def\CC{{\mathcal{C}}}
    \def\DC{{\mathcal{D}}}
    \def\FC{{\mathcal{F}}}
\def\HB{{\mathbf H}}
  \def\sb{{\mathbf s}}  
  \def\tb{{\mathbf t}}  \def\TC{{\mathcal{T}}}
  \def\vb{{\mathbf v}}  
  \def\xb{{\mathbf x}}  
  \def\yb{{\mathbf y}}  
  \def\zb{{\mathbf z}}
\let\phi=\varphi
\def\Z{{\mathbbm Z}}
\def\1{\mathbbm{1}}
\newcommand{\ul}{\underline}
\newcommand{\mc}[1]{\mathcal{#1}}
\newcommand{\ot}{\otimes}
\newcommand{\pa}{\partial}
\newcommand{\co}{\colon}
\renewcommand{\to}{\rightarrow}
\newcommand{\fancyto}{\rightsquigarrow}
\newcommand{\downto}{\searrow} 
\newcommand{\upto}{\nearrow}
\newcommand{\define}{:=}
\renewcommand{\sl}{\mathfrak{sl}}
\newcommand{\Bim}{\textbf{Bim}}
\newcommand{\Kar}{\textbf{Kar}}
\newcommand{\Hom}{{\rm Hom}}
\newcommand{\HOM}{{\rm HOM}}
\newcommand{\End}{{\rm End}}
\newcommand{\END}{{\rm END}}
\newcommand{\Res}{{\rm Res}}
\newcommand{\Ind}{{\rm Ind}}
\newcommand{\ii}{\underline{\textbf{\textit{i}}}}
\newcommand{\jj}{\underline{\textbf{\textit{j}}}}
\newcommand{\SBim}{\mathbb{S}\textrm{Bim}}
\newcommand{\BSBim}{\mathbb{BS}\textrm{Bim}}
\newcommand{\fooBim}{g\mathbb{BS}\textrm{Bim}}
\newcommand{\mmB}{\mathfrak{B}}
\newcommand{\qtwo}{\left[2\right]}
\newcommand{\qJ}{\left[J\right]}
\newcommand{\qK}{\left[K\right]}
\newcommand{\downoneto}{\downharpoonright}
\newcommand{\uponeto}{\upharpoonright}
\newcommand{\JJ}{\underline{\textbf{\textit{J}}}}
\newcommand{\fooDC}{g\DC}
\newcommand{\JBim}{{J}\mathbb{BS}\textrm{Bim}}
\newcommand{\Zvv}{\ZM[v,v^{-1}]}
\newcommand{\amapsuptob}[2]{\xy  (0,-5)*+{#1}="1"; (0,5)*+{#2}="2"; {\ar@{|->} "1";"2"};\endxy}
\newcommand{\auptob}[2]{\xy  (0,-5)*+{#1}="1"; (0,5)*+{#2}="2"; {\ar@{->} "1";"2"};\endxy}
\let\tilde=\widetilde
\let\phi=\varphi
\let\epsilon=\varepsilon
\title[Thick Soergel Calculus]{Thicker Soergel Calculus in type $A$}
\author[Ben Elias]{or: How I learned to stop worrying and calculate some idempotents \\ \\ Ben Elias}
\begin{document} 

\maketitle

\begin{abstract} Let $R$ be the polynomial ring in $n$ variables, acted on by the symmetric group $S_n$. Soergel constructed a full monoidal subcategory of $R$-bimodules which
categorifies the Hecke algebra, whose objects are now known as Soergel bimodules. Soergel bimodules can be described as summands of Bott-Samelson bimodules (attached to sequences of
simple reflections), or as summands of generalized Bott-Samelson bimodules (attached to sequences of parabolic subgroups). A diagrammatic presentation of the category of Bott-Samelson
bimodules was given by the author and Khovanov in previous work. In this paper, we extend it to a presentation of the category of generalized Bott-Samelson bimodules. We also
diagrammatically categorify the representations of the Hecke algebra which are induced from trivial representations of parabolic subgroups.

The main tool is an explicit description of the idempotent which picks out a generalized Bott-Samelson bimodule as a summand inside a Bott-Samelson bimodule. This description uses a
detailed analysis of the reduced expression graph of the longest element of $S_n$, and the semi-orientation on this graph given by the higher Bruhat order of Manin and Schechtman.

This paper relies extensively on color figures. Some references to color may not be meaningful in the printed version, and we refer the reader to the online version which includes the
color figures. \end{abstract}
 
%
%

\setcounter{tocdepth}{1}
\tableofcontents
     
\newpage 

\section{Introduction} 
%

%
\subsection{Overview}
\label{subsec-overview}

The Hecke algebra $\HB$ associated to a Dynkin diagram $\Gamma$ is an algebra of fundamental importance. Its regular representation can be viewed as the decategorification of the
category $\mc{P}$ of $B$-equivariant perverse sheaves on the flag variety, or as the decategorification of the associated category $\mc{O}$, and therefore the Hecke algebra encodes
numerics associated to those categories. In the early 90s Soergel provided an additional categorification of the Hecke algebra, known as the category of Soergel Bimodules $\SBim$, which
is far more accessible than the other two approaches, and can be effectively used to study them both. Soergel bimodules are bimodules over a polynomial ring, and as such, are attractively
simple and explicit.

Let us restrict henceforth to finite type $A$ Dynkin diagrams. In \cite{EKho}, the author in conjunction with Mikhail Khovanov contributed to this explicit-ness by giving a diagrammatic presentation of $\SBim$. More precisely, we gave a diagrammatic presentation of the subcategory $\BSBim \subset \SBim$ of so-called \emph{Bott-Samelson bimodules}.
In this description, every morphism can be viewed as a linear combination of planar graphs with boundary (modulo graphical relations), and composition is given by stacking planar graphs
on top of each other. For a basic introduction to planar diagrammatics for monoidal categories, we recommend \cite[\S 4]{LauSL2}. Planar graphs now provide a simple way to
encode what are potentially very complicated maps of bimodules. (More recently, the author and Williamson \cite{EWGr4sb} have provided a similar diagrammatic calculus for all Coxeter
groups.)

This paper continues the elaboration of categorified Hecke theory on several related fronts: by explicitly finding certain important idempotents in $\BSBim$, by expanding the graphical
calculus of \cite{EKho} to the so-called \emph{generalized Bott-Samelson bimodules} $\fooBim$, and by giving a diagrammatic presentation of a categorification of induced trivial
representations from sub-Dynkin diagrams. Let us define the main players, before we provide some philosophy and motivation.

Let $R$ be the ring of polynomials in $n$ variables, equipped with its action of $W = S_n$. For a subset $J$ of the simple reflections $I$, which we call a \emph{parabolic subset}, we
let $R^J$ denote the subring of $R$ consisting of polynomials invariant under the simple reflections in $J$. Let $B_J$ be the $R$-bimodule $B_J \define R \ot_{R^J} R$, so that tensoring
with $B_J$ is isomorphic to the functor which restricts an $R$-module to $R^J$, and then induces it back to $R$. When $J=\{i\}$ is a singleton, denote the invariant ring $R^i$, and let
$B_i \define R \ot_{R^i} R$. Tensor products of various $B_i$ are known as \emph{Bott-Samelson bimodules}, and form a full monoidal subcategory $\BSBim$ of $R$-bimodules. Similarly,
tensor products of $B_J$ are \emph{generalized Bott-Samelson bimodules}, and form a category $\fooBim$.

\begin{remark} Technically, the ring $R$ is graded, and (generalized) Bott-Samelson bimodules are graded $R$-bimodules which differ from the above by certain grading shifts. We have
ignored the grading in this introduction. \end{remark}

The category $\SBim$ of \emph{Soergel bimodules} is the full (additive monoidal graded) subcategory of $R$-bimodules generated by all direct summands of Bott-Samelson bimodules. Soergel
has proven \cite{Soer07} that the isomorphism classes of indecomposable Soergel bimodules (up to grading shift) are parametrized by $W$; we denote them $\{B_w\}$. The bimodule $B_w$
appears as a summand inside $B_{i_1} \ot \ldots \ot B_{i_{\ell(w)}}$ for any reduced expression $s_{i_1} \cdots s_{i_{\ell(w)}}$ for $w$, and does not appear in any ``shorter"
Bott-Samelson bimodules. Soergel's proof uses a ``support filtration'' and is fairly technical. Using this support filtration, one can show that $B_{w_J} = B_J$, where $w_J$ is the
longest element of the parabolic subgroup generated by $J$ (see \cite{WillSingular}).

Thus $B_J$ will occur as a summand of $B_{i_1} \ot \ldots \ot B_{i_d}$ whenever $s_{i_1} \cdots s_{i_d}$ is a reduced expression for $w_J$. There is no known elementary formula for the
projection to this summand, written solely in terms of polynomials. The main result of this paper will be a diagrammatic construction of this projection.

The Grothendieck ring of $\SBim$ is isomorphic to the Hecke algebra of $S_n$. This isomorphism sends the classes $[B_i]$ and $[B_J]$ to the corresponding Kazhdan-Lusztig basis elements.
For more details, see \cite{Soer90,Soer92,Soer07}.

%
\subsection{Karoubi envelopes and thickening}
\label{subsec-philosophy}

Describing Soergel bimodules in terms of Bott-Samelson bimodules is an example of a process known as taking the \emph{Karoubi envelope} or the \emph{idempotent completion}. Given an
additive category $\CC$, one obtains the Karoubi envelope $\Kar(\CC)$ (roughly) by adding all direct summands as new objects. In this context, a ``direct summand" of an object $M \in
\CC$ is identified by the idempotent $e \in \End(M)$ which projects to it. Equivalently, if one considers $\CC$ as an algebroid, $\Kar(\CC)$ is isomorphic to the category of all
projective right modules over $\CC$. For more background on the Karoubi envelope, see \cite{BarMor}. In an abstract sense, $\CC$ contains all the information necessary to recover
$\Kar(\CC)$, and the two categories are Morita equivalent.

A general philosophy when studying a difficult-to-handle additive category is to study instead an easier, Morita-equivalent subcategory from which the original category can be recovered
via the Karoubi envelope. For example, indecomposable Soergel bimodules are difficult to compute with, and the endomorphism algebra of the sum of all indecomposable Soergel bimodules
currently defies description except in small cases. However, as noted above, the endomorphism algebra of the sum of all Bott-Samelson bimodules does have a useful description. We
mention two other examples of the same phenomenon.

\begin{remark} In geometric examples, this philosophy typically replaces the study of simple perverse sheaves with the study of pushforwards of constant sheaves from resolutions of
singularities. \end{remark}

\begin{example} Khovanov and Lauda \cite{KhoLau09, KhoLau11} use planar diagrams to present the morphisms between certain semisimple perverse sheaves on quiver varieties (via the work of
Varagnolo-Vasserot \cite{VarVas}). Rouquier \cite{Rouq2KM-pp} gives the same presentation, without the use of planar diagrams. The Karoubi envelope of this collection of
semisimple perverse sheaves contains all perverse sheaves, and thereby categorifies the positive half of the quantum group by work of Lusztig. However, the work of
Khovanov-Lauda-Rouquier gives an explicit version of this categorification, and allows for a direct and more general proof of categorification-related results. \end{example}

\begin{example} The category of representations of a complex semisimple lie algebra $\gg$ is semisimple, but still difficult to describe as a monoidal category. However,
in some cases the subcategory of tensor products of fundamental representations does admit a nice description via planar diagrams. For $\sl_2$, this is the Temperley-Lieb algebra,
given its diagrammatic presentation by Kauffman \cite{Kauf}. For rank 2 lie algebras, the corresponding diagrams are the spiders of Kuperberg \cite{Kupe}. Recently in \cite{CKM}, Cautis,
Kamnitzer, and Morrison have extended this presentation to type $A_n$ for $n>2$. \end{example}

The next step is to translate these successes into results about the interesting category $\Kar(\CC)$. To study any particular object in $\Kar(\CC)$, one must be able to realize it as
the image of some idempotent in $\CC$. One is led to the following question: given any indecomposable object in $\Kar(\CC)$, can one find an object $M \in \CC$ and an idempotent $e \in
\End(M)$ giving rise to it? Usually the object $M$ is obvious from the context, but the idempotent is difficult to compute.

\begin{example} In the Temperley-Lieb algebra, the idempotents in question are known as Jones-Wenzl projectors, and one has explicit recursive formulas to find them. For
$\mathfrak{sl}_n$ the idempotents are called clasps, and there are as yet no formulas for $n>3$. \end{example}

\begin{example} In the Khovanov-Lauda-Rouquier categorification of the positive half of quantum $\sl_2$, the idempotents are easy to find using the technology of Frobenius
extensions, see \cite{KhoLau09}. For $\sl_3$ the idempotents were uncovered by the wizardry of Sto\v{s}i\'{c} \cite{StosicSL3}. Beyond that nothing is known. \end{example}

Given a diagrammatic presentation of a category $\CC$, and an idempotent pair $(M,e)$ as above, it is easy to provide a diagrammatic presentation for the category $\CC(M,e)$ obtained by
formally adjoining the image of $e$ to $\CC$. We discuss this procedure in more depth in \S\ref{subsec-thickening}. On the level of morphisms, one adds a projection map $p \co M \to
(M,e)$ and an inclusion map $i \co (M,e) \to M$, with the obvious relations $ip = e$ and $pi = \1_{(M,e)}$. Finding $e$ is sufficient to provide a presentation for $\CC(M,e)$. Iterating
this procedure, one can find a presentation of any \emph{partial idempotent completion} for which one can describe all the idempotents added. Unfortunately, finding idempotents is very
difficult in general.

There may be other interesting consequences and formulas involving these new maps: for example, $(M,e)$ may also be a summand of some other object $M'$, and the inclusion map $(M,e) \to
M'$ may require a computation.

\begin{example} For Lauda's categorification \cite{LauSL2} of the entire quantum $\mathfrak{sl}_2$, a diagrammatic presentation of the Karoubi envelope was given by
Khovanov-Lauda-Mackaay-Sto\v{s}i\'{c} \cite{KLMS}. They refer to their new diagrammatics as a ``thick calculus," because the (images of the) new idempotents are represented by thick
lines. Their calculus also includes a variety of interesting formulas, such as the Sto\v{s}i\'{c} formula. \end{example}

More generally, one can think of replacing the diagrammatics for $\CC$ with the diagrammatics for a partial idempotent completion as \emph{thickening} the calculus; the calculus is not
as thick as possible until one adjoins every indecomposable in $\Kar(\CC)$. One hopes to provide useful formulas to aid computation in a partial idempotent completion, like the
Sto\v{s}i\'{c} formula.

Let us return to our original context, where $\CC = \BSBim$ and $\Kar(\CC) = \SBim$. Given a reduced expression $w = s_{i_1} \cdots s_{i_d}$ one knows that $B_w$ is a summand inside
$B_{i_1} \ot \cdots \ot B_{i_d}$, but finding a formula for this idempotent is an incredibly interesting and extremely difficult problem, for which a complete solution is currently out
of reach. One should expect that the idempotents may become arbitrarily complex for arbitrary $w \in W$, but that they might be computable for certain classes of $w \in W$.

\begin{remark} Since the original writing of this paper, the author and Williamson \cite{EWHodge} have proven the Soergel conjecture, which states that indecomposable Soergel bimodules
descend to the Kazhdan-Lusztig basis, when the category is defined over a field of characteristic zero. However, in finite characteristic the sizes of the indecomposable bimodules will
change, and so will their images in the Grothendieck group. Finding the idempotents explicitly will tell one which primes need to be inverted for the indecomposable bimodule to have its
``generic" size, and can help answer several questions in modular representation theory. As a motivating example, we point the reader to recent work of Williamson
\cite{WillCounter}, who constructs idempotents requiring certain Fibonacci numbers to be invertible, and uses this to disprove the Lusztig conjecture. \end{remark}

In this paper, we compute the idempotents mentioned above for reduced expressions of the longest element $w_J$ in a parabolic subgroup (in type $A$). In fact, given different reduced
expressions for $w_J$, we compute the corresponding map $B_{i_1} \ot \cdots \ot B_{i_d} \to B_{i_1'} \ot \cdots \ot B_{i_d'}$ which projects to the common summand $B_J$. This allows one
to give a concise and reasonably elegant diagrammatic presentation for the partial idempotent completion $\fooBim$, a thickening of the original calculus of \cite{EKho}.

%
\subsection{Manin-Schechtman theory}
\label{subsec-mstheory}

To produce the idempotent corresponding to a parabolic subset $J$, we utilize Manin-Schechtman theory. Manin and Schechtman in \cite{ManSch} provide a beautiful and detailed study of a
certain $n$-category associated with the symmetric group $S_n$. This $n$-category produces a collection of posets known as
the \emph{higher Bruhat orders}, because the most basic such poset is the symmetric group itself with its usual (weak left) Bruhat order.

To describe the next most basic poset, consider the set of all reduced expressions for the longest element $w_0 \in S_n$. This set can be given the structure of a graph by placing an
edge between two reduced expressions if they are related by a single braid relation. Manin and Schechtman equip this graph with a specific \emph{semi-orientation}: edges corresponding
to the commutation $su=us$ of two commuting simple reflections are unoriented, and called \emph{equivalences}, while edges corresponding to the braid relation $sts=tst$ are oriented.
They prove that their directed graph has a unique sink and a unique source up to equivalence. The induced order on equivalence classes of reduced expressions is the first higher Bruhat
order.

For the reader's edification, here is a brief description of the higher Bruhat order. Fix the usual total order on the set $X = \{1, \ldots, n\}$. Let $P_k(X) = \{ I \subset X \textrm{
such that } I \textrm{ has size } k\}$, which inherits a lexicographic order. An element $w \in S_n$ can be interpreted as an order on $P_1(X) \cong X$, and its inversion set can be
thought of as those $I \in P_2(X)$ such that the induced order on $P_1(I)$ is antilexicographic. The edges in the (weak left) Bruhat graph are induced by the following operation: find
$I \in P_2(X)$ such that $P_1(I)$ is an interval in the order, and flip the elements in $P_1(I)$ from lexicographic to antilexicographic. This is the same as multiplying $w$ by a simple
reflection on the left. Meanwhile, a reduced expression of $w$ is an order on its inversion set inside $P_2(X)$, saying in which order the inversions were created by simple reflections.
For example, the reduced expression $sts$ adds the inversions $\{(12), (13), (23)\}$ in lexicographic order, while $tst$ adds them in antilexicographic order. Now, the higher inversion
set of a reduced expression can be thought of as those $I \in P_3(X)$ such that the induced order on $P_2(I)$ is antilexicographic. The higher Bruhat order is induced by the following
operation: find $I \in P_3(X)$ such that $P_2(I)$ is an interval in the order, and flip $P_2(I)$ from lexicographic to antilexicographic. There are many interesting subtleties here
(such as the equivalence relation) which we sweep under the rug. Ultimately, we will not need the details of their beautiful construction in this paper.

Let $S_m \subset S_n$ be the parabolic subgroup which permutes the subset $\{i+1, \ldots, i+m\} \subset \{1, \ldots, n\}$. Then there are numerous embeddings of the reduced expression
graph for the longest element $w_{0,m}$ of $S_m$ into the reduced expression graph for the longest element $w_{0,n}$ of $S_n$. There is one embedding for every way to extend $w_{0,m}$
to $w_{0,n}$ by adding simple reflections on the left and right. For example, the graph for $w_{0,3}$ is a single oriented edge, and every oriented edge in the reduced expression graph
of $w_{0,n}$ comes from some parabolic embedding $S_3 \subset S_n$. The Manin-Schechtman order is \emph{parabolic compatible} in the sense that these embeddings all preserve orientation
(when $\{i+1, \ldots, i+m\}$ is also given its usual total order).

Another property of the Manin-Schechtman order is that it is \emph{monoidal} or \emph{local}. Whenever a reduced expression has two braid relations which can be applied in disjoint
parts of the expression, there is a corresponding square inside the reduced expression graph, which we call a \emph{disjoint square}. The monoidal property states that a disjoint square
has a parallel orientation. In other words, whether a given braid relation is to be applied ``forwards" or ``backwards" is independent of the application of braid relations to distant
parts of the expression. That the Manin-Schechtman order is monoidal is to be expected, given that this order is part of a higher $n$-category.

To any path in a reduced expression graph, we can associate a morphism between the Bott-Samelson bimodules corresponding to the start and end expressions. Equivalences are sent to
isomorphisms. However, oriented edges are not sent to isomorphisms. In this paper we focus on the following two properties of the Manin-Schechtman orientation. \begin{itemize} \item The
orientation is \emph{consistent with Bott-Samelson bimodules}, or \emph{BS-consistent}. For oriented paths, the associated morphism does not depend on the oriented path chosen, only on
the start and end of the path! The same holds for reverse-oriented paths. \item The orientation on the reduced expression graph for $w_0$ is \emph{idempotent-magical}. The morphism
associated to a path which goes in oriented fashion from source to sink and then in reverse-oriented fashion from sink to source is in fact the idempotent projecting to $B_{w_0}$! This
property may fail for other elements $w \in S_n$! \end{itemize}

The proof that the Manin-Schechtman orientation is BS-consistent goes as follows. For $S_4$, BS-consistency is precisely the most interesting relation in the diagrammatic calculus for
Bott-Samelson bimodules \cite{EKho}, the \emph{Zamolodchikov relation}. Meanwhile, it is proven in \cite{ManSch} that the cycles in a reduced expression graph (ignoring cycles which
disappear when the non-oriented edges are contracted) are generated by parabolic embeddings of $S_4$ and by disjoint squares. Thus, BS-consistency follows from being both monoidal and
parabolic compatible. The complete proof (dealing with the contracted cycles as well) is given in \S\ref{subsec-expressions}.

The Manin-Schechtman orientation is not the only BS-consistent orientation. (We restrict our attention to semi-orientations with a unique source and sink modulo equivalence.) For the
longest element of $S_4$, there are two such orientations (and their reversals). For $S_5$, there are four. Most orientations will not be BS-consistent. As noted in the
previous paragraph, whether an orientation is BS-consistent or not is a combinatorial question, pertaining to the disjoint squares and parabolic embeddings of $S_4$ inside the graph.
The combinatorics are rather interesting, and deserve further study.

The idempotent-magical property is very special and quite surprising. This property is also shared by the other BS-consistent orientation for $S_4$, and it is unknown
whether or not it holds for the other BS-consistent orientations in general. There is currently no understanding for what makes an idempotent-magical orientation special, or why any
orientation should have this property to begin with.

The proof that the Manin-Schechtman orientation is idempotent-magical comprises the bulk of this paper, and it is our main technical result. As far as the author is aware, this is the
first genuine application of Manin-Schechtman's orientation (and not just the rough structure of the graph) to representation theory. We use a concrete description of certain oriented
paths, and perform very explicit computations to verify the result. The Manin-Schechtman orientation is rather convenient for this. More combinatorial work would be required to prove
the result (in the same fashion) for other orientations, though this is theoretically possible.

\begin{remark} There is a Manin-Schechtman orientation on the reduced expression graphs of arbitrary elements of $S_n$, not just the longest element. This orientation is always
BS-consistent, by the argument above, but is rarely idempotent-magical. For example, there are elements which have only a single equivalence class of reduced expressions, but whose
Bott-Samelson bimodules are not indecomposable. \end{remark}

A similar situation seems to occur in type $B$: I conjecture that one can place a (natural) orientation on the reduced expression graph of the longest element which is
parabolic-compatible, monoidal, BS-consistent, and idempotent-magical. This has led the author to conjecture the existence of higher Bruhat orders associated to type $B$ and possibly to
other wreath products, and to encourage this study more seriously. Some early work in this direction has recently appeared in \cite{SheSuh}.

However, the theory appears to break down beyond these cases. A computational result from \cite{EWGr4sb} shows that the reduced expression graph of the longest element of $H_3$ does not
admit a BS-consistent orientation with a unique source and sink. An unpublished result of the author (essentially, an extremely long exercise) shows that the reduced expression graph of
the longest element of $D_4$ also does not admit a BS-consistent, monoidal orientation! The existence of BS-consistent and idempotent-magical orientations should imply something
about the geometry of the flag variety, though what this says about the flag variety in types $A$ and $B$ which fails in type $D$ is a complete mystery. Much more study is required.

Nicolas Libedinsky independently studied a similar question in \cite{LibNB}, where he looks at morphisms induced by paths in the expression graph for an arbitrary element $w \in W$, but
in the context not of the symmetric group but of \emph{extra large} Coxeter groups ($m_{s,t}>3$ for any simple reflections $s,t$). For these Coxeter groups the expression graph is quite
simple. Libedinsky shows that the morphism corresponding to any path which hits every reduced expression is an idempotent, and that these idempotents pick out a single isomorphism class
of direct summand. However, this summand is typically not indecomposable.

%
\subsection{Singular Soergel bimodules and induced Hecke modules}
\label{subsec-inducedandssbim}

The entire story of Soergel bimodules should be viewed in the larger context of singular Soergel bimodules, as introduced by Geordie Williamson in his thesis \cite{WillSingular}.
Singular Soergel bimodules form a 2-category $\mmB$: the objects are rings $R^J$ for each parabolic subset $J \subset \Gamma$, and the Hom categories are some full subcategories of
$(R^J,R^K)$-bimodules. Soergel bimodules form the endomorphism category of the object $R=R^{\emptyset}$ inside the 2-category of singular Soergel bimodules. The Grothendieck category of
$\mmB$ is isomorphic to the \emph{Hecke algebroid}, an ``idempotented" version of the Hecke algebra. Note that the objects of the Hecke algebroid are also parametrized by parabolic
subsets. These results are due to Williamson \cite{WillSingular}, and more details can be found there.

The \emph{trivial module} of the Hecke algebra is a particular one-dimensional representation. Let $T_J$ denote the induction to $\HB$ of the trivial module of the sub-Hecke-algebra
$\HB_J$ attached to a parabolic subgroup. It is not difficult to show that $T_J$ is isomorphic to $\Hom(J,\emptyset)$ inside the Hecke algebroid, viewed as a module over
$\Hom(\emptyset, \emptyset) = \HB$. Thus, it is categorified by the singular Soergel bimodule category $\Hom_{\mmB}(R^J,R)$.

This paper provides a diagrammatic presentation of $\JBim$, the full subcategory of $J$-\emph{singular Bott-Samelson bimodules}, which has $\Hom_{\mmB}(R^J,R)$ as its idempotent
completion. A $J$-singular Bott-Samelson bimodule is just a usual Bott-Samelson $R$-bimodule with scalars restricted to $R^J$ on the right. There is a (non-full) faithful functor from
$\Hom(R^J,R)$ to $\Hom(R,R)=\SBim$ given by inducing on the right from $R^J$ back up to $R$. Using the idempotent which picks out $B_J$, we describe the image of this functor
diagrammatically, which allows us to prove that our presentation of $\JBim$ is correct. However, the diagrammatic category for $\JBim$ is itself quite simple, and can be enjoyed in the
absence of any knowledge of the idempotent or its properties, or any knowledge of singular Soergel bimodules.

Finding a diagrammatic presentation of (a Morita-equivalent sub-2-category of) the entire 2-category of singular Soergel bimodules is a more difficult question, and is
work in progress between the author and Williamson. The functor between the description given here of the small fragment $\JBim$ and the description in progress of the entire 2-category
is straightforward.

Having a categorification of induced trivial modules is a first step towards the categorification of Hecke representation theory in general. Much about the representation theory of the
Hecke algebra can be understood using induced trivial and sign modules, and the Hom spaces between them. It is one of the author's goals to categorify this whole picture.

One should also point out that induced modules have been categorified before in much more generality, in the context of category $\mc{O}$, by Stroppel and Mazorchuk \cite{MazStr}. It is
very likely that the category of this paper should be related to their categorification of the induced trivial module by applying Soergel's functor.

\vspace{0.06in}

{\bf Structure of the paper.}

Chapter \ref{sec-background} contains background material. In \S\ref{subsec-hecke} we discuss the Hecke algebra. In \S\ref{subsec-soergel} we elaborate on the
Soergel-Williamson categorification, which was partially described in the introduction. In \S\ref{subsec-soergeldiagrammatics} we recall the diagrammatic category defined in
\cite{EKho}, which is equivalent to the category of Bott-Samelson bimodules. Finally, in \S\ref{subsec-thickening} we discuss in detail how partial idempotent completions work, and
outline what needs to be computed to thicken our calculus.

In Chapter \ref{sec-calcs} we perform these computations, and find the idempotent for $B_J$. In \S\ref{subsec-expressions} we discuss reduced expression graphs and the
Manin-Schechtman semi-orientation, and prove that this orientation is BS-consistent. In \S\ref{subsec-proofoutline} we outline the proof that the orientation is idempotent-magical, and
prove it modulo some technical lemmas. The rest of the chapter is a series of awful, terrible computations. In \S\ref{subsec-longest} we begin the process of proving these
technical lemmas, setting notation in place for certain reduced expressions and paths in the reduced expression graph. In \S\ref{subsec-thicktrivalent} we describe the extremely
important morphism which will become the ``thick trivalent vertex.'' In \S\ref{subsec-nastiness} we prove the technical lemmas, and in \S\ref{subsec-projectors} we finish the proof that
the idempotent we construct has the desired properties.

In Chapter \ref{sec-augmented} we use the results of \S\ref{sec-calcs} to provide diagrammatics for $\fooBim$. It should be accessible without reading \S\ref{sec-calcs}, which the
reader uninterested in the specifics is welcome to skip entirely. We augment the diagrammatics by adding new diagrams for interesting morphisms in $\fooBim$, such as the thick trivalent
vertices, and discuss some relations involving these new diagrams.

After all this work, it is quite easy in Chapter \ref{sec-induced} to provide diagrammatics for $\JBim$, the categorification of the induced trivial module $T_J$.

The appendix contains a list of notation.

\vspace{0.06in}

{\bf Acknowledgments.}

The author was supported by NSF grants DMS-524460 and DMS-524124 and DMS-1103862, and would like to thank Mikhail Khovanov for his suggestions, and Geordie Williamson for many useful discussions. The author also thanks the anonymous referee for many valuable comments, and for pointing out the important missing case \eqref{a6z}.

\section{Background}
\label{sec-background}

We expect the reader to be familiar with \cite{EKho}. It will be enough to have glanced through Chapters 2 and 3 of that paper; aside from that, we use only once the concept of
one-color reduction, found in Chapter 4. Alternatively (perhaps preferably, thanks to the advent of color) one should read \cite{ETemperley} chapters 2.1, 2.3 and 2.4. We change the
notation for the names of the categories, change the convention for grading shifts, and change the variable $t$ to $v$ in order to be consistent with more recent papers and the papers
of Soergel (wherein, for those who understand, $t=q^{\frac{1}{2}}$ and $v=q^{-\frac{1}{2}}$; in \cite{EKho} we used $t$ for $q^{-\frac{1}{2}}$ in some preliminary versions).

We will not discuss singular Soergel bimodules in this paper. Nonetheless, this additional context may be useful to the reader: we recommend the first three pages of the introduction to
\cite{WillSingular}. We change notation in that we use the letter $b$ to denote Kazhdan-Lusztig elements, instead of $\underline{H}$.

The results below which are attributed to Soergel may be found in \cite{Soer07}. Some of these results appeared in \cite{Soer90, Soer92} first, but \cite{Soer07} has the advantage of
being purely algebraic. The results below which are attributed to Williamson may be found in \cite{WillSingular}. More information on any of the topics in the first two sections can be
found in those papers.

Let us fix some notation pertaining to the symmetric group. Additional notation will be introduced in the various sections of this chapter. We have provided a list of notations in the
appendix.

Let $I=\{1,\ldots,n\}$ index the vertices of the Dynkin diagram $A_n$. Elements of $I$ will be called \emph{indices} or \emph{colors}. The terms \emph{distant} and \emph{adjacent} refer
to the relative position of two indices in the Dynkin diagram, not in any word or picture. Let $W=S_{n+1}$ be the Coxeter group, with simple reflections $s_i$, $i \in I$. Let $w_0$
denote the longest element. For a \emph{parabolic subset} $J \subset I$, let $W_J$ denote its parabolic subgroup. We let $w_J$ be the longest element of $W_J$, and $d_J$ its length. We
say that an index $i$ is \emph{distant} from a parabolic subset $J$ if it is distant from all the indices in $J$, and we say two parabolic subsets are \emph{distant} if all the indices
in one are distant from the indices in the other.

We let $\qJ$ denote the Hilbert polynomial of $W_J$, defined by $v^{-d_J}\sum_{w \in W_J}v^{2l(w)}$. It was denoted $\pi(J)$ in \cite{WillSingular}. This Hilbert polynomial is a product
of quantum numbers. For instance, the Hilbert polynomial of $S_n$ is $\left[n\right]!$, quantum $n$ factorial. Recall that $[n] = v^{-n+1} + v^{-n+3} + \ldots + v^{n-3} + v^{n-1}$, and
that $[n]! = [n][n-1]\cdots[1]$.

%
\subsection{The Hecke algebra}
\label{subsec-hecke}

The Hecke algebra $\HB$ has a presentation as an algebra over $\Zvv$ with generators $b_i$, $i \in I$ and the \emph{Hecke relations}
\begin{eqnarray}
b_i^2 & = & (v + v^{-1}) b_i  \label{bisq}\\
b_ib_j & = & b_jb_i \ \textrm{ for distant } i, j \label{bidistantbj}\\
b_ib_jb_i + b_j & = & b_jb_ib_j + b_i \textrm{ for adjacent } i, j \label{biadjacentbj}.
\end{eqnarray}
The subalgebra $\HB_J$ is generated by $b_j$, $j \in J$. It can also be described by generators and relations in the same way. Note that $\HB$ is free over $\Zvv$.

There is a basis for $\HB$ known as the \emph{Kazhdan-Lusztig basis}, with one element $b_w$ for each $w \in W$. We will not write down this basis explicitly in terms of the generators,
nor is it easy to do so. However, we note that when $w=w_J$ is the longest element of a parabolic subgroup, the corresponding basis element $b_J \define b_{w_J}$ does have a simple
presentation in another basis known as the \emph{standard basis}. When $J=\{i\}$ is a singleton, $b_J=b_i$. When $J$ is empty, $b_\emptyset=1$.

We will not need any features of the standard basis in this paper, beyond the implications for the Kazhdan-Lusztig basis which we mention in this section.

We also note that $b_J$ is contained inside the subalgebra $\HB_J$. This immediately implies two generalizations of \eqref{bidistantbj}.
\begin{equation} b_i b_J = b_J b_i \textrm{ for any } i \textrm{ distant from } J \label{bJcommutebi}. \end{equation}
\begin{equation} b_J b_K = b_K b_J \textrm{ for distant } J,K \label{bJcommutebK}. \end{equation}
Working with standard bases, it is also very easy to prove the following, which explains \eqref{bJcommutebK}.
\begin{equation} b_J b_K = b_{J \coprod K}  \textrm{ for distant } J,K \label{bJdistantbK}. \end{equation}

The Hecke algebra has a \emph{trivial representation} $T$, a left $\HB$-module. It is free of rank 1 as a $\Zvv$-module, and $b_i$ acts by multiplication by $[2] = v+v^{-1}$. The
element $b_J$ acts on $T$ by multiplication by $[J]$, a fact which is obvious when the Hecke algebra is described in terms of the standard basis, see \cite[Lemma 2.2.3]{WillSingular}.
Meanwhile, one knows (see \cite[Theorem 6.6]{LuszUnequal14}) that $b_i b_w = (v+v^{-1}) b_w$ whenever $w$ has $s_i$ in its left descent set. This implies that the $\Zvv$-span of $b_K$
is a realization of the trivial representation of the subalgebra $\HB_J$, whenever $J \subset K$. From this we have the following generalizations of \eqref{bisq}.
\begin{equation} b_i b_J = b_J b_i = (v + v^{-1}) b_J \textrm{ for any } i\in J \label{bJproperty} \end{equation}
\begin{equation} b_J b_K = b_K b_J = \qJ b_K \textrm{ whenever } J \subset K \label{bJbK} \end{equation}

Conversely, any $x \in \HB$ for which $b_j x = [2] x$ for all $j \in J$ also satisfies $b_J x = \qJ x$, and thus must live in the right ideal of $b_J$ (at least up to scalar). If $x$ is
in $\HB_J$ as well, then $x$ is a multiple of $b_J$.

The trivial representation of $\HB$ itself is realized inside $\HB$ as the span of $b_{w_0}$, which is also an ideal. A less obvious fact is that the left ideal of $b_J$ is a
realization of the induction $T_J \define \Ind_{\HB_J}^\HB T$ from the trivial module of $\HB_J$. This also becomes more obvious in the standard basis, see \cite{WillSingular}.

%

%
\subsection{The Soergel-Williamson categorification}
\label{subsec-soergel}
%

Let $R=\Bbbk[f_1,\ldots,f_n]$ be the coordinate ring of the geometric representation of $W$, where $f_i$ are the simple roots and $\Bbbk$ is a field of characteristic $\ne 2$. The ring
$R$ is graded with $\deg(f_i)=2$. If $M = \oplus M^i$ is a graded $R$-module then the grading shift convention will be $M(i)^j = M^{i+j}$. All $R$-modules in this paper will be graded.

In order to categorify the Hecke algebra within the category of $R$-bimodules, we may wish to find $R$-bimodules $B_i$, $i \in I$, which satisfy

\begin{eqnarray}
B_i \otimes B_i & \cong & B_i(1) \oplus B_i(-1) \label{Bisq} \\
B_i \otimes B_j & \cong & B_j \otimes B_i \label{BidistantBj} \textrm{ for distant } i, j \\
B_i \otimes B_j \otimes B_i \oplus B_j & \cong & B_j \otimes B_i \otimes B_j \oplus B_i \label{BiadjacentBj} \textrm{ for adjacent } i, j. \end{eqnarray}

\noindent One could then define a map from $\HB$ to the split Grothendieck ring of $R$-bimodules by sending $b_i$ to $[B_i]$. In order to categorify various aspects of the Hecke
algebroid, one may also seek $R$-bimodules $B_J$ for each parabolic set $J$, which satisfy

\begin{equation} B_J \ot B_i \cong B_i \ot B_J \cong B_J(1) \oplus B_J(-1) \textrm{ whenever } i \in J \label{BJBi}.\end{equation}
\begin{equation} B_J \ot B_K \cong B_K \ot B_J \cong \qJ B_K \textrm{ whenever } J \subset K \label{BJBKinside} \end{equation}
\begin{equation} B_J \ot B_K \cong B_K \ot B_J \cong B_{J \coprod K} \textrm{ for distant } J,K \label{BJdistantBK}.\end{equation}

\noindent Here, we use the shorthand that $\qJ B_K$ indicates a direct sum of many copies of $B_K$ with the appropriate degree shifts. The isomorphism (\ref{BJBi}) could be rewritten
$B_J \ot B_i \cong \qtwo B_J$, for instance. The map from $\HB$ to the Grothendieck ring should send $b_J$ to $[B_J]$.

In fact, these modules exist, and we described them briefly in the introduction. Let us recall the construction again. For a more thorough study of this material, see Williamson's
thesis \cite{WillSingular}.

Consider the 2-category (usually called $\Bim$ in the literature) where the objects are graded rings, and the morphism category between two rings $\Hom(S',S)$ is the category of graded
$(S,S')$-bimodules. Composition of 1-morphisms is given by the tensor product of bimodules. When we write $\HOM(X,Y)$ for $X,Y$ two objects in a graded category, we refer to the graded
vector space which is $\oplus_{n \in Z} \Hom(X,Y(n))$. Note that when $X,Y$ are graded $(S,S')$-bimodules for two commutative rings $S,S'$, the graded vector space $\HOM(X,Y)$ is also a
graded $(S,S')$-bimodule. We will now describe several 2-subcategories of $\Bim$.

The ring $R$ has a natural action of $W$, and so for each parabolic subset $J$ we have a subring $R^J \define R^{W_J}$ of invariants. When $J=\{i\}$ is a single element subset, we
abbreviate the invariant ring as $R^i$. Tensoring on the left with $R$, viewed as an $(R^J,R)$-bimodule, is the same as the restriction functor from $R$-modules to $R^J$-modules.
Tensoring on the left with $R$, viewed as an $(R, R^J)$-bimodule, is the same as the induction functor from $R^J$-modules to $R$-modules.


Let $B_i \define R \ot_{R^i} R(1)$, which restricts from $R$ down to $R^i$, and then induces back to $R$. For a sequence $\ii = i_1 i_2 \ldots i_{d(\ii)}$ of indices of length $d(\ii)$,
let $B_{\ii} \define B_{i_1} \ot_R \cdots \ot_R B_{i_{d(\ii)}}$. These $B_{\ii}$ are called \emph{Bott-Samelson bimodules}, and together with their grading shifts, form a full monoidal
graded subcategory $\BSBim$ of $\Hom_{\Bim}(R,R)$. Let us mention now that we will write $s_{\ii}$ for the product $s_{i_1} \cdots s_{i_{d(\ii)}}$, an element of $W$.

Let $B_J \define R \ot_{R^J} R(d_J)$, which restricts from $R$ down to $R^J$, and then induces back to $R$. It is clear that the objects $B_J$ are indecomposable as $R$-bimodules, being
generated by their unique term $1 \ot 1$ in minimal degree. For a sequence $\JJ = J_1 J_2 \ldots J_{d(\ii)}$ of parabolic subsets, we let $B_{\JJ} \define B_{J_1} \ot_R \cdots \ot_R
B_{J_{d(\JJ)}}$. These $B_{\JJ}$ are called \emph{generalized Bott-Samelson bimodules}, and together with their grading shifts, also form a full monoidal graded subcategory $\fooBim$ of
$\Hom_{\Bim}(R,R)$.


The category of \emph{Soergel bimodules} $\SBim$ is obtained by taking all direct sums and summands of objects in $\BSBim$. The objects of $\fooBim$ are, in fact, Soergel bimodules. Since Soergel bimodules form an idempotent-closed subcategory of a bimodule category, it has the Krull-Schmidt property and its Grothendieck group will be generated freely by
indecomposables. Note that none of the categories mentioned are abelian; \emph{Grothendieck group} always refers to the additive or split Grothendieck group.

%

It is not hard to see that the bimodules $B_i$ and $B_J$ satisfy the isomorphisms \eqref{Bisq} through \eqref{BJdistantBK} above, and we can even be explicit. The isomorphism
\eqref{Bisq} can be deduced from the fact that, as an $R^i$-bimodule, $R \cong R^i \oplus R^i(-2)$. This isomorphism is given explicitly using the \emph{Demazure operator} $\partial_i
\colon R \to R^i$, where $\partial_i(P)=\frac{P - s_i(P)}{f_i}$. This operator is $R^i$-linear, has degree -2, and sends $f_i \mapsto 2$ and $R^i \subset R$ to zero. The two projection
operators from $R$ to $R^i$ are $P \mapsto \partial_i(P)$ and $P \mapsto \partial_i(Pf_i)$, of degrees -2 and 0 respectively; the inclusion operators are $Q \mapsto \frac{1}{2}f_i Q$
and $Q \mapsto \frac{1}{2}Q$, of degrees +2 and 0 respectively. We invite the reader to figure out the projections and inclusions for \eqref{Bisq}, or to look them up in \cite{EKho}. The
Demazure operator also provides an easy way to define the projection and inclusion operators for the splitting in \eqref{BJBi}.

In similar fashion, the isomorphism \eqref{BJBKinside} comes from the fact that, as an $R^J$-bimodule, $R \cong \qJ R^J$ up to an overall grading shift. The isomorphism is given
explicitly by choosing dual bases for $R$ over $R^J$ with respect to the Demazure operator $\pa_{w_J}$. More details will be found later, in the discussion before
\eqref{thickthickidemp}.

The isomorphism \eqref{BiadjacentBj} was studied closely and made explicit in \cite{EKho}.

Consider the isomorphism of \eqref{BJdistantBK}, which is \[R \ot_{R^J} R \ot_{R^K} R (d_J+d_K) = B_J \ot B_K \cong B_{J \coprod K} = R \ot_{R^{J \coprod K}} R (d_J+d_K)\] where
$J,K$ are distant. Note that any polynomial in $R$ can be decomposed into polynomials symmetric in either $W_J$ or $W_K$, so that the left side is spanned by elements $f \ot 1 \ot g$
for $f,g \in R$. The isomorphism sends $f \ot 1 \ot g \mapsto f \ot g$.

The following theorem is due to Soergel.

\begin{thm} (See \cite[Proposition 5.7]{Soer07} or \cite[Theorem 4.1.5]{WillSingular}) The Grothendieck ring of $\SBim$ is isomorphic to $\HB$, where the isomorphism is defined by sending
$b_i$ to $[B_i]$ and $v$ to $[R(1)]$. Under this isomorphism, $b_J$ is sent to $[B_J]$. Any Soergel bimodule is free as a left (resp. right) $R$-module. \end{thm}


%

The Hecke algebra is equipped with a canonical $\Zvv$-linear \emph{trace} map $\epsilon \colon \HB \to \Zvv$, which picks out the coefficient of $1$ in the standard basis of $\HB$. It
satisfies $\epsilon(b_{\ii})=v^{d(\ii)}$ when $\ii$ has no repeated indices, and $\epsilon(b_J)=v^{d_J}$. The Hecke algebra also has an antilinear
antiinvolution $\omega$, satisfying $\omega(v^a b_{\ii})=v^{-a}b_{\omega(\ii)}$ where $\omega(\ii)$ is the sequence run in reverse. Together, these induce a pairing on $\HB$ via
$(x,y)=\epsilon(y\omega(x))$.

\begin{prop} (See \cite{Soer07} Theorem 5.15) This canonical pairing is induced by the categorification $\SBim$, in that the graded rank of $\HOM_{\SBim}(X,Y)$ as
a free left (or right) $R$-module is precisely $([X],[Y])$.  \end{prop}


Putting together the previous theorems, we can state the following useful lemma, which tells us when an idempotent in $\End(B_{\ii})$ will pick out $B_J$.

\begin{lemma} Let $X$ be a summand of $B_{\ii}$ for some reduced expression $\ii$ of $w_J$. Suppose that $X$ satisfies $X \ot B_i \cong X(1) \oplus X(-1)$ for all $i \in J$. Then the
class of $X$ in $\HB$ is a scalar multiple of $b_J$, and this scalar is the graded rank of the free $R$-module $\HOM(R,X)$, divided by $v^{d_J}$. In particular, if $\HOM(R,X)$ and
$\HOM(R,B_J)$ have the same graded rank, then $X \cong B_J$. \label{isitBJ} \end{lemma}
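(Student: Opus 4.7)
The plan is to translate the hypothesis into the Hecke algebra via Soergel's isomorphism, invoke the characterization of $b_J$ already stated in Subsection \ref{subsec-hecke}, and then extract the scalar using the trace form and the Soergel--Williamson rank pairing.

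First, since Soergel's theorem gives an isomorphism of Grothendieck rings sending $[B_i] \mapsto b_i$ and $\{1\} \mapsto v$, the class $[B_{\ii}]$ is a product of $b_i$ with $i \in J$ and thus lies in the parabolic subalgebra $\mH_J$. Because $X$ is a direct summand of $B_{\ii}$, $[X]$ is a $\Zvv$-linear combination of classes of summands of $B_{\ii}$, all of which also lie in $\mH_J$; in particular $[X] \in \mH_J$.

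Next, the assumed isomorphism $X \ot B_i \cong X\{1\} \oplus X\{-1\}$ descends to the identity
\[
[X]\, b_i \;=\; (v + v^{-1})\,[X] \;=\; \qtwo\,[X]
\]
in $\mH$, for every $i \in J$. Now, the characterization of $b_J$ recalled just after (\ref{bJcommutebK}) says that $b_J$ is (up to scalar) the unique element of $\mH_J$ on which every $b_i$ with $i \in J$ acts by $\qtwo$; that discussion is phrased for left multiplication, but the same argument (or a direct appeal to the $\Zvv$-linear antiautomorphism of $\mH$ fixing each $b_i$, which fixes $b_J$) yields the analogous right-sided statement. Combining this with the previous two paragraphs forces
\[
[X] \;=\; c\,b_J
\]
for a unique scalar $c \in \Zvv$.

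Finally, to pin down $c$, I use the pairing proposition: the graded rank of $\HOM(R,X)$ as a free left $R$-module equals $([R],[X]) = \epsilon\bigl([X]\,\omega([R])\bigr) = \epsilon([X])$, since $[R]=1$. Expanding via $[X]=c\,b_J$ and using $\epsilon(b_J)=v^{d_J}$, we get
\[
\grdrk_R \HOM(R,X) \;=\; c\,\epsilon(b_J) \;=\; c\,v^{d_J},
\]
so $c = \grdrk_R \HOM(R,X)\,/\,v^{d_J}$, as claimed.

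The only subtle point I anticipate is the passage from the left-sided characterization of $b_J$ quoted in Subsection \ref{subsec-hecke} to the right-sided version needed here; this is the ``main obstacle,'' but it is resolved either by running the same standard-basis argument on the right or by applying the $\Zvv$-linear antiautomorphism of $\mH$ fixing the generators $b_i$. Everything else is an immediate application of the categorification theorem and the rank-pairing formula already stated.
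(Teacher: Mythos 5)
Your proof is correct and follows essentially the same route as the paper: deduce $[X]\in\mH_J$ and $[X]b_i=\qtwo[X]$ from the hypotheses, invoke the characterization of $b_J$ from Section \ref{subsec-hecke}, and then compute the scalar from $\epsilon([X])=\grdrk_R\HOM(R,X)$ together with $\epsilon(b_J)=v^{d_J}$. The left/right issue you flag is real but harmless, exactly as you resolve it (the paper explicitly leaves such right-handed analogues to the reader).
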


\begin{proof} Whatever $[X]$ is, it is contained in $\HB_J$ and satisfies $[X]b_i=(v + v^{-1}) [X]$, so $[X]$ must actually be a scalar multiple of $b_J$ (as
discussed in \S\ref{subsec-hecke}). Since $\HOM(R,X)$ has graded rank $\epsilon([X])$ and $\epsilon(b_J)=v^{d_J}$, we can determine the scalar from this graded rank by dividing by
$v^{d_J}$. If $[X]=b_J$ then by computing the graded rank of $\END(X)$ we know that $X$ is indecomposable, and it must therefore be isomorphic to $B_J$. \end{proof}

There is one more category to describe, in order to categorify the induced trivial module $T_J$ in a combinatorial way. Let $\JBim$, the category of \emph{$J$-singular Bott-Samelson
bimodules}, denote the full subcategory of $(R,R^J)$-bimodules given by objects $\Res B_{\ii}$, where the Bott-Samelson bimodule $B_{\ii}$ is restricted to become an $R^J$-module on the
right. These are special examples of singular Soergel bimodules, as described in the introduction. Williamson has generalized the results of Soergel above to singular Soergel bimodules
\cite{WillSingular}. To avoid needing to discuss singular Soergel bimodules in this paper, we only sketch a proof the following claim.

\begin{claim} The idempotent completion of $\JBim$ is the category of singular Soergel bimodules inside $(R,R^J)$-bimodules. In particular, $\JBim$ categorifies $T_J$.
\label{idempotentcompletionofsingularBS} \end{claim}

\begin{proof} Using the classification theorem of indecomposable singular Soergel bimodules (see \cite[Theorem 5.4.2]{WillSingular}) we know there is one indecomposable for each coset
of $W_J$ in $W$. Let $s_{\ii}$ be a reduced expression for the minimal element $w$ of that coset, and consider the restriction of $B_{\ii}$. Using the support filtration (see
\cite{WillSingular}), it is clear that this restriction has the indecomposable corresponding to that coset appearing as a summand with multiplicity 1. (Thanks to Williamson for this
quick proof.)

That this is a categorification of $T_J$ comes from \cite[Theorem 4.1.5]{WillSingular}. \end{proof}

The remaining facts about $\JBim$ that we will use can be quoted directly from \cite[Theorems 4.1.5 and 5.2.2]{WillSingular}. 

\begin{claim} Morphism spaces between two objects in $\JBim$ are free as left $R$-modules and right $R^J$-modules. There is a formula for the graded ranks of $\HOM$ spaces in $\JBim$, analogous to the Proposition above for Soergel bimodules.

The functor which induces from $R^J$ to $R$ on the right is fully faithful after base change. That is, for $X, Y \in \JBim$ we have \[\HOM_{(R,R)}(X \ot_{R^J} R, Y \ot_{R^J} R) \cong \HOM_{(R,R^J)}(X,Y) \ot_{R^J} R. \]  \end{claim}

We will provide diagrammatics for the category $\JBim$ in Chapter \ref{sec-induced}.

%
\subsection{Soergel diagrammatics}
\label{subsec-soergeldiagrammatics}
%
%
%

In \cite{EKho}, the author and M. Khovanov give a diagrammatic presentation of a category $\DC$ by generators and relations. A functor $\FC \co \DC \to \BSBim$ was constructed, and it was shown that $\FC$ is an equivalence when the base field $\Bbbk$ is of characteristic not equal to $2$.

Technically, morphisms in $\DC$ are graded vector spaces, which describe the space $\HOM(X,Y)$ between two Bott-Samelson bimodules. We write Hom spaces in $\DC$ as $\HOM_{\DC}(X,Y)$
accordingly. Obtaining a graded category, whose morphisms all have degree $0$, is a trivial process. We will also ignore the difference between $\DC$ and its additive closure, speaking
freely about direct sums of objects in $\DC$.

What follows is a brief summary of sections 2.3 and 2.4 of \cite{ETemperley}. If this is the reader's first encounter with Soergel diagrammatics, we recommend reading those sections instead, as a
better introduction. We will assume that many of the ideas found there are known to the reader, including biadjointness, dot forcing rules, idempotent decompositions, and one-color
reductions. One can also see \cite{EKho} for a less pretty version, but a version where the equivalence $\FC$ between $\DC$ and $\BSBim$ is explicitly defined.

An object in the diagrammatic version of $\BSBim$ is given by a sequence of indices $\ii$, which is visualized as $d$ points on the real line $\RM$, labelled or ``colored'' by the
indices in order from left to right. Morphisms from $\ii$ to $\jj$ are planar graphs in $\RM \times [0,1]$, with each edge colored by an index,
with bottom boundary $\ii$ and top boundary $\jj$. The allowed vertices are univalent vertices (degree +1), trivalent vertices joining 3 edges of the same color (degree -1), 4-valent
vertices joining edges of alternating distant colors (degree 0), and 6-valent vertices joining edges of alternating adjacent colors (degree 0).

We will occasionally use a shorthand to represent double dots (two univalent vertices connected by an edge). We identify a double dot colored $i$ with the polynomial $f_i\in R$, and to
a linear combination of disjoint unions of double dots in the same region of a graph, we associate the appropriate linear combination of products of $f_i$. For any polynomial $f\in R$,
a square box with a polynomial $f$ in a region will represent the corresponding linear combination of graphs with double dots. We have a bimodule action of $R$ on morphisms by placing
boxes (i.e. double dots) in the leftmost or rightmost regions of a graph. The functor $\FC$ respects this $R$-bimodule action.

\plabel{$f_i^2f_j$}{48}{12}
For instance, $\ig{1.5}{polyexample}$.

In the following relations, blue represents a generic index.

\begin{equation} \label{assoc1} \ig{1}{assoc1} \end{equation}	
\begin{equation} \label{unit} \ig{1}{unit} \end{equation}
\begin{equation} \label{needle} \ig{1}{needle} \end{equation}
\begin{equation} \label{dotslidesame} \ig{1}{dotslidesame} \end{equation}

We have the following implication of (\ref{dotslidesame}):

\begin{equation} \label{iidecomp} \ig{1}{iidecomp}. \end{equation}

In the following relations, the two colors are distant.

\begin{equation} \label{R2} \ig{.8}{R2} \end{equation}
\begin{equation} \label{distslidedot} \ig{1}{distslidedot} \end{equation}
\begin{equation} \label{distslide3} \ig{.8}{distslide3} \end{equation}
\begin{equation} \label{dotslidefar} \ig{.8}{dotslidefar} \end{equation}

In this relation, two colors are adjacent, and both distant to the third color.

\begin{equation} \label{distslide6} \ig{.8}{distslide6} \end{equation}

In this relation, all three colors are mutually distant.

\begin{equation} \label{distslide4} \ig{1}{distslide4} \end{equation}

\begin{remark} Relations (\ref{R2}) thru (\ref{distslide4}) indicate that any part of the graph colored $i$ and any part of the graph colored $j$ ``do not interact'' for $i$ and $j$
distant. That is, one may visualize sliding the $j$-colored part past the $i$-colored part, and it will not change the morphism. We call this the \emph{distant sliding property}.
\end{remark}

In the following relations, the two colors are adjacent.

\begin{equation} \label{dot6} \ig{.8}{dot6} \end{equation}
\begin{equation} \label{ipidecomp} \ig{.8}{ipidecomp} \end{equation}
\begin{equation} \label{assoc2} \ig{1}{assoc2} \end{equation}
\begin{equation} \label{dotslidenear} \ig{1}{dotslidenear} \end{equation}

We have the following implication of the above:

\begin{equation} \label{doubleasspartial} \ig{1.2}{doubleasspartial} \end{equation}

In this final relation, the colors have the same adjacency as $\{1,2,3\}$.

\begin{equation} \label{assoc3} \ig{1}{assoc3} \end{equation}

\begin{thm}[Main Theorem of \cite{EKho}] There is a functor $\FC$ from $\DC$ to $\BSBim$, which is an equivalence of categories. Thus, the indecomposable objects in $\Kar(\DC)$ are parametrized by $w \in W$. \end{thm}

We will abusively call these indecomposable objects $B_w$, even though that is their image under the functor $\FC$, because denoting them $w$ would be even more confusing notation.

You can start paying attention again NOW.

We do not recall the definition of $\FC$ because it is largely irrelevant. Let $1 \ot 1 \ot \ldots \ot 1 \in B_{\ii}$ be called a \emph{1-tensor}. It will be significant that the
6-valent vertex and the 4-valent vertex both send 1-tensors to 1-tensors. So does the dot, positioned so that it represents a map from $B_i \to R$, and the trivalent vertex, positioned
so that it represents a map from $B_i \to B_i \ot B_i$. When the trivalent vertex is positioned so that it represents a map from $B_i \ot B_i \to B_i$, the corresponding map of
bimodules will simply apply the Demazure operator $\partial_i$ to the middle term in $R \ot_{R^i} R \ot_{R^i} R$.

The relation \eqref{ipidecomp} is sent under $\FC$ to the first direct sum decomposition below, while flipping the colors yields the second. Here, blue is $i$, red is $i+1$, and
$J=\{i,i+1\}$.

\begin{eqnarray} B_i \otimes B_{i+1} \otimes B_i = B_J \oplus B_i\\ B_{i+1} \otimes B_i \otimes B_{i+1} = B_J \oplus B_{i+1}. \end{eqnarray}

That is, the identity $1_{i(i+1)i}$ is decomposed into orthogonal idempotents. The first idempotent, which we call a \emph{doubled 6-valent vertex}, is the projection from $B_i \ot
B_{i+1} \ot B_i$ to its summand $B_J$. The 6-valent vertex itself is the projection from $B_i \ot B_{i+1} \ot B_i$ to $B_J$ and then the inclusion into $B_{i+1} \ot B_i \ot B_{i+1}$.

We call the following map, which is the projection from $i(i+1)i$ to the ``wrong" summand, by the name \emph{aborted 6-valent vertex}. 
\begin{equation} \label{whatisaborted} \ig{1}{aborted6} \end{equation} Because
projections to different summands are orthogonal, we have the following key equation, a simple consequence of \eqref{dot6}, \eqref{assoc1} and \eqref{needle}:
\begin{equation} \ig{1}{dot6inside} \label{dot6inside} \end{equation}
Several times in this paper we will use the fact that the morphism
\begin{equation} \ig{1}{itfactors} \label{itfactors} \end{equation}
factors through the aborted 6-valent vertex. This follows from \eqref{assoc1}.

%
\subsection{Thickening}
\label{subsec-thickening}

%

In this section, we discuss in detail some general facts about partial idempotent completions. We will eventually use the observations of this chapter to enhance the diagrammatics $\DC$ for $\BSBim$ above into a diagrammatic calculus $\fooDC$ for $\fooBim$.

\begin{defn} Let $\CC$ be a full subcategory of an ambient module category. By assuming this, we guarantee that the idempotent completion is fairly nice (Krull-Schmidt); in particular,
it is also embedded in the same ambient category. If $\mc{S}$ is a set of objects in the idempotent completion of $\CC$, we let $\CC(\mc{S})$ be the full subcategory of the ambient
module category whose objects are those of $\CC$ as well as $\mc{S}$. We call this a \emph{partial idempotent completion} or a \emph{thickening} of $\CC$. When $\mc{S}$ consists of a
single object $M$, we denote the thickening by $\CC(M)$. \end{defn}

Let us assume that we have a description of all morphisms in $\CC$, by generators and relations, and that $M$ is a module in the idempotent completion. We may pick out $M$ by using a
particular idempotent $\phi_X$ inside some object $X \in \CC$ of which $M$ is a summand.

\begin{claim} \label{simplepartialidcomp} To obtain a presentation of $\CC(M)$ by generators and relations, we may take as generators the generators of $\CC$ along with two new maps $p_X
\colon X \to M$ and $i_X \colon M \to X$, and as relations the relations of $\CC$ along with $i_X p_X = \phi_X$ and $p_X i_X = \1_M$. \end{claim}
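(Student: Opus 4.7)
The plan is to verify that the proposed generators and relations really do present $\mC(M)$. I would let $\mC'$ denote the category so defined and construct the evident functor $F \colon \mC' \to \mC(M)$, acting as the identity on the generators of $\mC$ and sending $p_X, i_X$ to the splitting data for $M$ as a summand of $X$. This $F$ is well-defined because the relations of $\mC$ hold by hypothesis and the two new relations hold by construction of $p_X$, $i_X$, $\phi_X$; since $F$ is the identity on objects it is automatically essentially surjective, so what remains is to show it is full and faithful.

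Fullness is a direct consequence of $M$ being a retract of $X$. Any $f \colon M \to N$ with $N \in \mC$ factors as $f = (fp_X) i_X$, and $fp_X \colon X \to N$ is a morphism of the ambient module category between two objects of $\mC$, hence lies in $\mC$ by fullness. Analogous factorizations $f = p_X (i_X f)$ for $N \to M$ and $f = p_X (i_X f p_X) i_X$ for $M \to M$ exhibit those morphisms as $F$-images, and morphisms between two objects of $\mC$ are already in $\mC$.

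The main technical step is faithfulness. I would proceed by a normal form argument: in any word in the generators, the object $M$ occurs at an internal position only at a junction of the form $i_X p_X$, since only these two generators touch $M$. Every such junction can be rewritten as $\phi_X \in \End_\mC(X)$ by the relation $i_X p_X = \phi_X$, and after absorbing adjacent $\mC$-generators into a single morphism of $\mC$, every word reduces to one of $a$, $p_X a$, $a i_X$, or $p_X a i_X$ for some $a$ in $\mC$, depending on which of the two endpoints equal $M$. If two words in reduced form have the same $F$-image, the equality can be propagated into $\mC$: for instance, if $g_1 i_X = g_2 i_X$ in $\mC(M)$, postcomposing with $p_X$ yields $g_1 \phi_X = g_2 \phi_X$ in $\mC$, and back in $\mC'$ the identity $g_j i_X = g_j i_X p_X i_X = g_j \phi_X i_X$ (using both new relations) then recovers the equality of the two words. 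The cases $p_X h_1 = p_X h_2$ and $p_X k_1 i_X = p_X k_2 i_X$ are strictly analogous; the latter reduces in $\mC$ to the hypothesis $\phi_X k_1 \phi_X = \phi_X k_2 \phi_X$.

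The only substantive subtlety, and the only place one has to think, is confirming that no additional relations need be imposed. One might worry that the hypothesis $\phi_X k_1 \phi_X = \phi_X k_2 \phi_X$ forcing $p_X k_1 i_X = p_X k_2 i_X$ in $\mC(M)$ ought to be an axiom; the computation above shows it is not, since the two stated relations together with the presentation of $\mC$ already imply all such identities.
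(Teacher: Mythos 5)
Your proposal is correct and takes essentially the same route as the paper, which states the claim as a tautology about idempotent completions: every map into or out of $M$ factors through $i_X$ or $p_X$ via the retract data, and any relation among maps factoring through $M$ comes from a relation among maps factoring through $X$, hence is derivable from the relations of $\mC$ together with $i_Xp_X=\phi_X$ and $p_Xi_X=\1_M$ — your normal-form argument just makes this explicit. (One trivial wording slip: in the step $g_1 i_X = g_2 i_X \Rightarrow g_1\phi_X = g_2\phi_X$ you are \emph{pre}composing with $p_X$, not postcomposing.)
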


\begin{proof} This is a tautological fact about idempotent completions, which we spell out this once. By definition, $\Hom_{\CC(M)}(M,Y)=\Hom_{\CC}(X,Y) \phi_X$, that is, those morphisms
$X \to Y$ which are unchanged under precomposition by $\phi_X$. The map $i_X$ corresponds to $\phi_X \in \End(X)$ itself, and in the thickening, all maps from $M \to Y$ will
clearly be compositions of $i_X$ with some map $X \to Y$. Similar dual statements may be made about $p_X$ and maps to $M$. Therefore $i_X,p_X$ are the only new generators required. Any
relations among maps factoring through $M$ arise from relations for maps factoring through $X$ (by definition of morphisms in $\CC(M)$) and these can all be deduced using the relations
stated above. \end{proof}

While this is sufficient to describe $\CC(M)$, the result is not necessarily an intuitive description. The object $M$ may have a variety of interesting maps to various objects in $\CC$,
whose properties could be deduced solely from the properties of $\phi_X$, but which are not obvious \emph{a priori}. For instance, it is not even obvious which other objects $Y$ might
have $M$ as a summand.

One thing we can do is augment our presentation (or diagrammatics) by adding new symbols for certain maps which can be constructed out of maps in $\CC$ and the new maps $p_X, i_X$. That
is, we add a new generator paired with a new relation which defines the new generator in terms of the existent morphisms. We can then deduce some relations which hold among these new
symbols, by checking them in $\CC(M)$. In doing so, one need not worry about the eternal questions one faces when given a presentation: do we have enough relations? Do we have too many?
We already have a presentation, and we are merely adding new symbols and relations as a more intuitive shorthand.

One useful such augmentation will be to produce inclusions and projections for other objects $Y$ of which $M$ is a direct summand, using only the maps $i_X,p_X$ and maps in $\CC$.

\begin{claim} Given a summand $M$ of $X$ defined by idempotent $\phi_X$, $M$ will also be a summand of $Y$ if and only if there exist maps $\phi_{X,Y} \colon X \to Y$ and $\phi_{Y,X}
\colon Y \to X$ such that $\phi_{X,Y} \phi_X = \phi_{X,Y}$, $\phi_X \phi_{Y,X} = \phi_{Y,X}$, and $\phi_{Y,X} \phi_{X,Y} = \phi_X$. \end{claim}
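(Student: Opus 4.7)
The plan is to prove both directions by producing explicit inclusion/projection maps in terms of the data provided, and verifying that the required idempotent/splitting identities hold using only the composition rules for inclusions and projections of summands.

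For the forward direction, assume $M$ is a summand of both $X$ and $Y$, so we have pairs $i_X \colon M \to X$, $p_X \colon X \to M$ with $p_X i_X = \1_M$ and $i_X p_X = \phi_X$, and similarly $i_Y, p_Y$ with $p_Y i_Y = \1_M$ and $i_Y p_Y = \phi_Y$. I define $\phi_{X,Y} \define i_Y p_X$ and $\phi_{Y,X} \define i_X p_Y$, both of which visibly factor through $M$. Then a direct computation using associativity and $p_X i_X = \1_M = p_Y i_Y$ checks the three identities: $\phi_{X,Y} \phi_X = i_Y p_X i_X p_X = i_Y p_X = \phi_{X,Y}$; symmetrically $\phi_X \phi_{Y,X} = \phi_{Y,X}$; and $\phi_{Y,X} \phi_{X,Y} = i_X p_Y i_Y p_X = i_X p_X = \phi_X$.

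For the converse, suppose we are given $\phi_{X,Y}$ and $\phi_{Y,X}$ satisfying the three relations. I define candidate inclusion and projection maps by
\[
i_Y \define \phi_{X,Y} \, i_X \colon M \to Y, \qquad p_Y \define p_X \, \phi_{Y,X} \colon Y \to M.
\]
The key computation is
\[
p_Y \, i_Y \;=\; p_X \, \phi_{Y,X} \, \phi_{X,Y} \, i_X \;=\; p_X \, \phi_X \, i_X \;=\; p_X \, i_X \, p_X \, i_X \;=\; \1_M \, \1_M \;=\; \1_M,
\]
which shows that $(i_Y, p_Y)$ exhibits $M$ as a direct summand of $Y$. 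As a sanity check on self-consistency, one verifies that $\phi_Y \define i_Y p_Y = \phi_{X,Y} \phi_X \phi_{Y,X} = \phi_{X,Y} \phi_{Y,X}$ is genuinely idempotent: $(\phi_{X,Y}\phi_{Y,X})^2 = \phi_{X,Y}(\phi_{Y,X}\phi_{X,Y})\phi_{Y,X} = \phi_{X,Y}\phi_X\phi_{Y,X} = \phi_{X,Y}\phi_{Y,X}$, using first the third relation and then either of the first two.

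There is essentially no hard step here; the only mild subtlety is choosing the right compositions so that the three given relations line up with the telescoping $p_X i_X = \1_M$. Once the maps $i_Y, p_Y$ are defined as above, every identity reduces in one or two lines to the defining relations. The argument is entirely formal and takes place inside any additive category whose idempotents split, so it applies directly to the ambient module category in which $\mC$ is assumed to sit.
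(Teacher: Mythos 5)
Your proof is correct and follows essentially the same route as the paper: the paper likewise sets $i_Y = \phi_{X,Y} i_X$, $p_Y = p_X\phi_{Y,X}$ in one direction and $\phi_{X,Y}=i_Y p_X$, $\phi_{Y,X}=i_X p_Y$ in the other, leaving the verifications to the reader, which you have simply carried out explicitly.
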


\begin{proof} We let the inclusion map $i_Y \colon M \to Y$ be $i_Y \define \phi_{X,Y} i_X$, and similarly we define the projection $p_Y \define p_X \phi_{Y,X}$. Conversely, given
inclusion and projection maps to $Y$, we let $\phi_{X,Y} = i_Y p_X$ and $\phi_{Y,X} = i_X p_Y$. The reader may verify that this works. \end{proof}

Thus, if we know what the transition maps $\phi_{X,Y}$ are explicitly, then we may augment our presentation by adding new maps $i_Y$ and $p_Y$, and relations as in the claim above and
its proof. Having such an augmentation will make it more obvious that $M$ is a summand of both $Y$ and $X$. The point is that the data of the transition maps is data which is entirely defined in terms of morphisms in $\CC$, not in any completion thereof.

We now generalize this to the form we will use, and leave proofs to the reader. In the claim below, one should think of $\phi_{X,X}$ as the idempotent $\phi_X$.

\begin{claim} \label{partialidempotentcompletion} Suppose that we have a nonempty collection $\{X_\alpha\}$ of objects in $\CC$ for which $M$ is a summand (that is, with each object we
fix an inclusion and projection map defining $M$ as a summand). Let $\phi_{\alpha,\beta}$ be the map $X_\alpha \to X_\beta$ given by the composition $X_\alpha \to M \to X_\beta$ of a
projection map with an inclusion map. Note that this implies $\phi_{\beta,\gamma} \phi_{\alpha,\beta} = \phi_{\alpha,\gamma}$. The maps $\phi_{\alpha,\beta}$ are morphisms in $\CC$, so
let us suppose that we know how to describe these maps explicitly. We may obtain a presentation of $\CC(M)$ as follows. The generators will consist of those generators of $\CC$ as well
as new maps $p_\alpha \colon X_\alpha \to M$ and $i_\alpha \colon M \to X_\alpha$. The relations will consist of those relations in $\CC$ as well as the new relations $i_\beta p_\alpha =
\phi_{\alpha,\beta}$ and $p_\alpha i_\alpha = \1_M$. \end{claim}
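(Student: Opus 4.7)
The plan is to reduce to the single-object case already handled in Claim \ref{simplepartialidcomp}. Fix any one index $\alpha_0$ in the (nonempty) collection. Applying Claim \ref{simplepartialidcomp} to $X = X_{\alpha_0}$ with idempotent $\phi_{X} = \phi_{\alpha_0,\alpha_0}$ (which equals $i_{\alpha_0} p_{\alpha_0}$ in the language of this claim), we obtain a presentation $P_0$ of $\mC(M)$ by adjoining only the two generators $i_{\alpha_0}, p_{\alpha_0}$ and the two relations $i_{\alpha_0}p_{\alpha_0} = \phi_{\alpha_0,\alpha_0}$ and $p_{\alpha_0}i_{\alpha_0} = \1_M$. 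Call the proposed presentation $P$. I would prove $P$ and $P_0$ present the same category by writing down mutually inverse functors between them.

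In one direction, a functor $F \colon P_0 \to P$ is immediate: the generators of $P_0$ appear among those of $P$, and both defining relations of $P_0$ are instances (at $\alpha = \beta = \alpha_0$) of the relations already imposed in $P$. In the other direction, define $G \colon P \to P_0$ by sending
\[
i_\alpha \;\longmapsto\; \phi_{\alpha_0,\alpha}\, i_{\alpha_0},
\qquad
p_\alpha \;\longmapsto\; p_{\alpha_0}\, \phi_{\alpha,\alpha_0},
\]
where each $\phi_{\alpha,\beta}$ is a morphism of $\mC$, hence available in $P_0$. I would then verify the defining relations of $P$ under $G$: the relation $i_\beta p_\alpha = \phi_{\alpha,\beta}$ goes to $\phi_{\alpha_0,\beta}\,(i_{\alpha_0}p_{\alpha_0})\,\phi_{\alpha,\alpha_0} = \phi_{\alpha_0,\beta}\,\phi_{\alpha_0,\alpha_0}\,\phi_{\alpha,\alpha_0} = \phi_{\alpha,\beta}$ by the composition law $\phi_{\beta,\gamma}\phi_{\alpha,\beta} = \phi_{\alpha,\gamma}$, and the relation $p_\alpha i_\alpha = \1_M$ goes to $p_{\alpha_0}\,\phi_{\alpha,\alpha_0}\phi_{\alpha_0,\alpha}\,i_{\alpha_0} = p_{\alpha_0}\,\phi_{\alpha_0,\alpha_0}\,i_{\alpha_0} = p_{\alpha_0}(i_{\alpha_0}p_{\alpha_0})i_{\alpha_0} = \1_M$, using $p_{\alpha_0}i_{\alpha_0} = \1_M$ twice.

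It remains to check $F \circ G$ and $G \circ F$ are identities. For $G \circ F$, the generators $i_{\alpha_0}, p_{\alpha_0}$ of $P_0$ are sent to $\phi_{\alpha_0,\alpha_0}i_{\alpha_0} = i_{\alpha_0}p_{\alpha_0}i_{\alpha_0} = i_{\alpha_0}$ and symmetrically for $p_{\alpha_0}$. For $F \circ G$, each generator $i_\alpha$ of $P$ is sent to $\phi_{\alpha_0,\alpha}i_{\alpha_0}$ viewed inside $P$; but now $\phi_{\alpha_0,\alpha}$ equals $i_\alpha p_{\alpha_0}$ by one of the defining relations of $P$, so the image becomes $i_\alpha p_{\alpha_0} i_{\alpha_0} = i_\alpha \cdot \1_M = i_\alpha$, and likewise for $p_\alpha$.

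The only real point of friction is the bookkeeping around the composition identity $\phi_{\beta,\gamma}\phi_{\alpha,\beta} = \phi_{\alpha,\gamma}$; once this is in hand the verifications are mechanical. Note that this identity itself is automatic from the factorization $\phi_{\alpha,\beta} = i_\beta p_\alpha$ together with $p_\beta i_\beta = \1_M$, so the consistency of the proposed relations is essentially forced by the relations themselves. Thus no additional relations beyond those stated are needed, and the main assertion follows.
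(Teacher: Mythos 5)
Your proof is correct, and it follows essentially the same route the paper intends: the paper explicitly leaves this verification to the reader, having proved the one-object case (Claim \ref{simplepartialidcomp}) and the two-object case by exactly the reduction you use, namely defining $i_\alpha = \phi_{\alpha_0,\alpha}\, i_{\alpha_0}$ and $p_\alpha = p_{\alpha_0}\, \phi_{\alpha,\alpha_0}$ for a fixed $\alpha_0$. Your check of the mutually inverse functors between the two presentations is the right bookkeeping and fills in the omitted details correctly.
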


\begin{claim} A collection of functions $\phi_{\alpha,\beta} \colon X_\alpha \to X_\beta$ will define a mutual summand if and only if $\phi_{\beta,\gamma} \phi_{\alpha,\beta} =
\phi_{\alpha,\gamma}$ for all $\alpha,\beta,\gamma$. \end{claim}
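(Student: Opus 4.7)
The plan is to prove the two directions separately, with the forward direction being essentially a bookkeeping exercise and the reverse direction requiring us to construct a common summand $M$ from the data.

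For the forward direction, assume $M$ is a mutual summand of the $X_\alpha$ with inclusions $i_\alpha \colon M \to X_\alpha$ and projections $p_\alpha \colon X_\alpha \to M$ satisfying $p_\alpha i_\alpha = \1_M$, and with $\phi_{\alpha,\beta} = i_\beta p_\alpha$ by definition. Then
\[ \phi_{\beta,\gamma}\phi_{\alpha,\beta} = i_\gamma p_\beta i_\beta p_\alpha = i_\gamma \1_M p_\alpha = \phi_{\alpha,\gamma}, \]
which is the desired cocycle relation.

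For the reverse direction, suppose the cocycle relation $\phi_{\beta,\gamma}\phi_{\alpha,\beta} = \phi_{\alpha,\gamma}$ holds. Setting $\alpha = \beta = \gamma$ shows that each $\phi_{\alpha,\alpha}$ is an idempotent endomorphism of $X_\alpha$. Since $\mC$ sits inside a module category where idempotents split, I can choose a splitting $X_\alpha = M_\alpha \oplus N_\alpha$ realizing $\phi_{\alpha,\alpha} = i_\alpha p_\alpha$ with $p_\alpha i_\alpha = \1_{M_\alpha}$. The specializations $\phi_{\beta,\beta}\phi_{\alpha,\beta} = \phi_{\alpha,\beta} = \phi_{\alpha,\beta}\phi_{\alpha,\alpha}$ (obtained by setting $\gamma=\beta$ or $\alpha=\beta$) show that $\phi_{\alpha,\beta}$ factors through $M_\beta$ and kills the complement of $M_\alpha$, so I obtain restricted maps $\bar{\phi}_{\alpha,\beta} \define p_\beta \phi_{\alpha,\beta} i_\alpha \colon M_\alpha \to M_\beta$ that still satisfy the cocycle relation. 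Setting $\alpha = \gamma$ yields $\bar{\phi}_{\beta,\alpha}\bar{\phi}_{\alpha,\beta} = \bar{\phi}_{\alpha,\alpha} = \1_{M_\alpha}$, so $\bar{\phi}_{\alpha,\beta}$ and $\bar{\phi}_{\beta,\alpha}$ are mutually inverse isomorphisms.

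The main obstacle, though really a mild one, is to identify all the $M_\alpha$ with a single common object $M$ in such a way that the original $\phi_{\alpha,\beta}$ is recovered as a composition $X_\alpha \to M \to X_\beta$. To do this I fix one index $\alpha_0$, set $M = M_{\alpha_0}$, and define compatible isomorphisms $\iota_\alpha \define \bar{\phi}_{\alpha_0,\alpha} \colon M \to M_\alpha$ with inverses $\pi_\alpha \define \bar{\phi}_{\alpha,\alpha_0}$. Replacing the original splittings by $i'_\alpha \define i_\alpha \iota_\alpha$ and $p'_\alpha \define \pi_\alpha p_\alpha$, I have $p'_\alpha i'_\alpha = \1_M$, and the computation
\[ i'_\beta p'_\alpha = i_\beta (\iota_\beta \pi_\alpha) p_\alpha = i_\beta \bar{\phi}_{\alpha,\beta} p_\alpha = \phi_{\beta,\beta}\,\phi_{\alpha,\beta}\,\phi_{\alpha,\alpha} = \phi_{\alpha,\beta} \]
(using the cocycle relation at both ends) shows that the $\phi_{\alpha,\beta}$ really do arise from a common direct summand, completing the proof.
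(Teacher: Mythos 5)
Your proof is correct: the forward direction is the evident computation, and your reverse direction---splitting the idempotents $\phi_{\alpha,\alpha}$ in the ambient module category and transporting everything to a single object $M=M_{\alpha_0}$---supplies exactly the verification the paper leaves to the reader, in the same spirit as its proof of the preceding two-object claim, where one splits a single idempotent and defines the remaining inclusions and projections by composing with the transition maps. The only stylistic difference is that you split every $\phi_{\alpha,\alpha}$ and check that the induced maps $\bar{\phi}_{\alpha,\beta}$ are mutually inverse isomorphisms before transporting, whereas splitting only $\phi_{\alpha_0,\alpha_0}=i_{\alpha_0}p_{\alpha_0}$ and setting $i_\beta \define \phi_{\alpha_0,\beta}\,i_{\alpha_0}$ and $p_\beta \define p_{\alpha_0}\,\phi_{\beta,\alpha_0}$ yields $p_\beta i_\beta = \1_M$ and $i_\beta p_\alpha = \phi_{\alpha,\beta}$ directly from the cocycle relation; in either version the choice of $\alpha_0$ uses the nonemptiness of the collection, which the paper makes explicit in Claim \ref{partialidempotentcompletion}.
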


\begin{defn} \label{defn:consistentfamily} We call a collection of morphisms $\phi_{\alpha,\beta}$ satisfying $\phi_{\beta,\gamma} \phi_{\alpha,\beta} = \phi_{\alpha,\gamma}$ a \emph{consistent family of projectors}.
\end{defn}

\begin{remark} \label{rmk:consistentisoms} A similar concept is a \emph{consistent family of isomorphisms}, which occurs when all $\phi_{\alpha,\beta}$ are isomorphisms. This is the data required to state that a
family of objects are canonically isomorphic. In fact, the idempotent $\phi_{\alpha, \alpha}$ will produce an object of the Karoubi envelope for each $\alpha$, and the maps
$\phi_{\alpha, \beta}$ will descend to a consistent family of isomorphisms between these various objects. \end{remark}

This concludes the discussion of adding numerous projections and inclusions to the presentation of $\CC(M)$. Of course, we may want to augment our presentation still further, to help
describe other features of the new objects.

\begin{remark} Given a consistent family of projectors, a map in $\CC(M)$ from $M$ to any object $Z$ can be specified by one of the following equivalent pieces of data \begin{itemize}
\item for a single $X$ of which $M$ is a summand, a map $f$ from $X \to Z$ such that $f \phi_{X,X} = f$, \item a ``consistent" family of maps $f_{\alpha} \colon X_{\alpha} \to Z$ such
that $f_{\alpha} \phi_{\beta,\alpha} = f_{\beta}$. \end{itemize} We will usually define a map by using one member of the consistent family, but once the map is defined, we will freely
use any member of the family (and will often investigate what certain other members of the family look like).

Note that we can also let $g$ be an arbitrary map $X \to Z$, without assuming that $g \phi_{X,X} = g$, and let $f = g \phi_{X,X}$. Then $f$ will yield a map from $M$ to $Z$. In this description, multiple different maps $g$ can give rise to the same map $f$, so the description is not unique.
\label{familyofmaps} \end{remark}

The goal of the next chapter is to present $\fooBim$ by generators and relations. We know that, for any parabolic subgroup $J$, $B_J$ will be a summand of $B_{\ii}$ where $\ii$ is any
reduced expression for $w_J$. For instance, $B_{s_1s_2s_1s_3s_2s_1}$ is a summand of $B_{s_1} \ot B_{s_2} \ot B_{s_1} \ot B_{s_3} \ot B_{s_2} \ot B_{s_1}$. The set of reduced
expressions for $w_J$ gives a family of objects in $\BSBim$ for which we wish to have inclusion and projection maps, which means that we will need to find an explicit description of a
consistent family of projectors $\phi_{\ii,\jj} \colon B_{\ii} \to B_{\jj}$ for any two reduced expressions of $w_J$. 

\begin{prop} \label{prop:isitBJ} Suppose that one has a family of maps $\phi_J = \{ \phi_{\ii,\jj} \}$ for each pair of reduced expressions of $w_J$, which satisfy the following three
properties. \begin{itemize} \item The family $\phi_J$ is a consistent family of projectors, picking out a summand $X$. \item The summand $X$ satisfies $X \ot i \cong X(1) \oplus X(-1)$
for each $i \in J$. \item The space $\HOM_{\DC}(X,\emptyset)$ is a cyclic $R$-module, generated in degree $+d_J$. \end{itemize} Then $X$ is indecomposable, and is sent by the functor
$\FC$ to the Soergel bimodule $B_J$. \end{prop}

\begin{proof} This follows immediately from Lemma \ref{isitBJ}. \end{proof}

Finding these maps $\phi_J$ and proving these three properties is the job of the next chapter.

\section{Expression graphs and the idempotent}
\label{sec-calcs}

%
\subsection{Expression graphs and path morphisms}
\label{subsec-expressions}
%
%

Most of the terminology of this section is ad hoc, but there does not seem to be any standard terminology in the literature.

\begin{defn} Let $w$ be an element of $S_n$. Let $\tilde{\Gamma_w}$, the \emph{expanded expression graph}, be the set of reduced expressions for $w$. We give $\tilde{\Gamma_w}$
the structure of an undirected graph by placing an edge between $x$ and $y$ if and only if they are related by a single application of either $s_is_js_i=s_js_is_j$ for $i,j$ adjacent,
or $s_is_j=s_js_i$ for $i,j$ distant. We may distinguish between these two different kinds of edges, calling the former \emph{adjacent edges} and the latter \emph{distant edges}.
Distant edges will be drawn as dashed lines in the examples below.

We may shorten reduced expressions to a sequence of indices, such as $121$ for $s_1 s_2 s_1$.

We may place an orientation on the adjacent edges of the expanded expression graph, using the \emph{lexicographic partial order}, so that arrows always go from $i(i+1)i$ to
$(i+1)i(i+1)$. The distant edges remain unoriented. When we speak of an \emph{oriented path} in $\tilde{\Gamma}$, we refer to a path which may follow distant edges freely, but can only
follow adjacent edges along the orientation. A \emph{reverse oriented path} is an oriented path backwards. When we say \emph{path} with no specification, we refer to any path, which may
follow the adjacent edges in either direction. \end{defn}

\begin{example} The expanded expression graph for $21232$ in $S_4$. \igc{1}{21232graph} \end{example}

\begin{defn} Let $\Gamma_w$, the \emph{(conflated) expression graph}, be the graph which is the quotient of $\tilde{\Gamma}$ by all distant edges; that is, one identifies any two
vertices connected by a distant edge, and removes the distant edges. \end{defn}

The following examples will include some important definitions.

\begin{example} The expanded expression graph for $135$ in $S_6$. \igc{1}{135graph} There are two distinct paths from $135$ to $531$, which form a hexagon. We refer to any cycle of this
form appearing in any expanded expression graph (which will occur whenever $\ldots ijk \ldots$ occurs inside a larger word, for $i,j,k$ all mutually distant) as a \emph{distant hexagon}. Note that the conflated expression graph is a point.
\end{example}

\begin{example} The expanded expression graph for $1214$ in $S_5$. \igc{1}{1214graph} There are two distinct (oriented) paths from $1214$ to $4212$, which form an octagon. As above, we
refer to any cycle of this form in any expanded expression graph as a \emph{distant octagon}. Note that the conflated expression graph is a single edge. \end{example}

\begin{example} The expanded expression graph for the longest element $121321$ in $S_4$. \igc{1}{121321graph} There are different kinds of cycles appearing here. For instance, a square
is formed between $213231$ and $231213$, because there are two \emph{disjoint} distant moves which can be applied, and one can apply them in either order. Any square of this kind in any
graph we call a \emph{disjoint square}. A disjoint square can involve distant or adjacent edges, with parallel edges having the same type. For example, there is a disjoint square or adjacent edges from $121343$ to $212434$.

Ignoring the ambiguities created by the two disjoint squares in this picture, or considering the conflated expression graph, there are two oriented paths from $121321$ to $323123$. We
refer to any cycle of this form in any expanded expression graph as a \emph{Zamolodchikov cycle}. \end{example}

Expression graphs were studied deeply in Manin-Schechtman \cite{ManSch}. In that paper they define an $n$-category, where one can at various stages view expressions as objects, distant
and adjacent edges as morphisms between objects, the cycles above as 2-morphisms between morphisms, and on to even higher structure. We will use only a few of their results here.

\begin{remark} To the casual reader, it may not be obvious how Manin-Schechtman discusses expression graphs at all in \cite{ManSch}, or how the results there translate into the results
here. We recommend closely reading \S 2 of that paper just for the example of $I=\{1,\ldots,n\}$ and $k \le 4$, and skimming \S 3, always keeping the examples of $S_3$ (see Chapter 3
Example 6) and $S_4$ in mind. The key results will be Theorem 3 and Lemma/Corollary 8 from \S 2, which we will translate below. \end{remark}

\begin{prop} \label{cyclesingraph} For any $w \in S_n$, the graph $\tilde{\Gamma_w}$ is connected. The 4 types of cycles mentioned above (disjoint squares, distant hexagons, distant
octagons, and Zamolodchikov cycles) generate $H^1(\tilde{\Gamma_w})$ topologically. Viewing these cycles as transformations on oriented paths (switching locally from one oriented path in a
cycle to the other), any two oriented paths with the same source and target can be reached from each other by these cycles.

The same statements are true for $\Gamma_w$, restricting oneself only to disjoint squares and Zamolodchikov cycles (for the remaining cycles are trivial in $\Gamma_w$). \end{prop}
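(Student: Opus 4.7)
The plan is to reduce all three claims to the combinatorial analysis of Manin–Schechtman, proceeding in order of increasing delicacy.

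For connectedness of $\tilde{\Gamma}_w$, I would simply invoke the Matsumoto–Tits theorem: any two reduced expressions for the same element of a Coxeter group are related by a sequence of braid moves. Since the adjacent and distant edges of $\tilde{\Gamma}_w$ are exactly the two kinds of braid moves available in type $A$, this says precisely that $\tilde{\Gamma}_w$ is path-connected.

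For the cycle–generation statement, the strategy is to localize. Each edge of a cycle corresponds to a braid move affecting a short window (two consecutive letters for a distant edge, three for an adjacent edge). Two successive edges in a cycle are a pair of such moves; I would argue that if their windows are disjoint, the two moves commute, so we can slide them past each other and cut a disjoint square off the cycle. The remaining case is when the two windows overlap, and here the key observation is that the configuration in the overlap involves at most four positions and at most three mutually interacting colors. A case analysis on the adjacency pattern of the colors in that small window gives exactly the three remaining cycle types: three pairwise distant colors produce a distant hexagon, a pattern of the form $iji\,k$ with $k$ distant from one of $i,j$ but not the other produces a distant octagon, and three colors with the full $\{1,2,3\}$ adjacency pattern produce a Zamolodchikov cycle. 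This is the combinatorial content of Manin–Schechtman Chapter 2 Theorem 3, and after translation it shows that every cycle can be reduced, via insertions of these four standard cycles, to the trivial loop; equivalently, these four 2-cells fill $\tilde{\Gamma}_w$ up to second homotopy and hence generate $H^1(\tilde{\Gamma}_w)$.

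For the oriented-path statement, note that each of the four standard cycles has a natural ``source'' and ``sink'' with respect to the lexicographic orientation, and the two halves of the cycle are genuinely oriented paths between those extremes (this is built into how the orientation was defined, and is Manin–Schechtman Lemma/Corollary 8 for type $A$). Given two oriented paths $\alpha,\beta$ with the same source and target, the loop $\alpha \cdot \beta^{-1}$ can, by the previous paragraph, be decomposed into the four standard cycles; each such move replaces one oriented sub-path by the other oriented sub-path of a standard cycle, so one can transport $\alpha$ to $\beta$ using only these moves. Finally, passing to the conflated graph $\Gamma_w$: distant hexagons collapse to a single point since the three involved expressions are all distant-equivalent, and distant octagons collapse to a disjoint square (two reduced expressions, with a single pair of oriented paths between them differing only by which side of the octagon was used), so only disjoint squares and Zamolodchikov cycles survive.

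The hard part will be the overlap case analysis in the second paragraph: verifying that when two braid moves act on overlapping windows the \emph{only} local cycles they can produce are the three listed is genuinely a finite but not completely mechanical check, since one must rule out spurious configurations (for example by checking that any longer ``conflict'' decomposes further into shorter ones). This is exactly what Manin–Schechtman do for their higher Bruhat orders, and I would follow the translation suggested in the remark, verifying the small cases $S_3$ and $S_4$ by hand and then invoking their classification for the general statement.
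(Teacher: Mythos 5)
Your proposal is essentially the paper's own argument: the paper proves this proposition by simply citing Manin--Schechtman (Chapter 2, Corollary 8, with Theorem 3 as the underlying classification), which is exactly where your sketch ultimately defers the hard overlap/case analysis, while the Matsumoto--Tits connectedness step and the collapse analysis for $\Gamma$ are standard glosses on that citation. One small imprecision: in $\Gamma$ a distant octagon collapses to a trivial loop (its two adjacent edges join the same pair of conflated vertices), not to a disjoint square, but this does not affect the conclusion.
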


\begin{proof} See \cite[\S 2, Corollary 8]{ManSch}. In fact, the disjoint squares and Zamolodchikov cycles in $\Gamma_w$ essentially give rise to the two different kinds of edges in the
next higher Bruhat order. \end{proof}

Now we see how to combine the theory of expression graphs with morphisms in $\DC$.

\begin{defn} To a vertex $\xb \in \tilde{\Gamma_w}$ we may associate an object $B_\xb \in \DC$, which is simply the sequence of indices corresponding to $\xb$. (Although $B_{\xb}$ looks
more like an object in $\BSBim$, we felt this made it easier to distinguish between the vertex in $\tilde{\Gamma}$ and the corresponding object in $\DC$.) To a path $\xb \fancyto \yb$
in $\tilde{\Gamma_w}$ we may associate a morphism $B_\xb \to B_\yb$ in $\DC$, by assigning the 4-valent vertex to a distant edge, and the 6-valent vertex to an adjacent edge. We call
this the \emph{path morphism} associated to the path. To a length $0$ path at vertex $B_\xb$ we associate the identity morphism of $B_\xb$. Note that reversing the direction of a path
will corresponding to placing the path morphism upside-down. \end{defn}

We will consistently use bold letters for vertices in $\tilde{\Gamma_w}$ or $\Gamma_w$. The notation $B_{\xb}$ is just like the notation $B_{\ii}$, and in fact $\xb$ is a special kind
of sequence $\ii$.

\begin{prop} \label{prop:BSconsistent} Let $f$ and $g$ represent two oriented (resp. reverse oriented) paths from $\xb$ to $\yb$. Then their path morphisms are equal. In the notation of the introduction, this orientation is BS-consistent. \end{prop}

\begin{proof} Since oriented paths $f$ and $g$ are related by the various cycle transformations above (in Proposition \ref{cyclesingraph}) we need only show that the path morphism is
unchanged under these cycle transformations. Because $\DC$ is a monoidal category, the morphisms associated to either path in a disjoint square are equal. Relation \eqref{distslide4} is
a restatement of the fact that the morphisms associated to either path in the distant hexagon are equal; similarly for \eqref{distslide6} and the distant octagon. Relation
\eqref{assoc3} is a restatement of the fact that the morphisms associated to either \emph{oriented} path in the Zamolodchikov cycle are the same. Flipping the pictures upside-down
yields the statement for reverse oriented paths. There is one additional kind of loop which is possible for oriented paths: following a distant edge from $\xb$ to $\yb$ and then right
back to $\xb$. But this is the identity by \eqref{R2}. \end{proof}

\begin{remark} If $\xb$ and $\yb$ are connected by a distant edge, then the loop $B_\xb \to B_\yb \to B_\xb$ is the identity of $B_\xb$, while if the edge is an adjacent edge then the
loop is \emph{not} the identity, but is instead the doubled 6-valent vertex, an idempotent of $B_\xb$. In this sense adjacent edges and distant edges are unsurprisingly different.
\end{remark}

\begin{remark} \label{orientationmatters} In the Zamolodchikov cycle for the longest element of $S_4$, some of the non-oriented paths are non-equal (beyond the trivial example of the
previous remark).

For instance, consider the two possible (necessarily unoriented) path morphisms from $212321$ to $321232$ which don't recross an oriented edge. These are pictured below as morphisms
fitting in a circle with boundary $212321232123$, and where blue, red, and green represent $1$, $2$, $3$ respectively.
\begin{equation} \label{nonorientednonequal} \ig{.8}{nonorientednonequal} \end{equation} To show that they are non-equal, we
attach an aborted 6-valent vertex to the top of both sides: the right is clearly zero by \eqref{dot6inside}, while the left is nonzero. \igc{.8}{nonorientednonequalproof1}
\igc{.8}{nonorientednonequalproof2} In this calculation, the first equality comes from \eqref{dot6}, the second from \eqref{assoc2} and \eqref{doubleasspartial}, the third from
\eqref{ipidecomp}, and the fourth from \eqref{dot6inside}. \end{remark}

\begin{cor} Consider a set of vertices $\{\xb_\alpha\}$ which are connected using only distant edges, and let $\phi_{\alpha,\beta}$ be the (unique) path morphism (using only distant
edges) from $B_{\xb_\alpha}$ to $B_{\xb_\beta}$. Then $\phi$ forms a consistent family of isomorphisms. \end{cor}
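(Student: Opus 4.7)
The plan is to deduce everything directly from the preceding Proposition together with Caveat the First, since for paths that use only distant edges the ``oriented or reverse oriented'' hypothesis of the Proposition is vacuous. Concretely, a distant-only path is simultaneously oriented and reverse oriented (the orientation constraint only bites on adjacent edges), so the preceding Proposition tells us that any two distant-only paths from $x_\alpha$ to $x_\beta$ give the same path morphism. This is what makes $\phi_{\alpha,\beta}$ well-defined in the first place, as the statement of the corollary already assumes.

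Next I would verify consistency. By the very definition of path morphisms, composing path morphisms along concatenated paths is the same as taking the path morphism of the concatenation. So if I concatenate a distant-only path $x_\alpha \fancyto x_\beta$ with a distant-only path $x_\beta \fancyto x_\gamma$, I obtain a distant-only path $x_\alpha \fancyto x_\gamma$, and its associated path morphism is $\phi_{\beta,\gamma} \circ \phi_{\alpha,\beta}$. By uniqueness of distant-only path morphisms, this must equal $\phi_{\alpha,\gamma}$, which is the consistency condition $\phi_{\beta,\gamma} \circ \phi_{\alpha,\beta} = \phi_{\alpha,\gamma}$.

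Finally I would show each $\phi_{\alpha,\beta}$ is an isomorphism. Applying consistency with $\gamma = \alpha$ gives $\phi_{\beta,\alpha}\circ\phi_{\alpha,\beta} = \phi_{\alpha,\alpha}$, and by definition the path morphism attached to the length-zero path at $x_\alpha$ is $\id_{B_{x_\alpha}}$. (Equivalently, one can read this off directly from Caveat the First: a distant back-and-forth loop is the identity by relation (\ref{R2}).) Symmetrically $\phi_{\alpha,\beta}\circ\phi_{\beta,\alpha} = \id$, so $\phi_{\beta,\alpha}$ is a two-sided inverse to $\phi_{\alpha,\beta}$.

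There is no real obstacle; the only subtlety to be vigilant about is the contrast with the adjacent case, where a back-and-forth would produce the doubled 6-valent idempotent rather than the identity. This is precisely why the corollary is restricted to sets of vertices connected \emph{only} by distant edges, and why the proof cannot be upgraded to a consistent family of isomorphisms on larger sets without further work.
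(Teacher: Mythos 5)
Your proposal is correct and follows essentially the same route as the paper: distant-only paths are automatically oriented, so the preceding Proposition gives well-definedness and the consistency identity $\phi_{\beta,\gamma}\phi_{\alpha,\beta}=\phi_{\alpha,\gamma}$, and invertibility follows since $\phi_{\alpha,\alpha}$ is the identity. No gaps.
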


\begin{proof} Because the paths use only distant edges, they are oriented, and their compositions are also oriented. The previous proposition therefore immediately implies that
$\phi_{\beta,\gamma} \phi_{\alpha,\beta} = \phi_{\alpha,\gamma}$. They are isomorphisms since $\phi_{\alpha,\alpha}$ is the identity map. \end{proof}

The 4-valent vertices/distant edges are all isomorphisms and do not play a significant role. Henceforth, whenever we speak of the \emph{length} of a path in $\tilde{\Gamma_w}$ we mean
the number of adjacent edges in the path (i.e. the length in $\Gamma_w$).

\begin{defn} We abuse notation henceforth in order to talk about path morphisms of $\Gamma$ instead of $\tilde{\Gamma}$. To a vertex $\xb \in \Gamma$ we may associate a class of
objects, one for each vertex in $\tilde{\Gamma}$ lifting $\xb$, with a fixed family of transition isomorphisms between them. To a path $\xb \fancyto \yb$ in $\Gamma$, we may associate
its \emph{path morphism}, by which refer to the family of morphisms from some lift of $\xb$ to some lift of $\yb$ given by some lift of the path, or equivalently, to any specific
morphism in this family. See Remark \ref{familyofmaps}. \label{pathsinGamma} \end{defn}

%
\subsection{Statement, outline, and preliminaries}
\label{subsec-proofoutline}
%
%

We now restrict our attention to the graph $\Gamma = \Gamma_{w_0}$ for the longest element of $S_{n+1}$.

\begin{notation} Suppose that $\xb \to \yb$ is an edge in $\Gamma$. We write $\xb \downoneto \yb$ to denote the unique length 1 oriented path from $\xb$ to $\yb$, which is just the
edge. We write $\yb \uponeto \xb$ for the reverse-oriented length 1 path from $\yb$ to $\xb$.

Meanwhile, for $\xb,\yb \in \Gamma$, we write $\xb \downto \yb$ for some oriented path from $\xb$ to $\yb$ of unspecified length, assuming that one exists. We write $\yb \upto \xb$ for
some reverse-oriented path. We write $\psi_{\xb \downto \yb}$ and $\psi_{\yb \upto \xb}$ for the corresponding path morphisms, which do not depend on the choice of oriented path by
Proposition \ref{prop:BSconsistent}.

For any path $V$, we write $\overline{V}$ for the reversed path. For any diagram $D$ in $\DC$, we write $\overline{D}$ for the same diagram flipped upside-down. If $D$ is the path
morphism of $V$, then $\overline{D}$ is the path morphism of $\overline{V}$. \end{notation}

\begin{prop} There is a unique source $\sb$ in $\Gamma$, and a unique sink $\tb$. Let $m$ be the length of the shortest (not necessarily oriented) path from $\sb$ to $\tb$. Then every
vertex lies on some \emph{oriented} path $\sb \downto \tb$ of length $m$, and every oriented path $\xb \downto \yb$ can be extended to a length $m$ path $\sb \downto \xb \downto \yb
\downto \tb$. \end{prop}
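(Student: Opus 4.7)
The plan is to derive all four assertions from the Manin--Schechtman result that $\Gamma$ is (the Hasse diagram of) a graded poset with a unique minimum and unique maximum. The proposition is then a package of elementary consequences of this structural fact, combined with an explicit identification of the source and sink.

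First I would identify $s_J$ and $t_J$ concretely. Writing $J = \{i, i+1, \ldots, j\}$, the lexicographically smallest reduced expression
\[
s_i \cdot (s_{i+1} s_i) \cdot (s_{i+2} s_{i+1} s_i) \cdots (s_j s_{j-1} \cdots s_i)
\]
is a source of $\tilde{\Gamma}$: its only braid-type subwords have the shape $s_\ell s_{\ell+1} s_\ell$, whose braid move produces a lex-larger expression (an outgoing adjacent edge), while no subword of the shape $s_{\ell+1} s_\ell s_{\ell+1}$ occurs, ruling out incoming adjacent edges. A routine inspection shows that this ``source'' property is preserved under the distant commutations available in its commutation class, so the whole class represents a single source $s_J$ in $\Gamma$; the lex-largest reduced expression gives $t_J$ symmetrically.

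Second I would cite the Manin--Schechtman theory already invoked in Proposition \ref{cyclesingraph} (\cite{ManSch} Chapter 2 Theorem 3) to conclude that $\Gamma$ is the Hasse diagram of a graded poset. This immediately forces $s_J$ and $t_J$ to be unique and produces an integer $M$ such that every maximal chain has length $M$. Because each edge of a Hasse diagram changes the rank by exactly $\pm 1$, any unoriented $s_J$-to-$t_J$ path has length at least $M$, while oriented source-to-sink paths correspond to maximal chains and thus have length exactly $M$. Hence $m = M$, every oriented $s_J \downto t_J$ realises the minimum, and every vertex $x$ sits on at least one maximal chain (concatenate any oriented $s_J \downto x$ with any oriented $x \downto t_J$), giving the third claim.

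Finally, for the extension claim, given an oriented path $x \downto y$, any oriented extensions $s_J \downto x$ and $y \downto t_J$ (both existing by the previous paragraph) concatenate to an oriented $s_J \downto t_J$ passing through $x$ and $y$, and this is automatically a maximal chain of length $m$. The main obstacle is the actual translation of Manin--Schechtman's higher-Bruhat combinatorics into the graded-poset structure of $\Gamma$; once that dictionary is in place, the remaining arguments are straightforward Hasse-diagram bookkeeping.
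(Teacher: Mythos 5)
Your proposal is correct and takes essentially the same route as the paper: the paper's entire proof is a citation of Manin--Schechtman (\cite{ManSch}, Chapter 2, Theorem 3), which is exactly the structural input you invoke, with your explicit description of $s_J$ and $t_J$ matching the paper's subsequent remark. The graded-poset/Hasse-diagram bookkeeping you add (rank changes by $\pm 1$ per adjacent edge, saturated chains extend to maximal ones) is just an unpacking of what the paper leaves implicit in that citation.
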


\begin{proof} See \cite[\S 2, Theorem 3]{ManSch}. In fact, for the longest element of $S_{n+1}$, one has $m = {n+1 \choose 3}$. \end{proof}

It is easy to see what $\sb$ and $\tb$ are: we give a representative vertex in $\tilde{\Gamma}$ for the vertex in $\Gamma$. \begin{itemize}
	\item $n=1$: $\sb=1$, $\tb=1$.
	\item $n=2$: $\sb=1\ul{21}$, $\tb=2\ul{12}$.
	\item $n=3$: $\sb=121\ul{321}$, $\tb = 323\ul{123}$.
	\item $n=4$: $\sb=121321\ul{4321}$, $\tb=434234\ul{1234}$.
	\item $n=5$: $\sb=1213214321\ul{54321}$, $\tb=5453452345\ul{12345}$. \end{itemize}
At each stage, for $\sb$, we add a new sequence $n,n-1,\ldots,1$. The reader should be able to see the pattern.

\begin{defn} For $\xb, \yb \in \Gamma$, let $\phi_{\xb,\yb} = \psi_{\sb \downto \yb} \circ \psi_{\tb \upto \sb} \circ \psi_{\xb \downto \tb}$. In other words, $\phi_{\xb,\yb}$ is
corresponds to any path which goes from $\xb$ down to the bottom, up to the top, and then down to $\yb$. Let $\chi_{\xb,\yb} = \psi_{\tb \upto \yb} \circ \psi_{\sb \downto \tb} \circ
\psi_{\xb \upto \sb}$ correspond to the any path which goes from $\xb$ up to the top, down to the bottom, and then up to $\yb$. \end{defn}

Note that we have described a family of morphisms for vertices in $\Gamma$ instead of $\tilde{\Gamma}$, but the two concepts are functionally equivalent via Definition
\ref{pathsinGamma}.

Our main theorem, constructing the idempotent for the longest element, is the following.

\begin{thm} \label{MainThm} One has $\phi_{\xb, \yb} = \chi_{\xb, \yb}$ for all $\xb, \yb \in \Gamma$. The family $\phi$ is a consistent family of projectors. Its image is the
indecomposable object $B_{w_0}$ corresponding to the longest element.

If instead we had worked with $\Gamma_{w_J}$ for the longest element of $W_J$, where $J$ is a connected parabolic subset (so that $W_J \cong S_m$ for some $m$), we would obtain the
indecomposable object $B_J$ corresponding to $w_J$. \end{thm}

In the remainder of this section, we give an outline of the proof of Theorem \ref{MainThm}. We also discuss the part of the proof which just manipulates paths in $\Gamma$, without
needing to perform any calculations involving path morphisms.

\begin{defn} Let $Z = \psi_{\sb \downto \tb}$ denote the unique oriented path morphism from source to sink. Let $\overline{Z} = \psi_{\tb \upto \sb}$
denote the unique reverse-oriented path morphism from sink to source. \end{defn}

Note that $\phi_{\tb, \sb} = \overline{Z}$, $\chi_{\sb, \tb} = Z$, and $\phi_{\sb, \sb} = \chi_{\sb, \sb} = \overline{Z} \circ Z$. If $\chi$ is to be a consistent family of projectors,
then we must have \begin{equation} \label{ZZZ} Z \overline{Z} Z = Z. \end{equation} In fact, this equality suffices.

\begin{lemma} Suppose that \eqref{ZZZ} is true. Then $\phi_{\yb,\zb} \circ \phi_{\xb,\yb} = \phi_{\xb,\zb}$ and $\chi_{\yb,\zb} \circ \chi_{\xb,\yb} = \chi_{\xb,\zb}$ for all $\xb, \yb, \zb \in \Gamma$. \end{lemma}

It is easier to think about these equalities when one just describes the path, rather than composing the maps $\psi$. This is because one can read the path in order, rather than
backwards (one of the unfortunate features of function notation). So, we will confuse a path morphism with its path.

\begin{proof} The composition $\phi_{\yb, \zb} \circ \phi_{\xb, \yb}$ comes from the path \[ \xb \downto \tb \upto \sb \downto \yb \downto \tb \upto \sb \downto \zb.\] In the middle,
one has the composition $\tb \upto \sb \downto \tb \upto \sb$ which is just $\overline{Z} Z \overline{Z}$. Using \eqref{ZZZ} to replace this with $\overline{Z}$, we obtain
$\phi_{\xb,\zb}$. \end{proof}

How would one prove \eqref{ZZZ}? One method would be to prove the following equality between path morphisms for each edge $\xb \downoneto \yb$.
\begin{equation} \label{extradontmatter}  \sb \downto \tb \upto \yb \uponeto \xb \downoneto \yb = \sb \downto \tb \upto \yb. \end{equation}
In other words, if we go from source to sink, make our way back up to $\yb$, and then apply a little loop, that loop could be deleted.

\begin{lemma} \label{lem:ZZZ} Let $V$ be any particular oriented path between source and sink, and suppose that \eqref{extradontmatter} holds for each edge $\xb \downoneto \yb$ in $V$.
Then $Z \overline{Z} Z = Z$. \end{lemma}

\begin{proof} The path yielding $Z \overline{Z} Z$ can be thought of applying $Z$, going up $V$, then coming down $V$. Let $\sb = \vb_0 \downoneto \vb_1 \downoneto \cdots \downoneto
\vb_m = \tb$ be the path $V$. Letting $\xb = \sb$ and $\yb = \vb_1$, $\xb \downoneto \yb$ is the first edge in $V$, so we can use \eqref{extradontmatter} to replace $\sb \downto \tb
\upto \sb \downto \tb$ with $\sb \downto \tb \upto \vb_1 \downto \tb$. Now letting $\xb = \vb_1$ and $\yb = \vb_2$, eliminate the second edge of $V$, replacing $\sb \downto \tb \upto
\vb_1 \downto \tb$ with $\sb \downto \tb \upto \vb_2 \downto \tb$. Repeating this argument, we will eventually obtain $\sb \downto \tb$, which is $Z$. \end{proof}

Similarly, we can use \eqref{extradontmatter} to show that $\chi = \phi$.

\begin{lemma} \label{phiischi} Let $V$ be any particular oriented path between source and sink, and suppose that \eqref{extradontmatter} holds for each edge in $V$. Then $\phi_{\xb,
\yb} = \chi_{\xb, \yb}$ for any two vertices along the path $V$. \end{lemma}

\begin{proof} By the same argument used in the proof of Lemma \ref{lem:ZZZ}, we know that for any $\xb$ in $V$ one has an equality of path morphisms
\begin{equation} \label{aorna} \sb \downto \tb \upto \sb \downto \xb = \sb \downto \tb \upto \xb. \end{equation}
Thus
\begin{align*} \phi_{\xb, \yb} & = \xb \downto \tb \upto \sb \downto \yb = \xb \downto \tb \upto \sb \downto \tb \upto \sb \downto \yb = \\
	& = \xb \upto \sb \downto \tb \upto \sb \downto \yb = \xb \upto \sb \downto \tb \upto \yb. \end{align*}
First we replaced $\overline{Z}$ with $\overline{Z}Z\overline{Z}$, then applied the upside-down version of  \eqref{aorna} to $\xb$, and then applied \eqref{aorna} to $\yb$. \end{proof}

The bulk of the computations in this chapter go into proving \eqref{extradontmatter} for just one path $V$, a path that we choose very carefully for its nice combinatorial properties!
It would be nice to have a more path-independent proof of these results.

Let us quickly discuss how this works. The equality \eqref{extradontmatter} really wants to replace the doubled 6-valent vertex $\yb \uponeto \xb \downoneto \yb$ with the identity map
of $\yb$. But these are two different morphisms, and the difference between them is the other idempotent in \eqref{ipidecomp}, which factors through the aborted 6-valent vertex
\eqref{whatisaborted}. Thus \eqref{extradontmatter} is implied by
\begin{equation} \label{abortedwithZ} A_{\yb \uponeto \xb} \psi_{\tb \upto \yb} Z = 0. \end{equation}
where $A_{\yb \uponeto \xb}$ is the aborted 6-valent vertex, instead of the 6-valent vertex, an \emph{aborted edge}. We think of the composition $A_{\yb \uponeto \xb} \psi_{\tb \upto \yb}$ as being an aborted version of $\overline{V}$. So, said quickly, \eqref{abortedwithZ} is the statement that $Z$ is orthogonal to aborted versions of $\overline{Z}$.

This is what we prove, painstakingly, throughout the next several sections, for a special path $V$. Using a combinatorial description of $V$, we describe every possible abortion of $V$,
and check this orthogonality property. The path $V$ that we use is inductively defined and behaves reasonably well for the inclusion of the longest element of $S_n$ inside the longest
element of $S_{n+1}$. We isolate how the aborted 6-valent vertex behaves with respect to the ``inductive'' part of the part $V$, which will be the path $FR_1$ defined in Definition
\ref{defFRi}. 

So, having proven \eqref{extradontmatter}, we know that $\phi$ is a consistent family of projectors. We prove in \S\ref{subsec-projectors} that the image of $\phi_J$ (coming from the
graph $\Gamma_{w_J}$) is the indecomposable $B_J$ corresponding to $w_J$. The proof follows the lines of Proposition \ref{prop:isitBJ}. Let $X$ temporarily denote the image of $\phi$.
We construct a morphism $a_i$ in \S\ref{subsec-thicktrivalent}, one for each $i \in J$, and compute that it descends to a map from $X \ot i$ to $X$ of degree $-1$. Using $a_i$, we
construct the inclusions and projections in a decomposition $X \ot i \cong X(1) \oplus X(-1)$. Then, using orthogonality results like \eqref{abortedwithZ}, we prove that the Hom space
$\HOM(X,\emptyset)$ is at most rank $1$, generated by a specific morphism $\xi_J$ in degree $d_J$. We check that $\xi_J$ is nonzero by applying the functor to bimodules. Put together,
these facts and Proposition \ref{prop:isitBJ} imply that $X \cong B_J$.

We also prove a number of properties of the maps $a_i$, which will simplify the thick calculus. We introduce $a_i$ before proving that $\phi$ is a consistent family of projectors
because it will be useful in the inductive proof of \eqref{abortedwithZ}.

Once we know that the image of $\phi_J$ in the Grothendieck group is $b_J$, we can then prove that everything which was true for our specific path $V$ also holds for a general path.

\begin{lemma} The family $\phi_J$ is orthogonal to any aborted 6-valent vertex. \end{lemma}

\begin{proof} The following is a simple exercise in the Hecke algebra which requires some knowledge of the standard basis and the Deodhar formula. For an arbitrary sequence $\ii$ of length $k$, rewriting $b_{i_1} b_{i_2} \cdots b_{i_k}$ in the standard basis $\{H_w\}$, the coefficient of $H_w$ is zero when $\ell(w) > k$, and otherwise has minimal degree greater than or equal to $-(k - \ell(w))$. In particular, in the standard pairing, $(b_w, b_{i_1} b_{i_2} \cdots b_{i_k})$ has no degree $0$ component if $\ell(w) > k$ and $b_w$ is smooth (all the Kazhdan-Lusztig polynomials are $1$).

Now let $\ii$ be the target of an aborted 6-valent vertex. That is, $\ii$ has length $d_J - 2$, and is a reduced expression for $w_J$ with some $s_i s_j s_i$ replaced by
$s_i$, for $i, j \in J$ adjacent. Then there are no degree $0$ maps from $B_J$ to $\ii$, and the aborted 6-valent vertex must be orthogonal to the projection to $B_J$. \end{proof}

\begin{prop} \label{phiunchanged} The equality \eqref{extradontmatter} holds for all edges. In addition, for any edge from $\xb$ to $\yb$ in $\tilde{\Gamma}$, the composition of
$\phi_{\zb, \xb}$ with this edge is $\phi_{\zb, \yb}$. \end{prop}

\begin{proof} Using \eqref{ipidecomp}, the difference between $\yb \uponeto \xb \downoneto \yb$ and the identity of $\yb$ factors through an aborted 6-valent vertex, and is orthogonal
to $\phi_J$. This proves \eqref{extradontmatter}. Now consider an edge from $\xb$ to $\yb$ in $\tilde{\Gamma}$. If it is a distant edge or an oriented edge, adding this edge to the path
for $\phi_{\zb, \xb}$ yields precisely the path for $\phi_{\zb, \yb}$. If it is a reverse oriented edge, we use \eqref{extradontmatter} (or its flipped version) to deduce the desired
equality. \end{proof}

In particular, this implies that $\chi = \phi$ everywhere using Lemma \ref{phiischi}.

Now we begin the long and nasty proof of \eqref{abortedwithZ} with a study of various vertices and paths between them in $\tilde{\Gamma}$.

%
\subsection{The longest element}
\label{subsec-longest}
%

We fix notation for several useful vertices in $\tilde{\Gamma}$ lying over $\sb$ and $\tb$. We will use a subscript $J$ when describing the corresponding construction in $\tilde{\Gamma}_{w_J}$ for a connected parabolic subset $J$.

\begin{notation} (Example: $n=5$) We write $\sb^R$ for the lexicographically minimal expression, which was given at the beginning of \S\ref{subsec-proofoutline}, e.g. $1\ 21\ 321\ 4321\
54321$.  Note that $\sb^R=\sb^R_K\  54321$ for $K=\{1,2,3,4\}$. We write $\sb^L$ for the horizontal flip of the default sequence, also thought of as the ``left-facing" default sequence,
$\sb^L = 12345\ 1234\ 123\ 12\ 1 = 12345 \sb^L_K$. \end{notation}

\begin{remark} Here and elsewhere, we will be illustrating various proofs and definitions by using examples. This is a dangerous practice, but we solemnly promise that the general case
truly will be evident to the reader after a brief study of the examples (or so we hope!). We will be drawing path morphisms soon enough, and it is incredibly inconvenient to draw a
general case; while the general case could be made explicit symbolically, the advantage of graphical calculus is lost when one passes to long strings of symbols. The examples we give
will be more enlightening. \label{examplesareenough} \end{remark}

\begin{notation} Now we look for expressions interpolating between $\sb^R$ and $\sb^L$. Let $i \in I$, and let $M_i=\{1,2,\ldots,i\}$. Then we write $\sb^R_i$ for the
sequence that begins as $\sb^L$ until $i+1$, i.e. with $12345\ 1234\ \ldots\ 12\ldots (i+1)$ and then concludes with $\sb^R_{M_i}$. Note that $\sb^R_n = \sb^R$. Explicitly,
\begin{itemize} \item $\sb^R_5=\sb^R = 1\ 21\ 321\ 4321\ 54321$, \item $\sb^R_4 = 12345\ 1\ 21\ 321\ 4321$, \item $\sb^R_3 = 12345\ 1234\ 1\ 21\ 321$, \item $\sb^R_{2} = 12345\ 1234\
123\ 1\ 21 = 12345\ 1234\ 123\ 12\ 1 = \sb^R_{1} = \sb^L$. \end{itemize} Flipping all of these horizontally, we get $\sb^L_i = \sb^L_{I,i}$. \end{notation}

We have put spacing in our reduced expressions just to make it easier for the reader to parse, but the spaces have no mathematical significance.

To get from $\sb^R_5$ to $\sb^R_4$, simply find the first instance of each index $12345$ (which is the start of each segment separated by the spaces), and commute them to the left as
far as possible, until the sequence begins with $12345$. That this is possible can be proven inductively. Note that $\sb^R_4 = 12345 \sb^R_K = 12345 \sb^R_{K,4}$ for $K = \{1,2,3,4\}$.
The transformation from $\sb^R_4$ to $\sb^R_3$ does not affect the initial sequence $12345$, and is just the inductively defined operation from $\sb^R_{K,4}$ to $\sb^R_{K,3}$ on the
remainder. Thus one can travel from any $\sb^R_i$ to any $\sb^R_j$ using only distant edges in $\tilde{\Gamma}$, which justifies that they all lie over $\sb$.

The salient feature of $\sb^R_i$ is that, reading from the right, it begins with an expression for the longest element of $M_i = \{1,2,\ldots,i\}$. We will see that when $B_{\sb}$
interacts with other colors on the right, and those other colors are in $M_i$, then $\sb^R_i$ is the most useful representative of the isomorphism class, because the parts on the left
($123451234$ when $i=3$) will not play an important role. This will become clear in examples to come.

\begin{notation} Similarly, let $\tb^R = 5\ 45\ 345\ 2345\ 12345 = \tb^R_1$ be the lexicographically maximal expression, also given at the beginning of \S\ref{subsec-proofoutline}. Let
$\tb^R_2= 54321\ 5\ 45\ 345\ 2345$, and so forth, with $\tb^L_i$ being the horizontal flips. Here, we think of $M_i$ as being $\{i,i+1,\ldots,n\}$. \end{notation}

Now we introduce several paths in $\tilde{\Gamma}$. For the rest of the paper, we use the colors blue, red, green, purple, and black to represent the indices $1,2,3,4,5$ respectively.

\igc{1}{colorconventions1}

\begin{defn} First, we introduce a useful path on subexpressions. For $i<j$ let $F=F_{i,j}$ denote the \emph{flip} path, defined by example here: \[ F_{1,4} = 12\ul{343}21 \downoneto
1243421 = 41\ul{232}14 \downoneto 4132314 = 43\ul{121}34 \downoneto 4321234,\] \[F_{3,5}=34543 \downoneto 35453 = 53435 \downoneto 54345.\] In general, $F_{i,j}$ starts at the sequence
going from $i$ up to $j$ and back down to $i$, and repeatedly applies an adjacent edge to the middle until the final expression is reached. We will usually just call the flip map $F$
when the indices are understood, and also use the same notation $F$ for the opposite path to this one. The flip path $F$ can also be viewed as the unique oriented path from source to
sink in $\Gamma_v$ for $v$ the appropriate permutation. Here is the path morphism for $F_{1,5}$. \igc{1.2}{flip} This diagram is exactly the path described above, up to ``monoidal
operations,'' that is, up to disjoint squares in $\tilde{\Gamma}$. \end{defn}

\begin{defn} Next, we inductively define a specific path $V = V^R$ from $\sb^R \downto \tb^R$. When $n=1$, $\sb=\tb$ and the path is trivial.
Suppose we have defined $V_J$ for all connected $J$.

(Example: $n=5$) Let us define $V$ by example. Let $K=\{1,2,3,4\}$. Then $s^R = s^R_K 54321$. To obtain $V$, first we apply $V_K$ to get $t^R_K 54321 = 4\ 34\ 234\ 1234\ 54321$,
and then we apply a sequence of flip maps of decreasing size $4\ 34\ 234\ \ul{123454321} \downto 4\ 34\ \ul{2345432}\ 12345 \downto 4\ \ul{34543}\ 2345\ 12345 \downto \ul{454}\ 345\
2345\ 12345 \downto 5\ 45\ 345\ 2345\ 12345 = t^R$.

(Example: $n=4$) Here is the picture of $V$: \begin{equation} \label{Vdefn} \ig{1}{Vdefn} \end{equation}  \end{defn}

The path $V$ will be one of our preferred paths from $\sb$ to $\tb$, and thus a preferred realization of the morphism $Z$. Before proving facts about $Z$, we will need to describe
several other important paths.

\begin{defn} \label{defFRi} (Example: $n=4$) We introduce some paths in $\Gamma$ which use compositions of flip maps to bring certain indices to the far right. We think of them as paths in $\Gamma$ because we will be flexible as to which equivalence class of reduced expression we use for the source and sink.

Note that the expression $\tb^R = 4\ 34\ 234\ 1234$ has a single instance of the index $1$. To bring $1$ to the far right, we can apply the following sequence of (reverse) flip maps:
$\ul{4\ 34}\ 234\ 1234 \upto 3\ \ul{43\ 234}\ 1234 \upto 3\ 23\ \ul{432\ 1234} \upto 3\ 23\ 123\ 4321$. We denote this sequence of flips by $\overline{FR}_{\tb,1}$, and let us denote
the sequence in reverse by $FR_{\tb,1}$. In \eqref{Vdefn} above, the path $V$ ends in a sequence of flips, which is exactly $FR_{\tb,1}$.

The notation in $\overline{FR}_{\tb,1}$ is chosen because it takes index $1$ from $\tb$ and Flips it to the Far Right. We let $FR_{\tb,1}$ have no overline because
it is an oriented map, while $\overline{FR}_{\tb,1}$ is reverse-oriented. We will not introduce notation for the source of $FR_{\tb,1}$, which is some vertex in $\Gamma$.

Now let us define $\overline{FR}_{\tb,2}$, which brings the index $2$ to the far right with a minimal sequence of flips. We begin with $\tb^R_2 = 4321\ 434234$, and then apply the flip
maps $4321\ \ul{434}234 \upto 4321\ 3\ul{43234} \upto 4321\ 323432$. Similarly, we define $FR_{\tb,2}$ to be the reverse map, which is pictured below.

\igc{1}{exampleFR2}

To define $\overline{FR}_{\tb,i}$, which brings the index $i$ to the far right, we begin with $\tb^R_i$, which by definition ends in $\tb^R_{M_i}$ where $M_i = \{i, \ldots, n\}$. Then,
to the subsequence $\tb^R_{M_i}$, we apply the flip sequence $\overline{FR}_{\tb_{M_i}, i}$ which has been inductively defined. Thus our final two examples for $n=4$ are
$\overline{FR}_{\tb, 3} \co 4321432\ \ul{434} \upto 4321432\ 343$, and the identity map $\overline{FR}_{\tb, 4} \co 432143243\ 4 \to 432143243\ 4$. The sources of the map $\overline{FR}_{\tb,i}$ are various different expressions in $\tilde{\Gamma}$ which lift $\tb$, but since we view $\overline{FR}_{\tb,i}$ as a path in $\Gamma$, the all have the same source $\tb$.

In similar fashion, we can define $FR_{\sb,i}$, which brings the index $i$ to the far right, starting from $\sb$. Now it is $FR_{\sb,4}$ which is the most complicated, and $FR_{\sb,1}$
which is the identity map. For example, $FR_{\sb,4}$ begins with $\sb^R = 1213214321$ and applies the flips $\ul{121}3214321 \downto 2\ul{12321}4321 \downto 232\ul{1234321} \downto
2324321234$. Similarly, $FR_{\sb,3}$ begins with $\sb^R_3 = 1234\ 121321$ and applies the flips $1234\ \ul{121}321 \downto 1234\ 2\ul{12321} \downto 1234\ 232123$. As before, we define
$\overline{FR}_{\sb,i}$ to be the reverse path, and note that the version without the overline is an oriented path.

Finally, we can also define $FL_{\tb,i}$ and $FL_{\sb,i}$, which Flip the index $i$ to the Far Left. These are the horizontal reflections of the above. \end{defn}
	
\begin{claim} Fix any $i \in I$. Let $FR_{\sb,i} \co \sb \downto \xb$ and $FR_{\tb,i} \co \yb \downto \tb$ be the oriented paths defined above, so that $x$ and $y$ are both expressions with $i$ on the far right. Then there is an oriented path $x \downto y$ which does not alter the rightmost index $i$. \label{rewritezforiclaim}
\end{claim}

This claim allows one to factor the path morphism $Z \co \sb \downto \tb$ as $\sb \downto \xb \downto \yb \downto \tb$. This is another useful path denoted $V_i$, which is adapted to the index $i$. Note that $V = V_1$.

\begin{example} (Example: $n=4$) When $i=2$, we have \igc{1}{rewritezfori} where the unlabeled box represents some unspecified oriented path. We have chosen the source and target of this morphism to be $\sb^R_2$ and $\tb^R_2$ respectively. The flip below the mystery box is $FR_{\sb, 2}$ and the flip sequence above the box is $FR_{\tb, 2}$. \end{example}

\begin{proof} Let $\hat{\xb}$ denote the expression $\xb$ without the final index $i$. Technically, this is an equivalence class of expressions for $w_0 s_i$, living in $\Gamma_{w_0 s_i}$. Define $\hat{\yb} \in \Gamma_{w_0 s_i}$ similarly. We need to show that there is an oriented path $\hat{\xb} \downto \hat{\yb}$ in $\Gamma_{w_0 s_i}$.

First, consider the special case where $i=1$. Then $\xb = \sb$, and $\hat{\xb}$ is the (unique) source of the expression graph for $w_0 s_1$. Thus there is an oriented path $\hat{\xb} \downto \hat{\yb}$. Similarly, when $i = n$, $\hat{\yb}$ is the unique sink for $w_0 s_n$.

Now, consider the example when $n=7$ and $i=3$. Then
\[ \xb = 1234567\ 123456\ 12345\ 1234\ \xb', \quad \quad \yb = 7654321\ 765432\ \yb'.\] 
Here, $\xb'$ is the target of the flip map $FR_3$ from $\sb_{\{1,2,3\}}$, and $\yb'$ is the source of the flip map $FR_3$ to $\tb_{\{3,4,5,6,7\}}$. We know by the special cases above that there are oriented maps $\xb' \downto \tb_{\{1,2,3\}}$ and $\sb_{\{3,4,5,6,7\}} \downto \yb'$ which do not involve the final index $3$. So, replacing $\xb'$ with $\tb_{\{1,2,3\}}$ and $\yb'$ with $\sb_{\{3,4,5,6,7\}}$, it is enough to show that there is an oriented path of the form
\begin{equation}\label{somethingorother} 1234567\ 123456\ 12345\ 1234\ (321\ 32\ 3) \quad \downto \quad  7654321\ 765432\ (34567\ 3456\ 345\ 34\ 3)\end{equation}
which does not involve the final index $3$. 

Let us apply a sequence of flip maps:
\begin{align*} 1234567\ 123456\ 12345\ \ul{1234\ 321}\ 32\ 3 \downto 1234567\ 123456\ \ul{12345\ 4321}\ 234\ 32\ 3 \downto \ldots \\ \downto 7654321\ 234567\ 23456\ 2345\ 234\ (32\ 3).\end{align*}
The result begins with $7654321$ as does our desired target. Removing $7654321$ from both source and target, the remainder is actually of exactly the form \eqref{somethingorother}, except for the parabolic subset $\{2,3,4,5,6,7\}$ instead of $\{1,2,3,4,5,6,7\}$. Now, induction finishes the proof.
\end{proof}

%
\subsection{Thick trivalent vertices}
\label{subsec-thicktrivalent}
%
%

\noindent When we have fixed a parabolic subset $J \subset I$ and wish to express an arbitrary index $i \in J$, we use either the color teal or brown.

\igc{1}{colorconventions2}

Because $B_J \ot B_i \cong B_J(1) \oplus B_J(-1)$ for $i \in J$, there should be some projection/inclusion maps between $B_J \ot B_i$ and $B_J$ of degree $\pm 1$. When $J=\{i\}$ so that
$B_J = B_i$, we know that the projection map of degree $-1$ is drawn as a trivalent vertex (as is the inclusion map of degree $-1$, since they are related under biadjunction). Whatever
they are for $B_J$, we should think of the projection (resp. inclusion) map of degree $-1$ as a \emph{thick trivalent vertex}.

Once we prove that $\phi_J$ is a consistent family of projectors, Remark \ref{familyofmaps} states that we can specify a map $B_J \ot B_i \to B_J$ by giving a map from $B_{\xb} \ot B_i
\to B_{\xb}$, for some reduced expression $\xb$ for $w_J$, which is preserved under pre-composition with $\phi_{\xb, \xb} \ot 1_{i}$ and post-composition with $\phi_{\xb, \xb}$.
Alternatively, we can provide any map $B_{\xb} \ot B_i \to B_{\xb}$, and then pre- and post-compose it as above. The map we provide will be named $a^R_{\xb, i}$, where the $R$ indicates
that the new $i$-colored strand is on the Right. Pre-composing $a$ with $\phi_{\xb, \xb} \ot 1_{i}$ and post-composing with $\phi_{\xb, \xb}$, we obtain the map which descends to the
thick trivalent vertex.

For example, the map $a^R_{\tb^R_J,i}$ has the following form, when $J = \{1,2,3,4\}$. \igc{1}{adesire} Teal represents the arbitrary index $i \in J$. We have rotated the picture so that $i$ appears as a side boundary instead of a bottom boundary.

Similarly, since $B_i \ot B_J$ also splits into two copies of $B_J$, there should be a thick trivalent vertex on the left as well. The horizontal flip of $a^R_{\tb^R_J,i}$ above would be $a^L_{\tb^L_J,i}$, which creates an $i$-labeled strand on the left, and uses the expression $\tb^L_J$. For these
maps, $\xb$ and $i$ can be determined from the colors on the boundary, and $R$ or $L$ determined by where the additional teal line sticks out, so we will often call it simply $a_i$ or
$a$. When the parabolic subset $J$ is not written, it is assumed to be the entire set $I$.

We have not yet proven that $\phi_J$ is a consistent family of projectors, but we will at least define $a^R_{\xb,i}$ for all reduced expressions $\xb$ lifting $\sb$ and $\tb$, and check
that it commutes appropriately with $\phi_{\sb, \sb}$ and $\phi_{\sb, \tb}$. In fact, the basic principle is this: when $\xb$ has $i$ on the right, the thick trivalent vertex comes from
an ordinary trivalent vertex (composed with $\phi$). When it does not, they are intertwined by the path morphism which brings $i$ to the right, namely $FR_i$. This is confirmed in
\eqref{triflippushiter} and \eqref{aflippush} below.

\begin{defn} (Example: $n=4$) Let the morphism $a^R_{\tb^R,i}$ be the morphism described in the following examples. It has top and bottom boundary $\tb^R$ and side
boundary $i$.
\begin{equation} \label{aRtRi} \ig{1}{thicktridefn1} \end{equation}
Read this morphism from right to left. When $i=n$, the result is just a trivalent vertex. Otherwise, the $i$-strand comes from the right, crosses left over any strands with labels $\ge i+2$, and then meets the strand labeled $i+1$ in a 6-valent vertex. Then, the strand labeled $i+1$ crosses left over any strands with labels $\le i-1$. What remains on bottom and top is the reduced expression $\tb^R_{\{2,3, \ldots, n\}}$, with $i+1$ coming in from the right. Then, the process is repeated inductively for the $i+1$ strand.

Any expression $\xb$ for $w_0$ which lives above $\tb$ is connected to any other by a sequence of distant edges, and these distant edges form a compatible family of isomorphisms, as previously discussed. Thus we may define $a^R_{\xb,i}$ to be the morphism intertwined with $a^R_{\tb^R,i}$ by these isomorphisms. We let $a^R_{\tb,i}$ denote this family. To give an example, here is the morphism $a^R_{\tb^R_i,i}$, which has top and bottom boundary $\tb^R_i$ instead.
\begin{equation} \label{aRtRii} \ig{1}{thicktridefn3} \end{equation}
This version of $a_i$ is possibly more intuitive, as it has fewer needless 4-valent vertices. When $i=1$, $\tb^R_1 = \tb^R$ and the diagram is as complicated as before. Otherwise, it decomposes as identity maps next to $a^R_{\tb^R_{J,i},i}$ for $J = M_i = \{i,\ldots,n\}$.

Similarly, one can define $a_i=a^R_{\sb^R,i}$ as follows.
\begin{equation} \label{aRsRi} \ig{1}{thicktridefn2} \end{equation} One can use this to define $a^R_{\xb,i}$ for any expression lifting $\sb$. \end{defn}

Now it is time to gather some results about the various maps $a_i,FR_i,Z$, in an effort to prove that $a_i$ interacts nicely with $\phi_{\sb, \tb}$ and $\phi_{\sb, \sb}$. The key
relation, which leads to all the others, is the following:

\begin{claim} (Example: $n=5$) We have the following equality. \begin{equation} \ig{1}{triflippush}. \label{triflippush} \end{equation} \end{claim}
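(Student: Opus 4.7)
The claim (\ref{triflippush}) is an equality of two diagrams involving the iterated flip path $F$ and a trivalent vertex; informally it says that a trivalent vertex may be ``pushed through'' the flip map at the cost of reshaping the diagram in a controlled way. The strategy I would take is induction on the size of the flip, i.e.\ on $j-i$ where $F = F_{i,j}$.

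For the base case $j-i = 1$, the flip $F$ is simply a single $6$-valent vertex, so the claimed equality lives in a region of a diagram with only two adjacent colors plus one extra strand. I would reduce it to a sequence of local moves: write the trivalent vertex together with the single $6$-valent vertex, and apply the $6$-valent/trivalent interaction (\ref{assoc2}) and its consequence (\ref{doubleasspartial}), together with the dot-slide identity (\ref{dotslidenear}); the $6$-valent absorption (\ref{ipidecomp}) should close the argument. At worst one further invokes the Zamolodchikov relation (\ref{assoc3}) once if a three-color region is produced.

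For the inductive step, I would exploit the structure of $F_{i,j}$ noted in the definition just above the claim: $F_{i,j}$ begins with a $6$-valent vertex on the middle triple $(j{-}1,j,j{-}1)$ and continues with the smaller flip $F_{i,j-1}$ (with appropriately relabeled colors). Dually, the inductive remark ``$a^R_{s^R_I,i}$ is a single $6$-valent vertex to the right of $a_{s^R_K,i-1}$'' suggests that peeling off the outermost $6$-valent vertex on each side of (\ref{triflippush}) leaves two diagrams which differ only by the base-case identity applied in the freshly exposed two-color region, composed with an equality that is exactly the claim for $F_{i,j-1}$ --- i.e.\ the inductive hypothesis. Throughout this peeling, one must translate between different reduced expressions in $\tilde{\Gamma}$ lying over the same vertex of $\Gamma$ by distant slides (\ref{R2}), (\ref{distslide3}), (\ref{dotslidefar}), (\ref{distslide6}); these are harmless because they give a consistent family of isomorphisms, but they dominate the bookkeeping.

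The main obstacle, as usual in this calculus, is not finding the right global idea but controlling the combinatorics in the middle: after peeling off the outer $6$-valent vertex, one is left with a trivalent vertex and a collection of strands that must be slid past several colors before the inductive hypothesis on $F_{i,j-1}$ becomes literally applicable, and in the window where the push takes place one picks up an extra interaction between the trivalent vertex and a $6$-valent vertex of the same color $i$ that must be resolved by (\ref{ipidecomp}) and the ``aborted $6$-valent vertex'' identity (\ref{dot6inside}). Because the author has emphasized that the examples truly are general (see Remark \ref{examplesareenough}), I would carry out the verification in full detail for the case $n=5$ as pictured and simply record that each local move used has an evident analogue for arbitrary $n$.
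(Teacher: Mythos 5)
The paper's proof of (\ref{triflippush}) is a one-step argument, and the place where your proposal goes astray is the inductive step. Write the flip out in full: $F_{1,5}$ factors as the smaller flip $F_{2,5}$ (acting on the middle $2345432$) together with a single $(1,2)$-colored $6$-valent vertex, the two $1$-colored strands being spectators that meet only the distant colors $3,4,5$ along the way. Consequently the trivalent vertex being pushed through the flip can be moved by distant slides alone up to the unique $6$-valent vertex whose colors are not distant from it, and one application of (\ref{doubleasspartial}) there finishes the proof, with $F_{2,5}$ carried along as an untouched box; no induction is needed, and the identical one-line argument works for every $n$. Your induction on $j-i$ could be repaired to say essentially this --- peel off the $(j-1,j)$-colored vertex, which is distant from everything of interest, so that each inductive step is nothing but distant slides plus the inductive hypothesis, and the single genuine two-color interaction is confined to the base case (where it is exactly (\ref{doubleasspartial})) --- but as written you assert that each peeling exposes ``the base-case identity applied in the freshly exposed two-color region,'' i.e.\ one nontrivial interaction per $6$-valent vertex of the flip. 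That is not what happens, and it is precisely where the bookkeeping you worry about would become unmanageable.

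The more serious defect is the set of relations you plan to use at the crucial moment. Both sides of (\ref{triflippush}) are single diagrams and the equality holds on the nose. The relations (\ref{ipidecomp}) and (\ref{dot6inside}) concern the idempotent decomposition of $B_i \ot B_{i+1} \ot B_i$: the first replaces an identity by a sum of two terms, one of which carries dots, and the second only kills the composite of a $6$-valent vertex with an aborted $6$-valent vertex. Invoking them ``to resolve an extra interaction between the trivalent vertex and a $6$-valent vertex of the same color'' would leave you with dot-bearing correction terms that cannot be cancelled at this stage; such terms die only after composition with $z$ (see Propositions \ref{whatkillsgeneral} and \ref{abortedV}), and the whole point of (\ref{triflippush}) is that it holds with no $z$ present. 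Likewise (\ref{dotslidenear}) and (\ref{assoc3}) play no role here: there are no dots in either diagram, and no three-color configuration requiring the Zamolodchikov relation arises, since every color occurring in $F_{2,5}$ other than $2$ is distant from the strands entering the trivalent vertex. The only two-color move needed is (\ref{doubleasspartial}), used exactly once.
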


\begin{proof} Writing out the flip map in full, this is an immediate application of relation (\ref{doubleasspartial}). Explicitly, \igc{1}{triflippushproof1} The box labelled $F$ in the
intermediate calculations is the flip map for $\{2,3,4,5\}$. \end{proof}

We now demonstrate some of this nice behavior for $a_1$.

\begin{claim} We have the following equalities, dealing with $a_1$. The first two are examples for $n=4$.

\begin{equation} \ig{1}{triflippushiter}. \label{triflippushiter} \end{equation}
Symbolically, this is $a_1 = a^R_{\tb^R,i}$ put above $FR_{\tb,1}$.

\begin{equation} \ig{1}{trizpush}. \label{trizpush} \end{equation}
Once again, the diagram above $Z$ on the LHS is $a_1$. The trivalent vertex below $Z$ on the RHS also happens to be $a_1 = a^R_{\sb^R,1}$.

This last equation is not motivated at the moment, but will be used to deal with ``abortion terms'' in a future calculation. This is the $n=5$ example.
\begin{equation} \ig{1}{whatkills}. \label{whatkills} \end{equation}
The upper left corner looks like \eqref{triflippushiter} but for $J = \{2,3,4,5\}$. The bottom is the flip sequence $FR_{\tb,1}$ for $\{1,2,3,4,5\}$.
\end{claim}

\begin{proof} The equality \eqref{triflippushiter} is easily seen to be an iterated use of \eqref{triflippush}, starting with the leftmost flip and moving to the right.
	
To get \eqref{trizpush}, remember that we can always express $Z$ as
\begin{equation} \ig{1}{rewritez} \label{rewritez} \end{equation}
using the path $V$, see \eqref{Vdefn}. Placing $Z_{\{1,2,3\}}$ below \eqref{triflippushiter} yields \eqref{trizpush}.

Applying \eqref{triflippushiter} to all but the last flip in \eqref{whatkills}, the diagram factors through the following picture as it enters the final flip: \igc{1}{whatkillsproof3}
This factoring was observed in \eqref{itfactors}. However, the first thing that happens in a flip is a 6-valent vertex, which kills the aborted 6-valent vertex by \eqref{dot6inside} \end{proof}

It may be useful for the reader to work through the examples of $n=2,3$ explicitly, writing out flip maps and $Z$ in full and checking these results. The general computations are no more
difficult.

The claim above could be thought of as a suite of results about $a_1$. Now we generalize these results to $a_i$ for any $i$.

\begin{claim} We have the following equalities, for any $i \in I$. Each expression has an arbitrary expression lifting $\tb$ as the target. The map $a_i$ represents either $a^R_{\tb,i}$ or $a^R_{\sb,i}$. The map $FR_i$ is $FR_{\tb,i}$. As usual, $\overline{Z}$ represents $Z$ upside-down.

 \begin{equation} \ig{1}{aflippush}. \label{aflippush} \end{equation}

 \begin{equation} \ig{1}{azpush}. \label{azpush} \end{equation}

 \begin{equation} \ig{1}{azzcommute}. \label{azzcommute} \end{equation}

 \begin{equation} \ig{1}{asquaredz}. \label{asquaredz} \end{equation}
\end{claim}

In particular, \eqref{azpush} says that our maps $a_{\tb,i}$ and $a_{\sb,i}$ are intertwined by $Z$, and \eqref{azzcommute} says that $a_{\tb,i}$ commutes with
$\phi_{\tb,\tb}$.

\begin{proof} To prove \eqref{aflippush}, let us choose $\tb^R_i$ to be our target, and use the description of $a_i$ given in \eqref{aRtRii}. If $i=1$ then \eqref{aflippush} is
\eqref{triflippushiter}. Otherwise, both $a_i$ and $FR_i$ are expressed as a tensor product of an identity morphism, and the corresponding maps for the parabolic subset $M_i = \{i,
\ldots, n\}$. This reduces the equality to \eqref{triflippushiter} for $M_i$.

To prove \eqref{azpush}, we simply use the path $V_i$ of Claim \ref{rewritezforiclaim} and apply \eqref{aflippush} twice. We provide an example, for $n=4$ and $i=2$. The target is chosen to be $\tb^R_i$, and the source to be $\sb^R_i$. The flip sequences above the mystery box is $FR_{\tb,i}$, and the flip sequence below it is $FR_{\sb,i}$. \igc{1}{azpushproof1}
Applying \eqref{azpush} twice, we immediately get \eqref{azzcommute}.

To obtain \eqref{asquaredz}, we again use the path $V_i$, and proceed as follows. \igc{1}{asquaredzproof1}
\end{proof}

\begin{cor} (Example: $n=4$). We have the following equality. \igc{1}{tricommutes} \label{tricommutes} \end{cor}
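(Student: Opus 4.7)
The plan is to derive this commutation identity directly from (\ref{azpush}) and its upside-down version, without needing any further graph manipulation. Based on the previous pattern of results, the equality pictured in tricommutes should assert that attaching a thick trivalent vertex $a_i$ on one side of a $z\overline{z}$ (or $\overline{z}z$) sandwich equals attaching it on the other side, i.e.\ that $a_i$ commutes past the idempotent-shaped piece built from $z$.

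First I would apply (\ref{azpush}) to the portion of the diagram where $a_i$ meets $z$. This identity lets me pull the $a_i$ through $z$: on one side of $z$ the thick trivalent vertex sits attached to $t_J$, and on the other side it sits attached to $s_J$, with the rightmost index $i$ sticking out in both cases. The outer flip structure that (\ref{azpush}) produces on the opposite side is exactly what is needed for the second step.

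Next I would flip (\ref{azpush}) upside-down and apply it to the interface between the relocated $a_i$ and $\overline{z}$. Because $\overline{z}$ is literally $z$ read from top to bottom, the vertical flip of (\ref{azpush}) lets me push the $a_i$ the rest of the way through, landing it on the far side of the $z\overline{z}$ configuration with the teal strand emerging in the symmetric position. Composing the two moves yields the two pictured expressions in tricommutes as equal.

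The only potential subtlety is bookkeeping: keeping track of which representative of the consistent family of projectors is on top and bottom, and of the distant-edge isomorphisms used to shuttle between $t^R_J$ and $t^R_{J,i}$ (and similarly for $s_J$). This is routine, because those distant-edge rearrangements are oriented paths and are already absorbed into the definitions of $a_i$ and of the path morphism $z$; so the proof is essentially ``apply (\ref{azpush}) twice and read off the result'' with no further combinatorial input. No new computational obstacle arises beyond those already overcome in proving (\ref{azpush}).
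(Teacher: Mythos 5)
Your proposal is correct and is essentially the justification the paper itself gives first: the corollary is the special case $i=n$ of (\ref{azzcommute}), and (\ref{azzcommute}) is proven in the paper exactly by applying (\ref{azpush}) twice (once right-side up through $z$, once upside-down through $\overline{z}$), which is what you describe. The paper then writes out a second, independent proof ``for practice,'' rewriting $z$ and $\overline{z}$ via the path $V_I$ as in Example \ref{Vdefn}, pushing the trivalent vertex inward with (\ref{triflippush}), and finishing by induction on the number of colors; your route avoids that induction at the cost of invoking the already-established machinery of Claim \ref{rewritezforiclaim} underlying (\ref{azpush}), so both arguments are sound and your bookkeeping remarks about the distant-edge representatives are exactly the point the paper also glosses over.
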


\begin{proof} This is a special case of \eqref{azzcommute}, when $i=n$. \end{proof}
	

We now show that $Z$ is orthogonal to certain aborted versions of $a_i$. This is the first step towards a proof of \eqref{abortedwithZ}, and will also be used to describe some additional properties of $a_i$ below.

\begin{prop} (Example: $n=5$) The maps of Figure \ref{whatkillsfigure} are all zero. \label{whatkillsgeneral} \end{prop}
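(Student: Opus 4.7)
The plan is to imitate the two-move strategy already used for equations (\ref{whatkills}) and (\ref{awrap}): push an aborted $6$-valent vertex through the ``skin'' of one of the standard maps ($z_J$, $a_i$, or $FR_i$) using the push-through lemma (\ref{aflippush}), and then annihilate it either via (\ref{dot6inside}) (when it lands against an honest $6$-valent vertex at the start of a flip) or via (\ref{needle}) (when the maneuver leaves behind an isolated needle between two trivalent vertices).

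First I would perform a reduction to the ``innermost'' case. By the same argument used in Remark \ref{examplesareenough} and in the proof of (\ref{aflippush}), the strands of $t^R_I$ whose indices are distant from the colors participating in the aborted configuration can be harmlessly slid out of the way using the distant sliding property. This reduces each picture in Figure \ref{whatkillsfigure} to the corresponding picture for the sub-parabolic $M_i=\{i,i+1,\ldots,n\}$, so without loss of generality the aborted vertex involves the leftmost color $i$ of the relevant piece --- which is the analogue of the case $i=1$ of (\ref{whatkills}).

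Next, for this reduced case I would expand $z$ via the path $V_J$ as in (\ref{rewritez}), writing $z$ as $z_K$ (for $K$ the sub-parabolic obtained by removing the largest index) followed by the terminating sequence of flips. Applying (\ref{aflippush}) repeatedly pushes the aborted $6$-valent vertex through each flip in turn, just as in the proof of (\ref{whatkills}). For each diagram in the figure, one of two things happens at the end of this pushing process: either the aborted vertex arrives at the very first $6$-valent vertex of the innermost flip, at which point (\ref{dot6inside}) kills the whole composite; or, after combining two adjacent thick trivalent vertices using (\ref{azzcommute}) and (\ref{asquaredz}), one is left with a needle sitting inside the diagram and (\ref{needle}) finishes the job.

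The main obstacle will be purely organizational. Figure \ref{whatkillsfigure} presumably contains several distinct-looking diagrams, and for each one must correctly identify where the aborted vertex ends up after being pushed: against a $6$-valent vertex (killed by (\ref{dot6inside})) or as part of a needle (killed by (\ref{needle})). I would therefore split the proof into cases indexed by the location of the aborted vertex relative to the nearest $a_i$ or $z_J$, handle the two archetypes using (\ref{whatkills}) and (\ref{awrap}) as templates, and use induction on $|J|$ with base case $|J|\le 2$ handled directly by (\ref{dot6inside}) and (\ref{needle}).
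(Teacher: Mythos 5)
There is a genuine gap in your reduction step. Figure \ref{whatkillsfigure} contains, for each $i$, \emph{all} possible abortions of $a_i$, i.e.\ one picture for each $6$-valent vertex of $a_i$ at which one can abort, and the different abortion points within a fixed row are not related to one another by the distant sliding property. For instance, the second picture in the first row is the configuration of (\ref{whatkills}) for $K=\{2,3,4,5\}$ but with additional strands colored $1,2,3,4,5$ present on the right; these extra strands carry colors lying in (or adjacent to) $K$, so they cannot be "harmlessly slid out of the way." Your distant-sliding reduction only accomplishes the passage from row $i$ to row $1$ of a smaller parabolic (which is indeed how the paper handles the rows, via the representative $t^R_{I,i}$), but it does not touch the column direction. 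The ingredient you are missing is the path-independence of $z$: since any oriented path $s_I \downto t_I$ gives the same morphism, one may choose a path whose final portion applies $z_K$ directly beneath the aborted sub-configuration (with the extra strands carried along as spectators), so that each picture factors through the already-proven $K$-case of (\ref{whatkills}). Without this re-routing, your plan of pushing with (\ref{aflippush}) does not obviously terminate correctly: those push lemmas are stated for the intact trivalent/$a_i$ configuration against the specific flip sequence $FR_i$, and for an abortion at an interior $6$-valent vertex they do not by themselves bring the aborted vertex face-to-face with a matching $6$-valent vertex to be killed by (\ref{dot6inside}).

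A secondary sign of the same conflation is your fallback ending via (\ref{azzcommute}), (\ref{asquaredz}) and (\ref{needle}): that needle mechanism is the engine of (\ref{awrap}) and (\ref{killerwrap}) (two copies of $a_i$ wrapped between $z$ and $\overline{z}$), and it plays no role in Proposition \ref{whatkillsgeneral}, where the only kill is (\ref{dot6inside}) applied after the reduction to (\ref{whatkills}). Your instinct to induct on the size of the parabolic is sound and matches the paper's structure, but the inductive step for the columns must be carried by the choice of a convenient oriented path for $z_I$ (Manin--Schechtman path-independence), not by distant sliding.
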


\begin{figure}
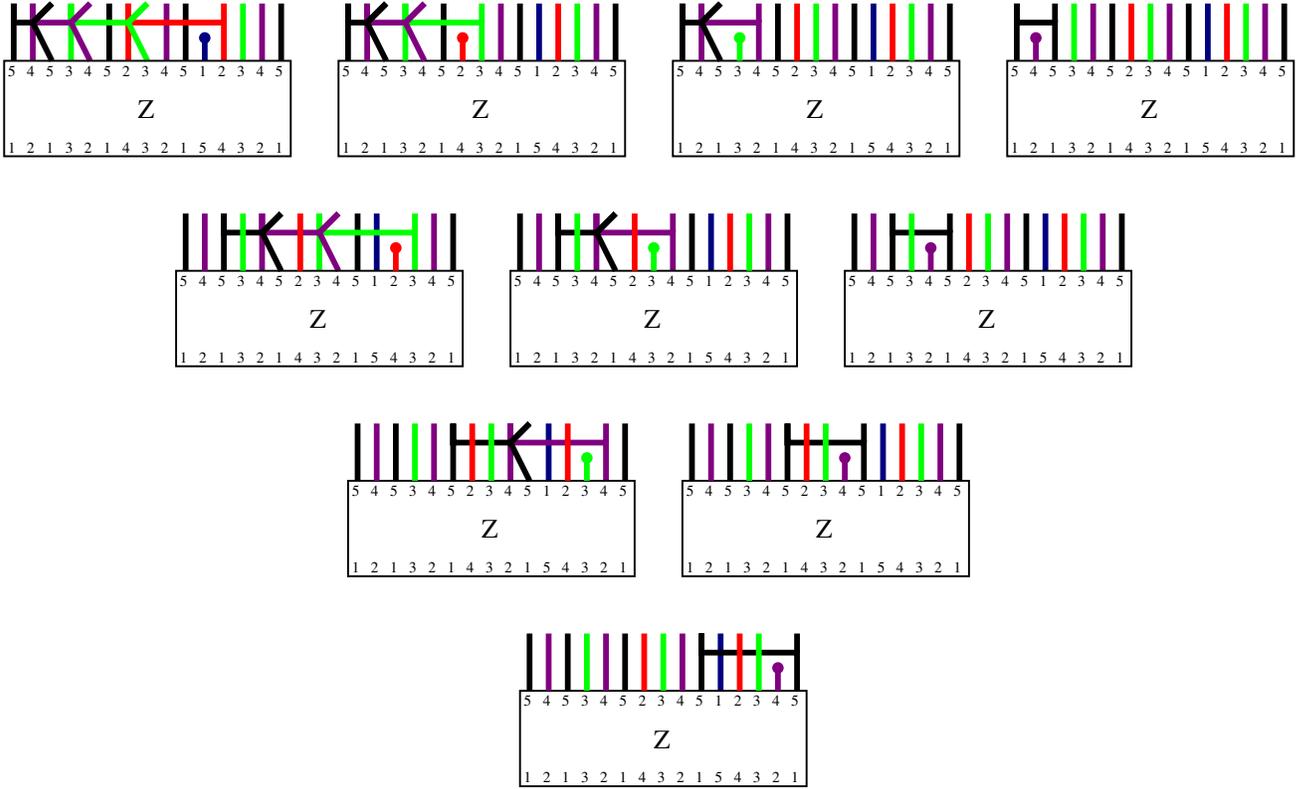
  \igc{.9}{whatkillsgeneral} \caption{The morphisms in Proposition \ref{whatkillsgeneral}} \label{whatkillsfigure} \end{figure}

\begin{remark} To the reader familiar with light leaves from \cite{EWGr4sb} or \cite{LibLL}, this proposition (with the factorization discussed near \eqref{itfactors}) states that $Z$
is orthogonal to certain light leaves of degree zero. It is possible that many statements in this chapter can be streamlined by introducing light leaves, though it is
not clear that the proofs would be. \end{remark}

\begin{proof} This is a generalization of \eqref{whatkills}, and follows quickly from it. The first picture of the first row is precisely that of \eqref{whatkills}, while the second
picture is what \eqref{whatkills} would be for $K=\{2,3,4,5\}$ with additional lines $12345$ added on the right. Choosing a path $\sb \downto \tb$ which ends by applying $V_K$ on the
left, the map is zero by the $K$ case of \eqref{whatkills}. The third picture in the first row is zero by the $K=\{3,4,5\}$ case, and so forth.

For the second row, as for $a_2$, we slide $54321$ to the left and ignore it (that is, we were better off using $\tb^R_2$ instead of $\tb^R$). Then, letting $K=\{2,3,4,5\}$, we are left
with what would be the first row for (\ref{whatkillsgeneral}) for $K$. We leave a more explicit version to the reader. Similarly, for the third row, we slide $543215432$ to the left,
and what remains is the first row for $K=\{3,4,5\}$. The pattern is now clear. \end{proof}

We conclude this section with some further properties of $a_i$, which will simplify our thick calculus in \S\ref{sec-augmented}. The first one states that \eqref{asquaredz} holds true,
even without $Z$ below. The proof of \eqref{asquaredz} used the existence of $Z$ and the path $V_i$ to simplify the proof drastically, while the proof below is an annoying computation.
The same statement is true for the other statements here: they are easier to prove when $Z$ is placed below them, and we leave these easier proofs as exercises for the reader. In fact,
we will only ever care about the maps $a_i$ when they are being post- or pre-composed with $Z$ or $\overline{Z}$. Nonetheless, the properties below mostly hold in the absence of $Z$, so
they might as well be proven there.

\begin{prop} \label{aproperties} The following equations hold, as endomorphisms of (some lift of) $\sb_J$.

For any $i \in J$: \begin{equation} \ig{1}{asquared} \label{asquared} \end{equation}

When teal is $i \in J$ and brown is $i+1 \in J$: \begin{equation} \ig{1}{a6} \label{a6} \end{equation}

When teal is $i \in J$ and brown is $i+1 \in J$ (this \emph{does} require $Z$ below): \begin{equation} \ig{1}{a6z} \label{a6z} \end{equation}

For any $i,j \in J$ distant: \begin{equation} \ig{1}{a4} \label{a4} \end{equation}

For any $i,j \in J$, with no restrictions ($i=j$ is possible): \begin{equation} \ig{1}{aopp} \label{aopp} \end{equation}

For any $i \in J$ (this \emph{does} require $Z$ below): \begin{equation} \ig{1}{adotz} \label{adotz} \end{equation}

These equations (modifying $Z$ and $\overline{Z}$ appropriately) also hold as endomorphisms of (some lift of) $\tb_J$. One should swap \eqref{a6} and \eqref{a6z}.
\end{prop}

We prove these results for a lift of $\sb$. The $\tb$ case is analogous.

\begin{proof}[Proof of \eqref{asquared}] 
This is an application of one-color associativity \eqref{assoc1}, and then repeated uses of two-color associativity \eqref{doubleasspartial}, as
illustrated in the following example of 3 colors. The dotted rectangle surrounds that which is to be changed, and the solid rectangle what it becomes; the final step is just a distant
slide. \igc{1.1}{asquaredproof1}

Clearly, any calculation for $a_3$ reduces to this calculation, regardless of the number of total colors in $J$, because if we choose the expression $\sb^R_{J,3}$ as the top and bottom
of the diargam, then this calculation (tensored on the left with identity maps) will suffice. The calculation clearly generalizes to any number of colors. \end{proof}

\begin{proof}[Proof of \eqref{a6}] This is an application of one-color associativity \eqref{assoc1}, then two-color
associativity \eqref{assoc2}, then repeated uses of three-color associativity \eqref{assoc3}, as illustrated in the following example of 4 colors. \igc{1.1}{a6proof1}
Using the same tricks as the previous proof, we can bootstrap this calculation to deal with any case when teal is $i$ and brown is $i+1$. \end{proof}

In this proof, we repeatedly apply the equality \eqref{assoc3}. If teal and brown were swapped, however, then a similar proof would be forced to use the non-equality \eqref{nonorientednonequal} instead! Unfortunately, when teal and brown are swapped, \eqref{a6} is just false, and one only has the weaker result \eqref{a6z}.

Note, however, that by applying the Dynkin diagram automorphism to the colors above, one obtains the analogous result for $\tb$ and where teal and brown are swapped.

\begin{proof}[Proof of \eqref{a6z}] Use \eqref{azpush} to slide each $a_i$ past $\overline{Z}$, so that they now have top and bottom $\tb$. Then apply the $\tb$ version of \eqref{a6}, and use \eqref{azpush} to slide them back. \end{proof}

\begin{proof}[Proof of \eqref{a4}] Regardless of which distant colors are chosen, this is a simple application of the distant sliding rules. The reader will observe that any interesting
features of $a_i$ will be on strands which are uninteresting in $a_j$, and vice versa (some strands may be uninteresting for both). This should be clear from the following example.
\igc{1.1}{a4proof1} \end{proof}

\begin{proof}[Proof of \eqref{aopp}] First, it is important to note that the map $a^R_i$ going to the right and the map $a^L_j$ going to the left require \emph{different} choices of
representative of $s$, either $s^R$ or $s^L$. To make sense of the equation as, say, endomorphisms of $s^R$, then one must think of $a^L_j$ as first composing by the distant crossings
that bring $s^R$ to $s^L$, applying $a_j$ as normal, and then applying the distant crossings back. As such, this equation can be a pain to write out in general.

Let us do the hardest case: when $i=j=n$ inside $I=\{1,\ldots,n\}$. We will demonstrate the example when $n=5$. Instead of viewing it as an endomorphism of $s^R$ or $s^L$, we view it as
an endomorphism of $121321\ul{4354}23121$ (for the general case, the interesting part in the middle will read $(n-1)(n-2)(n)(n-1)$). Then the reader can observe that the equality may be
written suggestively as \igc{1.1}{aoppproof1} This equality follows immediately from three-color associativity (\ref{assoc3}) applied to the dashed box, and distant sliding rules applied
to the rest. It should be obvious how this example generalizes to any $n \ge 3$. The cases of $n=1,2$ are precisely one-color and two-color associativity.

We leave to the reader the proof that all other cases (for $i<n$ or $j<n$) follow from $i=j=m$ and $J=\{1,\ldots,m\}$ for some $m<n$. \end{proof}

\begin{proof}[Proof of \eqref{adotz}] To prove this, one writes out the map $a$ with a dot, and resolves the dot using \eqref{dot6} into a sum of two diagrams. One of the diagrams will
die when composed with $Z$, thanks to Proposition \ref{whatkillsgeneral}. The other diagram has a dot attached to another 6-valent vertex, so resolves again using (\ref{dot6}) into a
sum of two diagrams. Again, one of the two will die thanks to Proposition \ref{whatkillsgeneral}, and so forth. The single surviving diagram is the identity map. The proof of Lemma
\ref{killerwrapgeneral} has a similar argument which is made pictorial. \end{proof}

\begin{remark} There is a direct proof of \eqref{a6z} analogous to the proof of \eqref{adotz}. One adds a new teal-brown 6-valent vertex to both sides of \eqref{a6}. The doubled
6-valent vertex on one side can be replaced with the identity, modulo an error term has an aborted 6-valent vertex instead. Resolving this term as in the proof of \eqref{adotz}, one
gets zero.

Similarly, there is an indirect proof of \eqref{adotz} similar to the proof of \eqref{a6z} and \eqref{asquaredz}, which slides $a_i$ past flip maps until it becomes a trivalent vertex, and the dot just pulls in by \eqref{unit}. \end{remark}

%
\subsection{Analyzing the path $V$}
\label{subsec-nastiness}
%
%

The goal of this section is to prove \eqref{abortedwithZ} along our chosen path $V$, thereby proving \eqref{extradontmatter} and \eqref{ZZZ}. Let us restate the goal.

\begin{prop} \label{abortedV} Let $Q_\xb$ be the following morphism with bottom boundary $\sb$: one follows the path $V$, but at some vertex $\xb$ along the way, instead of following an
adjacent edge $\xb \downoneto \yb$, one does an aborted 6-valent vertex instead. Then $Q_\xb \overline{Z} = 0$. \end{prop}

\begin{proof} We prove this statement by induction on the number of colors. The base case of two colors follows from \eqref{dot6inside}. So consider the example when $n=5$,
$I=\{1,2,3,4,5\}$ and $K=\{1,2,3,4\}$. Recall that the path $V$ begins with $V_K$, and continues with the flip sequence $FR_{\tb,1}$. If the vertex $\xb$ is chosen so that it is part of
$\overline{V}_K$, then $Q_{\xb}$ is actually an aborted version of $V_K$, and $Q_\xb \overline{Z}$ factors through $Q_{\xb} \overline{Z}_K$. Therefore the product is zero by induction.
So we may assume that the vertex $\xb$ is within the sequence of flips $FR_{\tb,1}$. Therefore, $Q_{\xb} \overline{Z}$ looks like this. \igc{1}{abortedVproof1}

Lemma \ref{FRiabort} below will show that the aborted version of $FR_{\tb,1}$ composed with $Z_K$ is a linear combination of certain diagrams, each of which has a dot on one of the
strands $54321$ which bypass $Z_K$. Actually, the dot is on one of $5432$, and cannot be on the final $1$ strand. Lemma \ref{FRidot} will show that when this dot is pulled downwards
into $\overline{FR}_{\tb,i}$, it resolves into a linear combination of certain diagrams (actually, the same class of diagrams, upside-down). Finally, Lemma \ref{killerwrapgeneral} will
imply that combining any of the diagrams from Lemma \ref{FRiabort} on top with any of the diagrams from Lemma \ref{FRidot} on bottom will yield zero. \end{proof}

It remains to describe this class of diagrams which appears, and prove the lemmas.

\begin{lemma} (Example: $n=5$) Let $c$ be one of the pictures in the table of figure \ref{killerwrapgeneralfig}, and let $b$ be another picture in the same row. For each row, the lines
on the right are $54321$ except with one index conspicuously absent. The following map is zero, where $z=z_K$ for $K=\{1,2,3,4\}$ and $\overline{b}$ is $b$ upside-down.
\igc{1}{killerwrapgeneral} \label{killerwrapgeneral} \end{lemma}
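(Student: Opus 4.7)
The plan is to reduce the lemma to the special cases already handled, namely (\ref{killerwrap}), (\ref{awrap}), and Proposition \ref{whatkillsgeneral}, by repeatedly exploiting the push relations between flip maps, trivalent maps, and $z$.

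First, I would observe that, row by row, the pictures $b$ and $c$ in Figure \ref{killerwrapgeneralfig} are all constructed from fragments of the thick trivalent vertex $a_i$ associated to the unique index $i$ that is missing from the $54321$-strands on the right. Consequently, both $b$ and $c$ can be written as (possibly truncated) $a_i$-shaped maps, plus additional spectator strands that slide harmlessly to the left by the distant sliding property. In other words, we can always assume without loss of generality that we are working with top boundary $t^R_{I,i}$ rather than $t^R_I$, so that the action of $b$ and $c$ is concentrated on the right-hand block $M_i = \{i,\ldots,n\}$. This is exactly the same reduction used in the proof of (\ref{aflippush}) and in the proof of Proposition \ref{whatkillsgeneral}.

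Next, I would use the path $V_I$ of Example \ref{Vdefn} to rewrite $z_I$ as in (\ref{rewritez}), putting a sequence of flip maps at the top. The push relations (\ref{triflippushiter}), (\ref{aflippush}), and (\ref{azpush}) then allow me to slide $c$ upward through these flips, and correspondingly to slide $\overline{b}$ downward through the flips of $\overline{z}$. After these moves, $c$ and $\overline{b}$ come to sit directly next to one another in the middle of the diagram, with the inner copy of $z = z_K$ (for $K=\{1,2,3,4\}$) sandwiched in between, precisely as in the set-ups of (\ref{killerwrap}) and (\ref{awrap}).

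At this point there are two kinds of configurations to consider. If both $b$ and $c$ are fully formed $a_i$ maps, then pushing them together through $z$ using (\ref{azzcommute}) and combining them via (\ref{asquaredz}) produces a needle, which vanishes by (\ref{needle})—this is just the mechanism of (\ref{awrap}). If instead $b$ or $c$ is an aborted variant (a partial $a_i$ whose rightmost strand does not emerge), then after sliding through the flips we land inside one of the pictures already shown to be zero in Proposition \ref{whatkillsgeneral}; equivalently, the aborted 6-valent vertex inside the fragment meets the next 6-valent vertex of the flip and is killed by (\ref{dot6inside}), exactly as in the proof of (\ref{whatkills}).

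The main obstacle is the bookkeeping: the table in Figure \ref{killerwrapgeneralfig} contains several pictures in each row, and one must verify that for every choice of $c$ and $b$ from the same row the two sliding procedures actually deliver us into one of these two endgames. The saving observation is that the row label (the missing index of $54321$) determines the single index $i$ around which everything is organized, and once we have reduced to $t^R_{I,i}$ and pushed through the flips, the only data that matters is whether each of $b,c$ carries the outgoing $i$-strand to completion or aborts it — giving precisely the two cases above. I would present the argument by doing the typical case fully (analogous to the $n=5$, $i=1$ calculation already carried out for (\ref{whatkills})) and indicating that the remaining rows follow by pulling the extra $54321\cdots$ strands to the left.
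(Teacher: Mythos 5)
Your identification of the endgames, (\ref{awrap}) and Proposition \ref{whatkillsgeneral}, is on target, but the route you describe does not match the diagram in this lemma, and the reduction you propose has a genuine gap. Structurally: the composite in question consists only of $c$ on top, the middle $z_K$ (for $K=\{1,2,3,4\}$), and $\overline{b}$ on the bottom, with the $54321$-minus-one strands on the right joining $c$ to $\overline{b}$; there is no outer $z_I$ or $\overline{z_I}$ and there are no flip maps $FR_i$ in the picture, so the plan of rewriting $z_I$ along $V_I$ and sliding $c$ and $\overline{b}$ through flips has nothing to act on. The flips only enter later, in Lemmas \ref{FRidot} and \ref{FRiabort} and Proposition \ref{abortedV}, which is where the present lemma gets applied; the pushes you invoke were already spent in proving (\ref{awrap}) and Proposition \ref{whatkillsgeneral} and are not needed again here.

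The substantive gap is your dichotomy ``fully formed $a_i$ versus aborted variant.'' The non-leftmost pictures in each row of Figure \ref{killerwrapgeneralfig} are not aborted fragments that die against $z_K$: if each of them were individually killed by Proposition \ref{whatkillsgeneral}, the lemma would be trivial for those entries and the table (and Lemma \ref{FRidot}) would be pointless. They are genuinely nonzero maps, and the content of the lemma is that the whole wrapped composite $c\,z_K\,\overline{b}$ vanishes. The missing idea --- which is essentially the entire proof in the paper --- is the precise relation between the entries of a row: the leftmost entry is exactly $a_i$ for the relevant index, and each other entry is obtained from $a_i$ by placing a dot on a specific top strand and resolving via (\ref{dot6}), \emph{modulo} diagrams that factor through maps killed by $z_K$ by Proposition \ref{whatkillsgeneral}. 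Granting that, any choice of $b$ and $c$ from the same row reduces, modulo vanishing terms, to the case $b=c=a_i$ decorated with dots, which is zero by (\ref{awrap}). Your proposal never supplies this dot-resolution step, and with the dichotomy as you state it the argument fails for every pair involving a non-leftmost entry of the table.
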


\begin{figure}
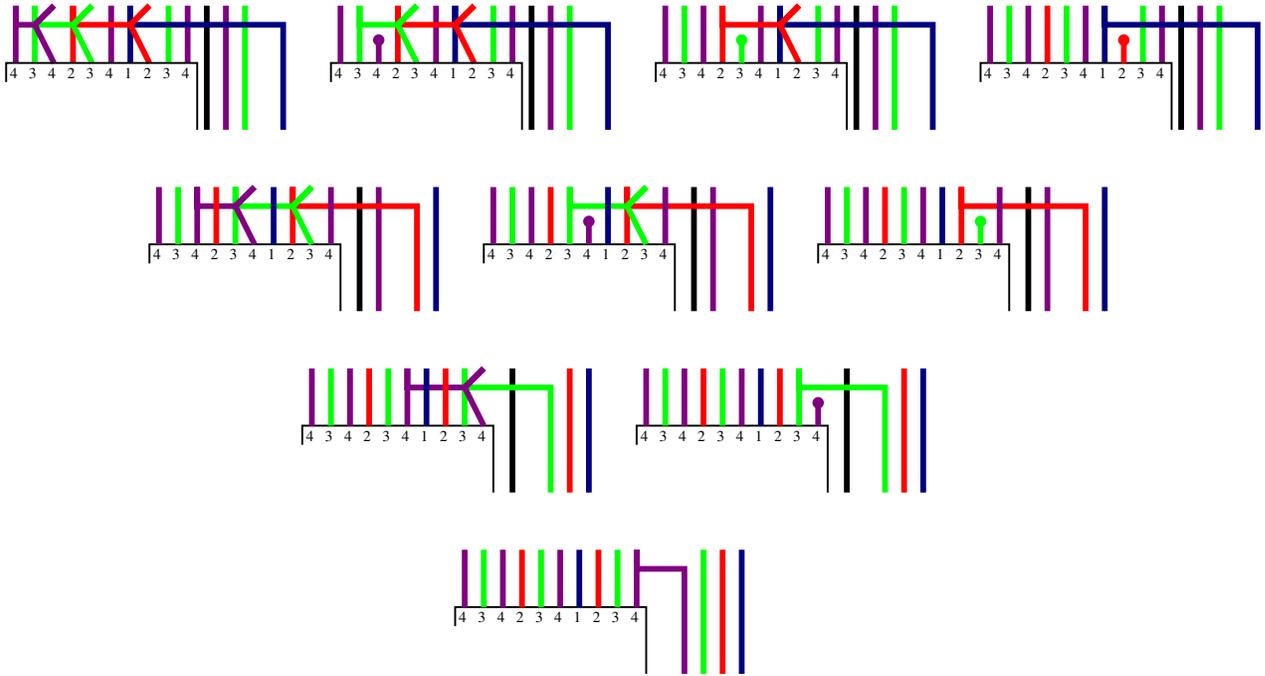
 \igc{.9}{killerwrapgeneraltable} \caption{Morphisms appearing in Lemmas \ref{killerwrapgeneral} and \ref{FRidot}} \label{killerwrapgeneralfig} \end{figure}

\begin{proof} Note that the leftmost picture in each row is precisely $a_i$ for some $i$. If both $b$ and $c$ are chosen to be the leftmost picture in a row, then we can use \eqref{azzcommute} to bring the two copies of $a_i$ together, use \eqref{asquaredz} to combine them, and then use \eqref{needle} to show the result is zero. Now we handle the other diagrams in each row by proving that they are obtained from the leftmost by adding a dot on top.

Consider the top row; other rows are entirely analogous. Consider what happens when we place a dot on the leftmost picture in various places. Place a dot on the 3rd to left strand on top
(the purple strand) and resolve using relation \eqref{dot6}. You get two diagrams, one of which is the second diagram on the first row, the other of which factors through a map which
vanishes when hit with $Z_K$, thanks to Proposition \ref{whatkillsgeneral}. If you place a dot instead on the 5th strand (green) and resolve, and ignore diagrams which vanish due to
Proposition \ref{whatkillsgeneral}, then one is left with the third diagram on the first row. Placing a dot on the 8th strand (red) will give the final diagram on that row. \end{proof}

\begin{example} Placing a dot on the 5th strand and resolving: \igc{1}{killerwrapgeneralproof1} \end{example}

\begin{lemma} (Example: $n=5$) Suppose that we place a dot below $FR_1$ on one of the last strands $54321$, but not on the final strand $1$. If we place a dot on strand $2$, then the
resulting morphism can be rewritten as a linear combination of morphisms which factor through the morphisms on the first row of Figure \ref{killerwrapgeneralfig}. If we place a dot on
strand $3$, then it can be rewritten to factor through the second row in that figure. Etcetera. \label{FRidot} \end{lemma}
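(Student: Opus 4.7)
The plan is to propagate the dot upward through the flip sequence $FR_i$ using the dot-forcing relations, exploiting relation \eqref{dot6} each time the dot encounters a 6-valent vertex. The guiding intuition is that a dot on strand $k$ (among the tail $54321$) must pass through a specific 6-valent vertex belonging to the first flip of $FR_i$ that involves strand $k$; resolving this dot via \eqref{dot6} produces two summands, one of which carries the dot further upward (and iterates), and the other of which replaces that 6-valent vertex by an aborted 6-valent vertex. The aborted summand is then simplified so that it factors through the appropriate picture in Figure \ref{killerwrapgeneralfig}.

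First I would reduce to the case $i = 1$. For $i \geq 2$, the leftmost strands (those indexed by $\{1,\ldots,i-1\}$) do not interact with any 6-valent vertex of $FR_i$, so by the distant sliding property they can be pulled harmlessly to the left. What remains is precisely the $FR_1$ problem inside the sub-Dynkin diagram $M_i = \{i, i+1, \ldots, n\}$, and a dot on strand $k$ there corresponds to a dot on strand $k$ in the original problem. Thus the claim for $FR_i$ follows from the claim for $FR_1$ applied to $M_i$.

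Next, for $FR_1$, I would work one strand at a time, starting from strand $2$ and moving inward (toward the last flip, which is $F_{1,2}$). For strand $k$, locate the first 6-valent vertex of $FR_1$ that strand $k$ enters from below, and apply \eqref{dot6}. In the aborted summand, the aborted vertex encounters the next 6-valent vertex that would begin the subsequent flip; by \eqref{dot6inside} this kills that subsequent flip, and the remnants can be reorganized using distant sliding and the one- and two-color associativities until the result is in a form that factors through the $(k-1)$-st row of Figure \ref{killerwrapgeneralfig}. In the non-aborted summand the dot has moved one step up; iterating the argument, this summand is again split by \eqref{dot6} at the next 6-valent vertex, and so on. After finitely many steps the procedure exhausts the flips involving strand $k$, and every term produced has been shown to factor through the required row.

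The main obstacle is organizational rather than conceptual: one must verify, strand by strand, that the specific aborted configuration produced really does match one of the pictures in row $k-1$ of the table, and that the iteration closes after finitely many \eqref{dot6} applications. In line with Remark \ref{examplesareenough}, I would present the argument by carrying out explicitly the case $n=5$, $FR_1$, for each of the strands $2,3,4,5$ in turn (these four cases exhibit all the patterns that occur), and then note that larger $n$ and the reduction to $M_i$ are strictly analogous.
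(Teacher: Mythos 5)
Your outer strategy (push the dot up through the flip sequence with (\ref{dot6}), flip by flip, and account for the terms that split off at each stage) is the same as the paper's, and the reduction of $FR_i$ to the $FR_1$-type situation is harmless. But the engine you propose for the split-off terms is wrong. Relation (\ref{dot6}) does not produce an ``aborted summand'' that dies against the next flip via (\ref{dot6inside}): when the dot sits on strand $2$, the resolution takes place at 6-valent vertices involving only the colors $1,2,3$ near the top of the first flip, whereas the next flip $F_{2,5}$ opens with a $(4,5)$-colored 6-valent vertex, so the hypotheses of (\ref{dot6inside}) (an aborted vertex meeting a 6-valent vertex in the \emph{same} pair of colors) are never met, and in fact nothing vanishes in this lemma at all. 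Killing terms by (\ref{dot6inside}) or Proposition \ref{whatkillsgeneral} is the mechanism of the neighboring Lemmas \ref{killerwrapgeneral} and \ref{FRiabort} (where a genuine aborted vertex or a $z_K$ is present), not of this one. The entire content of the statement is the positive identification that the surviving terms produced by (\ref{dot6}) \emph{are} (factor through) the successive entries, read right to left, of the appropriate row of Figure \ref{killerwrapgeneralfig}; your sentence ``the remnants can be reorganized\ldots until the result factors through the $(k-1)$-st row'' assumes exactly this and so the proposal never proves it.

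You have also mis-visualized the propagation itself. For $n=5$ the flips in $FR_1$ are $F_{1,5}, F_{2,5}, F_{3,5}, F_{4,5}$ of decreasing size; there is no $F_{1,2}$, and the last flip is $F_{4,5}$. Within the first flip the dot must be resolved by (\ref{dot6}) \emph{twice}, producing two terms matching the rightmost entry of the row plus one term in which a dot of the \emph{next} color (green rather than red, for strand $2$) enters the following flip as its second-from-right input; iterating, the dot changes color at every step and passes through \emph{all} the remaining flips, not just ``the flips involving strand $k$'' (for strand $2$ only the first flip involves that strand, so under your description the procedure would stop after one flip and most of the row's entries would never appear). The process terminates at the smallest flip, where the leftover terms factor through the leftmost entry of the row, which is $a_i$ itself. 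To close the argument you need to carry out this bookkeeping explicitly: at each flip apply (\ref{dot6}) at the two relevant vertices and match each split-off diagram with the specific picture in the row of Figure \ref{killerwrapgeneralfig}, keeping every term; this ``straightforward but annoying'' matching is precisely what the lemma records and what your proposal leaves out.
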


\begin{proof} We shall prove the statement placing a dot on strand $2$, and let the reader prove the remainder as an exercise (it follows by the same arguments). The calculation is
straightforward but annoying. We are trying to understand the morphism \igc{1}{FRidotproof1} The first step is to resolve the rightmost flip as below, using \eqref{dot6} twice.
\igc{1}{FRidotproof2} Having written the final three terms suggestively, we see that the regions below the dotted line are familiar. Note that these diagrams have bottom $1234\
543\hat{2}1$ with a conspicuously absent $2$. Let us compare the part below the dotted line with the part of each diagram in the first row of Figure \ref{killerwrapgeneralfig} which
only involves $1234\ 543\hat{2}1$. In the first two pictures, the region below the dotted line is precisely the rightmost entry in the first row. Thus we need only deal with the third
picture, where the region below the dotted line agrees with the other entries in the first row.

In the third picture, the region enclosed by the dotted box will be passed as input to the next flip map in the sequence. The dot coming into the next flip is green, not red, but it is
still the 2nd to right input (which will remain true as we iterate this procedure). Therefore we may resolve the next flip map in the same way, to yield two diagrams which factor
through the 2nd to right entry of the first row of Figure \ref{killerwrapgeneralfig}, and a final diagram which has a dot entering the next flip in the 2nd to right input. The third
diagram of this iteration is \igc{1}{FRidotproof3} Once again, the region below the dotted line agrees with the $234\ 1234\ 543\hat{2}1$ part of the remaining diagrams on the first row
of Figure \ref{killerwrapgeneralfig}. We may then repeat the argument.

Where the argument stops is when the 2nd to right input of a flip is also the central input of a flip. Equivalently, this is when the dot coming into the next flip is black. (If our
original dot were on strand $3$, we would stop when the 3rd to right input was also the central input of a flip, which is the penultimate flip map.) This black dot enters a black-purple
6-valent vertex. Resolving this using \eqref{dot6}, we obtain two diagrams, both of which factor through a purple trivalent vertex. This, finally, is the purple trivalent vertex appearing in the first entry of the relevant row of Figure \ref{killerwrapgeneralfig}. Both diagrams factor through this first entry. \end{proof}

\begin{lemma} Suppose that we ``abort" $FR_1$. That is, we follow the path $FR_1$ from $\tb$ up through $\Gamma$, but at some vertex $\xb$ along the way, instead of following the next
adjacent edge $\xb \uponeto \yb$, we do an aborted 6-valent vertex instead. The resulting morphism then factors through the morphisms in Figure \ref{killerwrapgeneralfig}, except
instead of having a conspicuously absent index on the right, we have a boundary dot instead. \label{FRiabort} \end{lemma}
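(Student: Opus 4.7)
The strategy is to reduce Lemma \ref{FRiabort} to (a generalization of) Lemma \ref{FRidot} by converting the aborted $6$-valent vertex in the interior of the diagram into a (boundary) dot, then propagating that dot through the surrounding flip maps exactly as in the previous lemma's proof.

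First, I would unpack the aborted $6$-valent vertex. By definition it is the projection $B_{iji} \to B_i \to B_{iji}$ onto the ``wrong'' summand in the decomposition $B_{iji} \cong B_J \oplus B_i$, and in the diagrammatic calculus it admits an explicit expression in terms of two trivalent vertices on the outer $i$-colored strands together with a dot on the middle $j$-colored strand (equivalently, it equals $\id_{iji}$ minus the doubled $6$-valent vertex by (\ref{ipidecomp})). Either presentation exhibits the aborted $6$-valent vertex as a diagram with a dot on an interior strand; using the trivalent associativity (\ref{assoc1}), this dot can be slid to the top boundary of the aborted cell so that the remainder of the diagram consists only of standard morphisms.

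Next, I would propagate this dot downward through the partial $FR_i$ that sits below the abort. Between the location of the abort and $t_J$ we see only $6$-valent vertices (organized into consecutive flip maps). Pushing the dot past each $6$-valent vertex in turn by (\ref{dot6}) yields, at every stage, a sum of diagrams: those in which the dot becomes ``stuck'' against a $6$-valent vertex are precisely the fragments appearing in the pictures of Figure \ref{killerwrapgeneralfig} (with an extra dot replacing the missing index), while the remaining term carries the dot one $6$-valent vertex further down. This is the same inductive bookkeeping used in Lemma \ref{FRidot}: at each flip one invokes (\ref{dot6}) twice (as in the proof of that lemma), discards terms that factor through morphisms killed in Proposition \ref{whatkillsgeneral} after absorbing them into pictures from the table, and carries forward a single ``dotted'' term into the next flip. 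After a finite number of iterations the dot has been forced entirely to the right-hand boundary of $t_J$, proving the claim.

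The main obstacle is the bookkeeping: one has to verify that the strand on which the dot ultimately appears at the boundary is exactly the one corresponding to the row of Figure \ref{killerwrapgeneralfig} indexed by the position of the abort. This requires a careful analysis of which $6$-valent vertex is aborted in $FR_i$ and how the propagation through the intervening flip maps permutes the color of the dotted strand; however, because $FR_i$ is built from nested flips $F_{i,j}$, the pattern of which strand is hit is transparent once one examines a small example in the spirit of Remark \ref{examplesareenough}, and the induction on flip number then runs uniformly. The case when the aborted vertex is the very last $6$-valent vertex in $FR_i$ is essentially immediate, and this serves as the base case of the induction.
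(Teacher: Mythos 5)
Your core mechanism is the paper's: write the aborted 6-valent vertex as a diagram with a dot on the middle strand, resolve that dot down through the flips of $FR_i$ using (\ref{dot6}), and recognize the terms that split off at each 6-valent vertex as (dotted versions of) the pictures in Figure \ref{killerwrapgeneralfig}. But two specifics in your write-up are off. First, Proposition \ref{whatkillsgeneral} plays no role here and nothing may be discarded as zero: Lemma \ref{FRiabort} is a pure factorization statement, made before anything is composed with $z$ or $\overline{z}$, and the pictures in Figure \ref{killerwrapgeneralfig} are not themselves zero (you are conflating them with the genuinely vanishing maps of Figure \ref{whatkillsfigure}; the table pictures only die after being sandwiched with $z_K$ as in Lemma \ref{killerwrapgeneral}, which is what gets used later in Proposition \ref{abortedV}). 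The terms you ``set aside'' must simply be retained as summands that already factor through the table; moreover, when the travelling dot enters the next flip from above, as in (\ref{blargh}), the term that splits off is not directly a table picture but a \emph{lower abortion} of $FR_i$, which has to be handled by an inductive hypothesis rather than by inspection.

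Second, and because of that last point, your induction is set up backwards. The immediate case is aborting at the \emph{first} (bottom-most) 6-valent vertex of $FR_i$, which is on the nose the bottom entry of the table; aborting at the \emph{last} vertex is the hardest case, since its resolution contains inside it terms corresponding to every earlier abortion, so it cannot serve as a base case and an induction descending from it has nothing to stand on. Run the induction from the bottom up: resolve the dot, peel off lower abortions (covered by the inductive hypothesis) and within-flip terms that visibly factor through the rightmost entries of the rows of Figure \ref{killerwrapgeneralfig} with boundary dots, and carry the single remaining dotted term into the next flip. With those two corrections your argument is the paper's proof.
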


\begin{example} Let $n=5$ and let us abort $FR_1$ at the top of the bottommost flip $F_{1,5}$. We ignore the sequence of lines $434234$ appearing on the left of the morphism. Then using
\eqref{dot6} repeatedly we resolve as follows. \begin{equation} \label{hardproofgah} \ig{1}{FRiabortproof1} \end{equation} There are four terms in the end, corresponding to the five
indices in $54321$ but not counting the final $1$. Each term has a dot somewhere in the final string $54321$; remove that dot to leave a conspicuously absent strand instead. Each term
also has a trivalent vertex or cap (the blue trivalent in the first term, the red cap in the second, the green cap in the third, the purple cap in the fourth) which surrounds the
remaining dot, and surrounding a bunch of junk that it slides through. Note that a cap factors through a trivalent vertex, via \eqref{unit}. Thus the result factors through the
rightmost entry of the appropriate row of Figure \ref{killerwrapgeneralfig}.

Suppose we had aborted $FR_1$ inside $F_{1,5}$, but the red-green 6-valent instead of the blue-red 6-valent. This yields a subdiagram of one of the intermediate terms pictured in
\eqref{hardproofgah} (the rightmost diagram on the top row, replacing the red cap with a red trivalent vertex via \eqref{unit}). In general, aborting $F_{1,5}$ on top, at the blue-red
6-valent vertex, is the hardest; aborting $F_{1,5}$ at any of the other 6-valent vertices and resolving will give subdiagrams of some of the four terms obtained in \eqref{hardproofgah}.
These subdiagrams all factor through the rightmost entry of the appropriate row of Figure \ref{killerwrapgeneralfig}. \end{example}

\begin{proof} When we abort $FR_1$ at the very first edge (the bottommost 6-valent vertex in $F_{1,5}$) we get the bottom entry of Figure \ref{killerwrapgeneralfig}. Now we use
induction on which edge we abort at. As can be seen in the example above, when one aborts and then resolves the dot using \eqref{dot6}, there are two terms, one of which factors through
a previous abortion, and the other of which will provide an interesting new term. This is true also if we abort the first vertex in a new flip, for we shall have \begin{equation}
\ig{1}{FRiabortproof2} \label{blargh} \end{equation} where the first term is a previous abortion. At each stage, therefore, we will ignore previous abortions by induction. In doing so,
we only look at one term in each resolution of \eqref{dot6}. So we can deterministically replace our aborted copy of $FR_1$ with another diagram where no dot touches a 6-valent vertex.
There are only two operations one needs to perform: replacing the LHS of \eqref{hardproofgah} with only the first term on the RHS, and replacing the LHS of \eqref{blargh} with only the
second term on the RHS.

It is a simple exercise to confirm that this yields precisely the diagrams in Figure \ref{killerwrapgeneralfig}, with a boundary dot instead of a conspicuously absent strand.

For the example $n=5$, the path $FR_1$ has length $10$, so there are $10$ possible abortions, which we order by the edge aborted. Resolving these modulo previous abortions, we have 10
different diagrams. These factor precisely through the 10 diagrams in Figure \ref{killerwrapgeneralfig}, in the following order: the rightmost column from bottom to top, then the next
rightmost column from bottom to top, and so on. \end{proof}

This concludes the proof of Proposition \ref{abortedV}. Using the arguments of \S\ref{subsec-proofoutline}, we now know that $\phi_J$ is a consistent family of projectors.

%
\subsection{Analyzing the image of the projectors}
\label{subsec-projectors}
%
%
%

The family $\phi_J$ picks out some mutual summand of every reduced expression for $w_J$, living in the Karoubi envelope of $\DC$, which we temporarily call $X$. We wish to show that
this summand is precisely $B_J$ (we have not even shown yet that $X$ is nonzero).

\begin{prop} The $R$-bimodule $\HOM_{\DC}(X,\emptyset)$ is a free left (or right) $R$-module of rank $1$, generated by the map $\xi_J$ of degree $d_J$ which consists of including
$X$ into $B_\xb$ for some reduced expression $\xb$ of $w_J$, and then placing dots on each strand of $B_\xb$. This map is independent of the choice of $\xb$. \label{zHomtoR} \end{prop}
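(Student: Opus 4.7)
The plan is to prove the statement in three ingredients: (i) well-definedness and nontriviality of $\xi_J$; (ii) identification of $[C]$ as $b_J$ in the Hecke algebra via Lemma \ref{isitBJ}; (iii) deduction of free rank one from the Soergel--Williamson pairing. For (i), I would first record the graphical compatibility of an all-dots cap with path morphisms: post-composing a $4$-valent vertex with dots on top returns dots (immediate from the distant sliding property), and post-composing a $6$-valent vertex with dots on top also returns dots (resolve one of the outgoing dots via (\ref{dot6}) and annihilate the trivalent-vertex term using the needle relation (\ref{needle})). Inducting on path length gives $\mathrm{dots}_y \circ \phi_{x,y} = \mathrm{dots}_x$ for any reduced expressions $x, y$ of $w_J$. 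Combined with $i_y = \phi_{x,y} \circ i_x$ from Claim \ref{partialidempotentcompletion}, this yields $\xi_J^{(y)} = \xi_J^{(x)}$. For nontriviality, take $x = s_J$: by Theorem \ref{zidemp}, $\phi_{s_J, s_J} = \overline{z_J} \circ z_J$, so $\xi_J \circ p_{s_J} = \mathrm{dots}_{s_J} \circ \phi_{s_J, s_J} = \mathrm{dots}_{s_J} \ne 0$.

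For (ii), I would use the thick trivalent vertex $a_i$ constructed in subsection \ref{subsec-thicktri} to realize the splitting $C \ot B_i \cong C\{1\} \oplus C\{-1\}$ for each $i \in J$. The candidate orthogonal idempotents in $\End(C \ot B_i)$ are the composition of $a_i$ with its vertical reflection $\overline{a_i}$, and the same diagram decorated by a dot on the dangling $i$-strand; Proposition \ref{aproperties}, in particular relations (\ref{asquared}) and (\ref{adotz}), supplies exactly the identities needed to verify that these are orthogonal idempotents summing to the identity. By Lemma \ref{isitBJ}, $[C] = \lambda b_J$ in $\mH$ for some $\lambda \in \Zvv_{\ge 0}$ equal to the graded multiplicity of $B_J$ among the indecomposable summands of $C$. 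Since $C$ is a direct summand of $B_x$, this multiplicity is bounded by the coefficient of $b_J$ in $[B_x] = b_{i_1} \cdots b_{i_d}$; since every other indecomposable summand $B_y$ of $B_x$ has $[B_y]$ in the $\Zvv$-span of $\{b_z : z < w_J\}$, this coefficient is precisely $1$. Hence $\lambda \in \{0, 1\}$, and the nontriviality $\xi_J \ne 0$ from (i) (together with Krull--Schmidt, which forces $C \ne 0$) rules out $\lambda = 0$; so $\lambda = 1$.

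For (iii), the Soergel pairing formula gives $\grdrk_R \HOM(C, R) = ([C], [R]) = \epsilon(b_J) = v^{d_J}$. Therefore $\HOM(C, R)$ is free of rank one as a left (or right) $R$-module with its generator in degree $d_J$; since $\xi_J$ is a nonzero element of that degree, it lies in the one-dimensional $\Bbbk$-subspace spanned by the generator and is itself a generator, completing the proof. The hardest step is the splitting $C \ot B_i \cong C\{1\} \oplus C\{-1\}$ in (ii): although the thick trivalent vertices $a_i$ were engineered precisely for this purpose, assembling the candidate idempotents from $a_i$, $z_J$, $\overline{z_J}$, and a judiciously placed dot, and then verifying orthogonality and completeness, will require the full suite of relations collected in Proposition \ref{aproperties}.
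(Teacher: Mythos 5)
The central problem is in your step (i): the claimed graphical identity that capping a $6$-valent vertex with dots on all top strands returns the all-dots map on the bottom is false, and the proposed derivation (resolve one dot by (\ref{dot6}) and kill the trivalent term with (\ref{needle})) does not go through. Relation (\ref{dot6}) produces \emph{two} terms, and the extra term does not vanish under further capping by dots. Concretely, for $J=\{i,j\}$ adjacent and $x=y=iji$, your claim specializes to $\mathrm{dots}\circ\phi_{x,x}=\mathrm{dots}$, i.e.\ that the multiplication map $B_iB_jB_i\to R$ kills the complementary $B_i$-summand in (\ref{ipidecomp}); it does not. The complementary idempotent factors as (dot the $j$-strand, merge the $i$-strands, then split and rebirth the $j$), and composing with all dots gives, up to a nonzero scalar, $f_0\,\partial_i(f_1f_2)f_j\,f_3$, which is nonzero (e.g.\ on $1\ot f_i\ot 1\ot 1$). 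Equivalently, $\HOM(B_{iji},B_\emptyset)$ has generators in degrees $1$ and $3$, so the degree-$3$ all-dots map is not rigid: composing with a $6$-valent vertex changes it by a multiple of the degree-$1$ generator. These correction terms die only after precomposition with $z_J$ -- this is exactly what Proposition \ref{whatkillsgeneral} is for, and it is why the paper never asserts $\mathrm{dots}_y\circ\phi_{x,y}=\mathrm{dots}_x$ but instead proves independence of $x$ only \emph{after} establishing rank one, by noting that $\phi_{s_J,x}$ and $z_J$ preserve $1$-tensors and the dots send the $1$-tensor to $1$. As written, both your nontriviality claim and your independence claim rest on the false lemma; both are repairable (nontriviality by the $1$-tensor evaluation, independence by rank-one plus $1$-tensor evaluation), but the repair is precisely the paper's argument, not yours.

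Apart from this, your global route genuinely differs from the paper's and is essentially viable: the paper computes $\HOM(C,B_\emptyset)$ directly and diagrammatically (one-color reduction plus Proposition \ref{whatkillsgeneral}) and only afterwards identifies $[C]=b_J$ in Theorem \ref{theidempotentisright}, using the graded rank just computed to pin down the scalar in Lemma \ref{isitBJ}; you instead establish $C\ot B_i\cong C\{1\}\oplus C\{-1\}$ first (as in Proposition \ref{BJBidecomp}, from Proposition \ref{aproperties} -- no circularity there), pin the scalar down by a multiplicity-one bound for $B_J$ inside $B_x$, and then read off the rank from the Soergel--Williamson pairing. This buys you freedom from the one-color reduction argument, at the cost of importing more of Soergel's classification (that $B_J$ is the indecomposable attached to $w_J$ and occurs in $B_x$ with multiplicity exactly one, via the standard/support filtration), a point you should make explicit rather than gesture at with ``$[B_y]$ in the span of $\{b_z: z<w_J\}$''. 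But the proposal cannot stand until step (i) is replaced by the $1$-tensor argument.
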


\begin{proof} Let $\xi_{\tb}$ denote the map $B_{\tb} \to \emptyset$ which puts a dot on every strand. This will descend to a non-zero map from $B_J$ if and only if $\xi_{\tb} Z$ is nonzero, since $Z$ is a member of our family $\phi$.
	
\begin{example} (Example: $n=4$) The map $\xi$: \igc{1}{xiJ} \end{example}

Consider the functor from diagrammatics to $R$-bimodules defined in \cite{EKho}. It is easy to see that $Z$ preserves 1-tensors, since the same is true of the 6-valent and 4-valent
vertices (we recalled this in \S\ref{subsec-soergeldiagrammatics}). The collection of dots sends $1 \ot 1 \ot \ldots \ot 1$ to $1$. Therefore the composition $\xi_{\tb} Z$ is nonzero,
sending the 1-tensor to $1$.

We know that all Hom spaces in $\SBim$ are free as left (or right) $R$-modules, so it is enough to show that every morphism from $B_{\tb}$ to $\emptyset$, when precomposed with $Z$,
reduces to $\xi_{\tb} Z$ with some polynomial. If we do this, we will show that the space of morphisms is rank 1, and therefore is determined by the image of the 1-tensor from
$B_{\sb}$. For another $\xb \in \Gamma$, letting $\xi_{\xb}$ be the collection of dots, the corresponding morphism from $B_J$ is $\xi_\xb \phi_{\sb,\xb}$. But $\phi_{s_J,x}$ also
preserves 1-tensors, so the overall map sends the 1-tensor to $1$. Thus, the map is independent of the choice of $\xb$, pending the proof that morphisms are rank 1.

(Example: $n=4$) We now use the one-color reduction results of \cite{EKho} (and remind the reader to reread this part of \cite{EKho} for some terminology). Consider an arbitrary
morphism from $B_{\tb}$ to $\emptyset$. First simplify the color $1$, which is an extremal color (in the Dynkin diagram). Since it appears only once in $\tb$, we can reduce the
$1$-colored subgraph to a boundary dot. This boundary dot will not interfere with further simplifications of the diagram. Now consider the color $2$, which is an extremal color in the
remainder of the morphism. There are exactly two instances of $2$ on the boundary, so either they are connected by a line, or they both end in boundary dots. However, if they are
connected in a line then the morphism must factor through one of the pictures which vanishes due to Proposition \ref{whatkillsgeneral} (only the last column is needed). So both
instances of $2$ end in boundary dots, and they will not interfere with further simplification. Now consider the color $3$, which is an extremal color in the remainder of the morphism,
so it must form a disjoint union of simple trees. There are three instances of $3$ on the boundary; call them instance $1,2,3$. If instance $m$ is connected to instance $m+1$ in the
tree, then the morphism factors through a vanishing picture in Proposition \ref{whatkillsgeneral}. If this is not the case, but instance $m$ is connected to instance $m+2$ (in this
case, the only possibility is instance $1$ connected to instance $3$) then instance $2$ ends in a dot. However, we know that \igc{1}{zHomtoRproof1} thanks to \eqref{dotslidesame}.
Therefore, such a morphism factors through one where $m$ and $m+1$ are connected, for some $m$, and vanishes by the above arguments. Thus none of the instances are connected, and they
must all end in boundary dots. (If one does not believe that morphisms must be free $\Z$-modules and is worried about $2$-torsion, one can calculate that this morphism is zero more
directly, without using the \eqref{dotslidesame} trick.) Similarly, considering the color $4$, no two instances of $4$ on the boundary may be connected, since such a morphism will
factor through a morphism where  instances $m$ and $m+1$ are connected, and this morphism will vanish thanks to Proposition \ref{whatkillsgeneral}. Therefore, any morphism which
pairs against $Z$ to be nonzero can be assumed to be $\xi_{\tb}$. \end{proof}

\begin{prop} For any $i \in J$, $X \ot i \cong X(1) \oplus X(-1)$. \label{BJBidecomp} \end{prop}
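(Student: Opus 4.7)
The plan is to exhibit the splitting via explicit projection and inclusion maps built from the thick trivalent vertex $a_i$, and then verify the idempotent decomposition using the relations gathered in Proposition \ref{aproperties}.

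First, I would fix the representative $s_J$ and use the inclusion and projection maps $C \to B_{s_J}$ and $B_{s_J} \to C$ corresponding to the idempotent $\overline{z_J} z_J$. Define a degree $-1$ projection $p_- \colon C \ot B_i \to C$ by pre- and post-composing $a_i \colon B_{s_J} \ot B_i \to B_{s_J}$ with the inclusion tensored by $\id_{B_i}$ on one side and the projection on the other, and a degree $+1$ projection $p_+ \colon C \ot B_i \to C$ by the same recipe but with a boundary dot attached to the teal strand. Let $\iota_\pm$ be the corresponding upside-down analogs $C \to C \ot B_i$.

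Next, I would verify the unit relations $p_\pm \iota_\pm = \id_C$. The equation for $p_+ \iota_+$ is essentially the content of (\ref{adotz}), which says that $a_i$ with a dot on the teal strand becomes the identity upon composition with $z_J$. The equation for $p_- \iota_-$ follows from (\ref{asquared}), which evaluates the doubled thick trivalent vertex in terms of simpler pieces, once the sandwich $\overline{z_J} z_J$ absorbs everything. Orthogonality $p_- \iota_+ = 0 = p_+ \iota_-$ can be checked on degree grounds using Proposition \ref{zHomtoR}: these compositions are degree $\pm 2$ endomorphisms of $C$ that reduce via the $a_i$-relations to configurations with boundary dots incompatible with the rank-one generator $\xi_J$ of $\HOM(C, R)$.

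The main obstacle is the completeness relation $\iota_- p_- + \iota_+ p_+ = \id_{C \ot B_i}$, the thick analog of (\ref{iidecomp}). My plan is to reduce this to a one-color calculation inside $B_{s_J} \ot B_i$. After first adjusting, using the consistent family of projectors together with the paths $FR_i$ of Section \ref{subsec-longest}, to a representative in which the color $i$ occurs at the right end of $s_J$ adjacent to the extra $B_i$, one applies the ordinary identity decomposition (\ref{iidecomp}) to the two rightmost $i$-strands. Pushing this two-term decomposition through the idempotent $\overline{z_J} z_J$ via the propagation identities (\ref{aflippush}) and (\ref{azpush}) then identifies each summand with the corresponding image of $\iota_\pm p_\pm$, completing the proof.
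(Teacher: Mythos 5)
Your proposal is correct and is essentially the paper's own argument: the paper proves this by writing $\id_{C \ot B_i}$ as a sum of two orthogonal idempotents built from the thick trivalent vertex $a_i$ and dots, in analogy with (\ref{iidecomp}), with all verifications done via Proposition \ref{aproperties} (the paper compresses this into a single displayed graphical calculation rather than spelling out the unit, orthogonality, and completeness checks as you do). Just take care that the degree $+1$ projection and its inclusion carry the same dot/polynomial corrections as in the one-color decomposition (\ref{iidecomp}), which your completeness step already implicitly supplies.
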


\begin{proof} Note that \igc{1}{BJBidecompproof1} In this calculation, the box labeled $J$ just represents some appropriate projector for $X$. This calculation, proven using Proposition
\ref{aproperties}, decomposes the identity of $X \ot i$ into two orthogonal idempotents, in a similar fashion to \eqref{iidecomp}. Checking that they are orthogonal idempotents is also
easy with Proposition \ref{aproperties}. \end{proof}

Putting these propositions together, we have now proven Theorem \ref{MainThm}, as outlined in \S\ref{subsec-proofoutline}.

\section{Thick calculus}
\label{sec-augmented}
%
%
%

Recall some notation: when $J$ is a connected parabolic subset, $W_J$ is its parabolic subgroup, $w_J$ is the longest element of $W_J$, and $\tilde{\Gamma}_{w_J}$ is the set of reduced
expressions for $w_J$. The length of $w_J$ is $d_J$.

In the previous chapter we constructed a family of maps $\phi_J = \{ \phi_{\xb,\yb} \}$ in $\DC$ associated to pairs $\xb,\yb \in \tilde{\Gamma}_{w_J}$, when $J$ is connected. We proved
in Theorem \ref{MainThm} that these maps formed a compatible system of projectors, in the sense of Definition \ref{defn:consistentfamily}. Thus they pick out a single summand in the
Karoubi envelope of $\DC$, a summand we abusively call $J$. We may now use Claim \ref{partialidempotentcompletion} to extend the graphical calculus for $\DC$ into a graphical calculus
for $\fooDC$, the partial idempotent completion which adds the new object $J$ for each connected parabolic subset. We have also shown in Theorem \ref{MainThm} that the
functor from $\DC$ to $\BSBim$, extended to the Karoubi envelopes, sends $J$ to $B_J$. Thus $\fooDC$ and $\fooBim$ are equivalent.

\begin{defn} Let $\fooDC$ be the graded monoidal category presented diagrammatically as follows. Objects are sequences $\JJ = J_1 J_2 \ldots J_d$ of connected subsets of
$I=\{1,\ldots,n\}$. When $J=\{j\}$ is a single element, we write the element $j$ instead of writing $J$, and identify it with an object in $\DC$. We draw the identity of $J$ as follows: \igc{.6}{newline}

The generating morphisms are the usual generators of $\DC$, in addition to \emph{$J$-inclusions} and \emph{$J$-projections}. The $J$-inclusion is a map from $J$ to $\xb$ where $\xb$
is any reduced expression for $w_J$. The $J$-projection is a map in the other direction. Both have degree $0$. \igc{1}{inclusionprojection}

The relations are those relations of $\DC$ as well as

\psfrag{psphiyx}{$\displaystyle \phi_{\yb,\xb}$}
\begin{equation} \ig{1}{projinc} \label{projinc}. \end{equation}
\end{defn}

\begin{thm} This diagrammatic category is equivalent to the partial idempotent completion of $\DC$ by the images of $\phi_J$. The functor $\FC$ from $\DC$ to $\BSBim$ extends to a functor from $\fooDC$ to $\fooBim$, which is an equivalence when $\FC$ is an equivalence. \label{defnfooworks} \end{thm}

\begin{proof} As discussed above, this follows from Claim \ref{partialidempotentcompletion} and Theorem \ref{MainThm}. \end{proof}

Let us mention where the functor from diagrams to bimodules sends the $J$-inclusions and $J$-projections. These are degree zero maps, determined uniquely up to an invertible scalar. We
choose the scalars so that the $J$-projection map sends $1 \ot 1 \ot \ldots \ot 1 \in B_{\xb}$ to $1 \ot 1 \in B_J$ for any $\xb$, and so that the $J$-inclusion map sends $1 \ot 1 \in
B_J$ to $1 \ot 1 \ot \ldots \ot 1 \in B_{\xb}$. Since the transition maps $\phi_{\xb,\yb}$ send 1-tensors to 1-tensors, this system of scalars is consistent and the functor is
well-defined.

Note that the image of $B_J$ in $B_\xb$ is spanned by tensors of the form $f \ot 1 \ot 1 \ot \ldots \ot 1 \ot g$, which we call \emph{extremal tensors}. In other words, the image is
generated as a bimodule by the 1-tensor. Since any transition map $\phi_{\xb,\yb}$ factors through $B_J$, it must send arbitrary tensors to extremal tensors. To determine what the
$J$-projection map will do to an arbitrary tensor $f_1 \ot f_2 \ot \ldots \ot f_d$, we may apply any transition map $\phi_{\xb,\yb}$ (which is explicit, albeit annoying to
compute) to obtain an extremal tensor, and then map the extremal tensor to $B_J$ by the rule $f \ot 1 \ot \ldots \ot 1 \ot g \mapsto f \ot g$. However, we have not produced anything
resembling a closed formula for the $J$-projection.

We have proven in Proposition \ref{phiunchanged} that, composing $\phi_{\xb, \yb}$ with a 6-valent or 4-valent vertex $\yb \to \zb$ produces $\phi_{\xb,\zb}$. The implications in $\fooDC$ are the following two relations.
\begin{equation} \ig{1}{suckin6} \label{suckin6} \end{equation}
\begin{equation} \ig{1}{suckin4} \label{suckin4} \end{equation}
Recall that $\phi_{\xb,\yb}$ is constructed only out of 4-valent and 6-valent vertices, so that it also ``sucks in" to the $J$-inclusion, a fact which we also know from \eqref{projinc}. Another implication is that a $J$-inclusion is orthogonal to an aborted 6-valent vertex, or any of the diagrams in Proposition \ref{whatkillsgeneral} which kill the morphism $Z$.

As discussed in \S\ref{subsec-thickening}, the diagrammatic calculus for $\fooDC$ can be improved by introducing new pictures which represent maps that can already be obtained from the
generators above. This ``augmentation'' of the diagrammatics makes many statements more intuitive. We now introduce new pictures one at a time with their defining relation, and discuss
their properties.

Any relations between these new pictures must be checked in $\DC$ using the relations already mentioned. This can be done explicitly, using the calculations of \S\ref{sec-calcs} or
analogous computations. Some of these checks are huge pains in the neck though. Thankfully, Theorem \ref{defnfooworks} implies that we already know the graded dimension of Hom spaces
between objects in $\fooBim$, and we have a functor to $R$-bimodules. Thus, if we have two different pictures which represent a map of a certain degree, and the dimension of the Hom
space in that degree is 1, then they are equal up to a scalar, and the scalar may often be quickly checked by looking at what the map does to a 1-tensor. This saves a great deal of work.

When calculating these graded dimensions, just remember that $\epsilon(b_J)=v^{d_J}$, $b_J^2 = \qJ b_J$, and $b_J b_i = \qtwo b_J$ when $i \in J$. For instance, $\END(B_J)$ has graded
dimension $v^{d_J} \qJ$, so it has a 1-dimensional space of degree $0$ morphisms, generated by the identity map.

\begin{defn} The \emph{thick cup} and \emph{thick cap} express the biadjointness of $J$ with itself. \begin{equation} \ig{.8}{defthickcupcap} \label{defthickcupcap}
\end{equation} \end{defn}

Thanks to the self-biadjointness of $B_i$, we know that $B_{\ii}$ is biadjoint to $B_{\omega(\ii)}$ (recall that $\omega$ reverses the order of a sequence). Consequently, $B_v$ is
biadjoint to $B_{v^{-1}}$ for an arbitrary element $v \in W$. The longest element $w_J$ of a parabolic subgroup is an involution, so $B_J$ should be self-biadjoint. The thick cups and
caps realize this biadjunction. Note that the thick cups and caps are well-defined because for any reduced expression $\xb$ of $w_J$, $\omega(\xb)$ is also a reduced expression for
$w_J$. It is straightforward to check the biadjointness relation directly.

\begin{equation} \ig{.8}{thickbiadjoint} \label{thickbiadjoint} \end{equation}

\begin{defn} The \emph{thick dot} is obtained by choosing a reduced expression $\xb$ for $w_J$ and composing $J \to \xb \to \emptyset$, where the latter map consists of a dot on
every strand. \begin{equation} \ig{1}{defthickdot} \label{defthickdot} \end{equation} \end{defn}

This was the map called $\xi_J$ in Proposition \ref{zHomtoR}. From that proposition we have:

\begin{claim} The map above is nonzero, and independent of the choice of $x$, so it is well defined. It is the generator of $\Hom_{\fooDC}(J,\emptyset)$ as an $R$-bimodule. \end{claim}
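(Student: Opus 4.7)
The plan is to derive this claim directly from Proposition \ref{zHomtoR} combined with the identification $C \cong B_J$ established in Theorem \ref{theidempotentisright}, rather than re-proving anything from scratch. Unwinding definition (\ref{defthickdot}), the thick dot is the composition of the inclusion $B_J \hookrightarrow B_x$ (which in $\mB_{\BS}$ is realized by the projector $\phi_{x,x}$ that cuts out $B_J$ as a summand) with the morphism $B_x \to B_{\emptyset}$ given by placing a dot on every strand. But this is verbatim the morphism $\xi_J$ of Proposition \ref{zHomtoR}.

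First I would quote Proposition \ref{zHomtoR} to obtain three things at once: that $\xi_J$ is nonzero (via the 1-tensor computation, where $z_J$ and any transition $\phi_{x,y}$ preserve the 1-tensor, and the collection of dots sends the 1-tensor to $1$); that $\xi_J$ is independent of the choice of reduced expression $x$ for $w_J$ (two such candidates differ by precomposition with a transition map $\phi_{x,y}$, and agree on the 1-tensor, which determines an element of a rank-one module); and that $\HOM_{\mB}(B_J, B_{\emptyset})$ is a free left (or right) $R$-module of rank one, generated by $\xi_J$.

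Alternatively, the rank statement can be seen on the decategorified side: by the pairing formula recalled just before Proposition \ref{zHomtoR}, the graded rank of $\HOM_{\mB}(B_J, B_{\emptyset})$ equals $(b_J, 1) = \epsilon(b_J) = v^{d_J}$, so the Hom space is one-dimensional over $R$ and concentrated so that its generator sits in degree $d_J$, matching the degree of the thick dot.

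The only potential obstacle would be to verify that the definition (\ref{defthickdot}) does not depend on the chosen expression $x$ used to draw it, but this is precisely the independence clause already proven in Proposition \ref{zHomtoR}, so there is nothing substantively new to do here beyond quoting the earlier work.
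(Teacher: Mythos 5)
Your proposal is correct and is essentially the paper's own argument: the paper simply observes that the thick dot is the map $\xi_J$ of Proposition \ref{zHomtoR} and reads off the nonvanishing, independence of $x$, and the rank-one generator statement from that proposition. Your extra remark computing the graded rank from the pairing $(b_J,1)=\epsilon(b_J)=v^{d_J}$ is a harmless consistency check, consonant with how the paper verifies other relations in this section, but not needed beyond the citation.
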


The definition of an upside-down thick dot is the same, only flipped upside-down. The cyclicity of thick dots is quite clear.

\begin{equation} \ig{.8}{thickdotcyclic} \label{thickdotcyclic} \end{equation}

\begin{defn} The \emph{thick crossing} exists only when $J$ and $K$ are \emph{distant}, that is, every index inside them is distant. It agrees with the usual 4-valent crossing when
$J=\{j\}$ and $K=\{k\}$. \begin{equation} \ig{1}{defthickcrossing} \label{defthickcrossing} \end{equation} \end{defn}

This crossing gives the isomorphism $J \ot K \cong K \ot J$ for $J,K$ distant. There will be distant sliding rules, which we will present in full after all the new pictures have
been presented. Once again, it is obvious that the thick crossing is cyclic.

\begin{equation} \ig{.8}{thickcrossingcyclic} \label{thickcrossingcyclic} \end{equation}

\begin{defn} The \emph{thick trivalent vertex} exists only when $i \in J$. It agrees with the usual trivalent vertex when $J$ is a singleton. Thick trivalent vertices may be
\emph{right-facing} as in the picture below, or may be \emph{left-facing} (sending the extra index $i$ off to the left). For the definition of $a_i$, see \S\ref{subsec-thicktrivalent}. \begin{equation} \ig{1}{defthicktri} \label{defthicktri}
\end{equation} \end{defn}

The utility of these maps has already been seen in the proofs of the previous section. These relations rephrase Proposition \ref{aproperties}.

\begin{equation} \ig{.8}{thicktridot} \label{thicktridot} \end{equation}
\begin{equation} \ig{.8}{thicktriass} \label{thicktriass} \end{equation}
\begin{equation} \ig{.8}{thicktri6} \label{thicktri6} \end{equation}
\begin{equation} \ig{.8}{thicktri4} \label{thicktri4} \end{equation}
\begin{equation} \ig{.8}{thicktriopp} \label{thicktriopp} \end{equation}
Note that \eqref{thicktri6} only makes sense when the teal and brown indices are adjacent, and \eqref{thicktri4} only when they are distant.

In addition, a quick comparison of $a_i$ when expressed on the right for $s^R_J$ and on the left for $t^L_J$ will convince the reader of the cyclicity of the thick trivalent vertex.

\begin{equation} \ig{.8}{thicktricyclic} \label{thicktricyclic} \end{equation}

Rephrasing the isomorphism $B_J \ot B_i \cong \qtwo B_J$ in terms of thick trivalent vertices is easy given from Proposition \ref{BJBidecomp}. We leave it to the reader to prove that
the thick trivalent vertex, viewed as a morphism $J \ot i \to J$, is the unique morphism of degree $-1$ up to scalar, and after applying the functor to bimodules, it sends $f \ot g \ot
h \mapsto f \ot \partial_i(g)h$. (Hint: use a reduced expression for $w_J$ ending in $i$.)

Now, we wish to give a diagrammatic relation corresponding to the isomorphism $B_J \ot B_J \cong \qJ B_J$, and so we will need a number of projection and inclusion maps between $B_J \ot
B_J$ and $B_J$ of various different degrees. We construct these in a fashion entirely akin to the isomorphism $B_i \ot B_i \cong \qtwo B_i$. First, we find the projection/inclusion
of minimal degree, and draw it as a trivalent vertex.

\begin{defn} The \emph{very thick trivalent vertex} is contructed as follows.  Rotate the $J$-inclusion for $\xb$ by 90 degrees, and then connect the output sequence $\xb$ to another $J$-colored strand by a sequence of thick trivalent vertices. There are $d_J$ thick trivalent vertices, so this morphism has degree $-d_J$.
\begin{equation} \ig{1}{defverythicktri} \label{defverythicktri} \end{equation} \end{defn}

This map does not depend on the choice of $\xb$. If we combine \eqref{thicktri6} with \eqref{suckin6}, or \eqref{thicktri4} with \eqref{suckin4}, we can alter $\xb$ by applying any
braid relation.

Moreover, it is true, but not immediately obvious, that the morphism is cyclic. We show this via the functor to bimodules. Recall that there is a unique morphism of degree $-d_J$ from
$B_J \to B_J \ot B_J$ up to scalar, and similarly from $B_J \ot B_J \to B_J$. Consider the map $\ig{1}{verythicktritest}$. It is easy to observe that this map sends the 1-tensor to the
1-tensor, and so does its horizontal reflection, so the map is equal to its horizontal reflection.

\begin{defn} Let $\ii$ be a reduced expression of $w_J$. Then $\partial_{\ii} \define \partial_{i_1} \ldots \partial_{i_{d_J}}$, the composition of Demazure operators, is independent of
the choice of reduced expression. The result is a degree $-2d_J$ map from $R \to R^J$ which is $R^J$-linear. It is denoted $\partial_J$ and is also called a \emph{Demazure operator}.
\end{defn}

Consider $\igv{1}{verythicktritest}$. This map sends $f \ot g \ot h \mapsto f \ot \partial_J(g) h$, and so does its horizontal reflection, so the two maps are equal. These two horizontal reflection equalities are sufficient to prove the cyclicity of the very thick trivalent vertex.

\begin{equation} \ig{1}{verythicktricyclic} \label{verythicktricyclic} \end{equation}

Before we go any further, we digress to display the \emph{thick distant sliding property}. Everything other than $J$ is assumed to be distant from $J$. The corresponding calculations in
$\DC$ are all trivial, coming from the usual distant sliding property.

\begin{equation} \ig{1}{thickdistantslidingproperty} \label{thickdistantslidingproperty} \end{equation}

Now we investigate the very thick trivalent vertex and the thick trivalent dot more thoroughly. The first statement is that they give $J$ the structure of a Frobenius algebra object in
$\fooDC$. In addition to the cyclicity properties already mentioned, this requires only two more relations.

\begin{equation} \ig{.8}{thickunit} \label{thickunit} \end{equation}
\begin{equation} \ig{.8}{thickassoc} \label{thickassoc} \end{equation}

These are easy to check. For both relations, the equalities must be true up to scalar by a calculation of the graded dimension of Hom spaces, and the scalar is $1$ because both sides
send a 1-tensor to a 1-tensor (this is true of the appropriate twist of (\ref{thickassoc})). Here is another ``unit" relation, checked in a similar way:

\begin{equation} \ig{.8}{funkyunit} \label{funkyunit}. \end{equation}

Before we go on, it will be useful to make a brief aside about Frobenius algebras and Demazure operators. We can see that $B_J$ is a Frobenius algebra object in $\fooBim$, but this is
because $R$ is a (symmetric) Frobenius algebra over $R^J$. In particular, the Demazure operator $\partial_J$ is the nondegenerate trace, so that there is some set of polynomials
$\{g_r\}$ which is a basis for $R$ as an $R^J$-module (free of graded rank $\qJ$), and some other basis $\{g^*_r\}$ as well, for which $\partial_J(g_r g^*_q) = \delta_{r,q}$. These
polynomials are indexed by $r \in W_J$, and $g_r$ has degree $2l(r)$, while $g^*_r$ has degree $2(d_J - l(r))$. We may as well assume that $g_1 = g^*_{w_J} = 1$.

\begin{claim} The element $\beta=\sum_{W_J} g_r \ot g^*_r \in B_J$ satisfies $f \beta=\beta f$ for any $f \in R$. \end{claim}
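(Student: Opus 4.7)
The plan is to prove this directly from the Frobenius/dual-basis property, without any diagrammatics. The key algebraic input is the dual-basis expansion: for any $h \in R$, because $\partial_J(g_r g^*_s) = \delta_{r,s}$ and $\partial_J$ is $R^J$-linear, we have
\begin{equation*}
h \;=\; \sum_{r \in W_J} g_r \, \partial_J(h\, g^*_r) \;=\; \sum_{r \in W_J} g^*_r \, \partial_J(h\, g_r),
\end{equation*}
where the coefficients $\partial_J(h g^*_r),\, \partial_J(h g_r)$ lie in $R^J$ and may be moved across the tensor in $B_J = R \otimes_{R^J} R$. (The two forms of the expansion agree because $R$ is commutative, so the placement of the $R^J$-coefficient is immaterial.) This is exactly the dual-basis identity from the definition of a Frobenius extension.

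Given this, I would compute $f \cdot \beta$ by expanding $f g_r$ on the left tensor factor using the first form of the expansion:
\begin{equation*}
f \cdot \beta \;=\; \sum_r (f g_r) \otimes g^*_r \;=\; \sum_{r,s} \partial_J(f g_r g^*_s)\, g_s \otimes g^*_r.
\end{equation*}
Then I would move the scalar $\partial_J(f g_r g^*_s) \in R^J$ across the tensor:
\begin{equation*}
\;=\; \sum_s g_s \otimes \sum_r g^*_r\, \partial_J(f g_r g^*_s).
\end{equation*}
Using commutativity of $R$, the scalar inside equals $\partial_J((g^*_s f)\, g_r)$, so the inner sum is the second form of the expansion applied to $h = g^*_s f$, giving $g^*_s f$. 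Thus
\begin{equation*}
f \cdot \beta \;=\; \sum_s g_s \otimes (g^*_s f) \;=\; \beta \cdot f,
\end{equation*}
which is the desired identity.

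There really is no obstacle here: the only content is the dual-basis identity, which is built into the definition of $\{g_r\}, \{g^*_r\}$, plus the fact that we are tensoring over $R^J$ (which is what lets us transport the $\partial_J$-coefficients across the tensor) and that $R$ is commutative (which lets the two forms of the expansion interchange). The only thing to double-check is that $\partial_J$ really is a non-degenerate trace giving such dual bases; this is a standard fact about the polynomial extension $R^J \subset R$ (indexed by $W_J$, with the degree bookkeeping as stated), and the author has already cited it in the paragraph preceding the claim.
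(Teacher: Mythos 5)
Your computation is correct: the dual-basis expansions $h=\sum_r g_r\,\partial_J(h g^*_r)=\sum_r g^*_r\,\partial_J(h g_r)$ follow exactly as you say from $R^J$-linearity of $\partial_J$ and the fact (stated in the paper) that both $\{g_r\}$ and $\{g^*_r\}$ are $R^J$-bases, and moving the $R^J$-valued coefficients across $\otimes_{R^J}$ plus commutativity of $R$ gives $f\beta=\beta f$ with no gaps. For comparison: the paper does not write out a proof at all --- it invokes this as a standard fact about (symmetric) Frobenius extensions, and offers the alternative characterization of $\beta$ as the image of $1\in R$ under the (upside-down) thick dot, which is an $R$-bimodule map $R\to B_J$, so centrality of $\beta$ is immediate from centrality of $1$ in the bimodule $R$. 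That route is slicker at the level of the commutation identity, but it shifts the work to identifying the image of $1$ under that map with $\sum_r g_r\ot g^*_r$, which again comes down to the same dual-basis manipulation; your argument is the self-contained algebraic version of the "standard fact" the paper is citing, and is a perfectly valid substitute.
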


This is a standard fact about Frobenius algebras. One obtains the same element $\beta$ regardless of which basis and dual basis we choose. Alternatively, we may define $\beta$ as the
image of $1 \in R$ under the thick dot.

\begin{remark} In type A, there is a nice choice of basis and dual basis known as \emph{Schubert polynomials}. The advantage of Schubert polynomials in type A is that one may choose
other Schubert polynomials to form the dual basis. However, we never make any assumptions about the choice of basis in this paper. \end{remark}

The internet contains numerous easily-obtained resources on Frobenius algebras and Schubert polynomials.

As discussed above, the very thick trivalent vertex, seen as a map from $R \ot_{R^J} R \ot_{R^J} R \{-2d_J\} =B_J \ot B_J \to B_J$, applies the Demazure operator $\partial_J$ to the
middle term, while seen as a map from $B_J \to B_J \ot B_J$, it adds $1$ as the middle term. We now list some relations which can be deduced from the previous discussion without too
much effort, and are obvious generalizations of the one-color relations for $\DC$. Recall that a box with $f \in R$ inside represents multiplication by $f$.

\psfrag{psf}{$f$}
\psfrag{psdJf}{$\partial_J(f)$}
\psfrag{psSigma}{\large{$\sum$}}
\psfrag{psgr}{$g_r$}
\psfrag{psgdualr}{$g^*_r$}
\begin{equation} \ig{1}{thickcoxeter} \label{thickcoxeter} \end{equation}
\begin{equation} \ig{1}{thickbroken} \label{thickbroken} \end{equation}
\begin{equation} \ig{1}{thickthickidemp} \label{thickthickidemp} \end{equation}	

The last relation expresses the decomposition $B_J \ot B_J \cong \qJ B_J$.

We make one further extension of the calculus.

\begin{defn} The \emph{generalized thick trivalent vertex}, with top and bottom boundary $J$ and right boundary $K$ for $K \subset J$, is constructed in the same fashion as the very
thick trivalent vertex, and generalizes both the very thick trivalent vertex and the thick trivalent vertex. \begin{equation} \ig{1}{defgenthicktri} \label{defgenthicktri}
\end{equation} \end{defn}

The reader should easily be able to generalize the previous discussion to this new picture. In particular, it is cyclic, satisfies various associativity properties akin to
(\ref{thicktriass}) and (\ref{thicktriopp}), a unit axiom akin to (\ref{thickunit}) or (\ref{thicktridot}), and another one akin to (\ref{funkyunit}). It has distant sliding rules,
applies the Demazure operator $\partial_K$ to the middle term in $B_J \ot B_K$, and can be used to express the isomorphism $J \ot K \cong \qK J$.

\begin{remark} In fact, the category $\fooDC$ is generated entirely by generalized thick trivalent vertices, thick dots, and thick crossings. It is not too hard to see that the other
maps in $\fooDC$, namely the 6-valent vertex and the $J$-projections and $J$-inclusions, can actually be constructed out of these. Below, the thick line represents $J=\{1,2\}$.
\igc{1}{makingothergenerators} \end{remark}

There are presumably many more interesting relations to be found, but these shall be good enough for now. Finding the relations which intertwine thick lines in interesting ways is a more
difficult problem.

\begin{remark} \label{disconnectedremark} We have chosen to define thick strands only for connected parabolic subsets $J$. Suppose $J$ is a \emph{disconnected} parabolic subset, so $J =
J_1 \coprod \cdots \coprod J_k$ for connected, mutually distant parabolic subsets $J_i$. Thus $W_J = W_{J_1} \times \cdots \times W_{J_k}$, $w_J$ is the product of the various
$w_{J_i}$, and $B_J$ is the tensor product of the $B_{J_i}$ in $\SBim$. So the object $B_J$ is already isomorphic to an object in $\fooDC$.

However, if we wished, we could extend the diagrammatic calculus to include thick lines labeled by disconnected parabolic subsets $J$. In fact, all the diagrammatic conventions and
relations above work verbatim in this context, once one defines the morphism $a_i$ correctly. If we let $\sb_J$ (the source in the graph $\Gamma_{w_J}$) be the concatenation of
$\sb_{J_i}$ (and the same for $\tb$), then we may view the path morphism from $\sb_J \to \tb_J$ diagrammatically as the horizontal concatenations of the path morphism for each
component. To define $a_i$ for $J$, define it for the connected subgraph of which $i$ is a part, and then extend it via the identity map to the remainder of $J$, possibly crossing the
sideways $i$-strand across various distant identity maps. Thanks to distant sliding rules, this $a_i$ has all the desired properties. Theorem \ref{MainThm} is easily adapted to
disconnected $J$ as well. \end{remark}

\section{Induced Trivial Representations}
\label{sec-induced}
%
%
%

Recall from Sections \ref{subsec-hecke} and \ref{subsec-soergel} that the induced (left) representation $T_J$ is the left ideal generated by $b_J$ in $\HB$. To categorify this, we wish
to use the ``left ideal" generated by $B_J$ in $\fooBim$, or in the smaller partial idempotent completion $\BSBim(B_J)$. The diagrammatic version of this ideal in $\fooDC$ would be all
pictures which have a thick line labelled $J$ appearing on the right. However, because we will eventually take idempotent completions, the source and target of our diagram may be assumed to be an object in $\DC$ tensored with $J$. We may as well let the remainder of the diagram be a picture in $\DC$, and view the thick line on the right as some
sort of ``membrane" which interacts in a specific way with the morphisms in $\DC$. This is akin to the categorification of $T_J$ given by the category $\JBim$, which takes a Bott-Samelson bimodule and restricts from $R$ to $R^J$ on the right.

\begin{defn} Let $\TC_J$ be the category defined as follows. Objects are sequences $\ii$ of indices in $I$, just as for $\DC$. Morphisms between $\ii$ and $\jj$ are given by (linear
combinations of) pictures on the plane, with appropriate top and bottom boundary, which include a membrane on the right labelled $J$. These pictures are constructed out of the
generators of $\DC$ and the \emph{left-facing} thick trivalent vertex, which is the only interaction with the membrane (see the picture below). Strands involved in a trick trivalent
vertex must be labeled by $i \in J$. The relations between these morphisms are given by those relations of $\DC$ as well as the left versions of \eqref{thicktridot},
\eqref{thicktriass}, \eqref{thicktri6}, and \eqref{thicktri4}. This category is equipped with an obvious monoidal action of $\DC$ on the left. We view morphisms as being equipped with
the structure of a left $R$-module, by placing double dots on the left of the diagram, and a right $R^J$-module, by placing symmetric polynomials immediately to the left of the
membrane. \end{defn}

Here is an example morphism, for some $J$ containing $\{1,2,4\}$. Note that there is no assumption that $J$ be connected. \igc{1}{TJmorphismexample}

\begin{remark} The right action of $R^J$ on morphisms is well-defined. That is, it does not matter in which region to the left of the membrane one places the polynomial, since the
polynomial is symmetric in $W_J$ and therefore slides freely across any line labelled $i \in J$. Only these lines may be involved in thick trivalent vertices with the membrane, so only
these lines separate the regions in question. \end{remark}

\begin{defn} There is a functor $\FC_J$ from $\TC_J$ to $\JBim$ defined as follows. The object $\ii$ is sent to $B_{\ii}$, restricted so that it is a right $R^J$-module
instead of a right $R$-module. Morphisms using usual Soergel diagrammatics are sent to their usual counterparts in $\BSBim$, which are obviously also right $R^J$-module maps. The image
of the thick trivalent vertex is:

\begin{eqnarray}
\ig{.6}{thicktrimembrane} & \auptob{R}{R \ot_{R^i} R}  & \amapsuptob{f}{f \ot 1}\\
\igv{.6}{thicktrimembrane} & \auptob{R \ot_{R^i} R}{R}  & \amapsuptob{f \ot g}{f \partial_i(g)}
\end{eqnarray}
These are clearly maps of left $R$-modules and right $R^J$-modules. \end{defn}

\begin{defn} There is a functor from $\TC_J$ to the diagrammatic version of $\fooDC$ defined as follows. The object $\ii$ in $\TC_J$ is sent to $\ii \ot J$ in $\fooDC$. The map is
given on morphisms by interpreting the membrane as a thick line labelled $J$, with nothing to the right of it. When $J$ is disconnected, the object $J \in \fooDC$ is understood via Remark \ref{disconnectedremark}. \end{defn}

\begin{claim} These functors are well defined, and preserve the $(R,R^J)$-bimodule structure of Hom spaces. The composition of functors from $\TC_J$ to $\fooDC$ and then to $\fooBim$ is
equal to the composition of functors from $\TC_J$ to $\JBim$ and then to $\fooBim$ by inducing from $R^J$ to $R$. \label{functorscompose} \end{claim}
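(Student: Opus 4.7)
The plan is to verify the three assertions in the claim separately, noting that each is essentially a check on generators plus a compatibility observation.

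First I would verify well-definedness of the functor $F_1\colon \mT_J \to \Hom_{\BS}(R^J,R)$. The images of the usual Soergel generators are the same $R$-bimodule maps as in $\mB_{\BS}$, which restrict to $R$-$R^J$-bimodule maps by forgetting part of the right action. The only new content is the thick trivalent vertex. Its image $f\mapsto f\ot 1$ is manifestly left $R$-linear; right $R^J$-linearity follows because $i\in J$ forces $R^J\subset R^i$, so any $h\in R^J$ passes across the $\ot_{R^i}$. The same argument works for $f\ot g\mapsto f\partial_i(g)$, using that $\partial_i$ is $R^i$-linear and $h\in R^J$ is $s_i$-invariant. Then I would check the left analogues of \eqref{thicktridot}, \eqref{thicktriass}, \eqref{thicktri6}, \eqref{thicktri4}: each is a standard Frobenius identity for $R$ as a Frobenius algebra over $R^i$, and each identity already appears inside $\mB_{\BS}$ as a relation on pieces of the usual diagrammatic calculus, so there is nothing new to verify beyond citing the relations used to derive the right analogues in \S\ref{subsec-augmented}.

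Next I would verify well-definedness of $F_2\colon \mT_J \to \mB_{\foo}$. This is essentially tautological given Theorem \ref{defnfooworks}: the generators of $\mT_J$ are a subset of the generators of $\mB_{\foo}$ (viewing the membrane as a $J$-labelled thick line with empty right region and the membrane-side thick trivalent vertex as the left-facing one), and the relations imposed in $\mT_J$ are exactly the left-facing versions of the thick trivalent relations already present in $\mB_{\foo}$. The $R$-$R^J$-bimodule structure on Hom spaces is preserved by both functors because in both targets the left $R$-action is implemented by placing double dots in the leftmost region, and the right $R^J$-action by placing symmetric polynomials in the region immediately left of the membrane; nothing in either target allows these polynomials to interact nontrivially with the membrane beyond the freedom already built into the definition.

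Finally I would check that the two compositions $G_2\circ F_2$ and $G_1\circ F_1$ agree as functors into $R$-bimodules. On objects this is the observation $B_\ii\ot B_J = B_\ii\ot_{R^J}R$ (up to the grading shift that is already absorbed into the definition of $B_J$), which is exactly the image of $\Res B_\ii$ under induction $-\ot_{R^J}R$. On each of the usual $\mB_{\BS}$ generators, both compositions produce the same bimodule map tensored with the identity of $R$ over $R^J$, so agreement is immediate. The only nontrivial generator is the thick trivalent vertex, and here I would argue: under $G_2\circ F_2$, the thick trivalent vertex in $\mB_{\foo}$ is characterized by the fact that it sends $1$-tensors to $1$-tensors (as recalled in \S\ref{subsec-augmented} after (\ref{defverythicktri}) and used throughout \S\ref{subsec-projectors}), so on an extremal tensor $f\ot 1\ot\cdots\ot 1\ot h$ in $B_\ii\ot B_J$ it produces the corresponding extremal tensor in $B_\ii\ot B_i\ot B_J$, which after identifying $-\ot B_J$ with $-\ot_{R^J}R$ is exactly $f\ot\cdots\ot 1\ot 1\ot h \mapsto f\ot\cdots\ot(1\ot 1)\ot h$; under $G_1\circ F_1$ it is the induction of $f\mapsto f\ot 1$ on the rightmost tensor factor, giving the same map. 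The downward vertex is handled identically using $f\ot g\mapsto f\partial_i(g)$ and the dual characterization.

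The main obstacle, such as it is, is purely notational bookkeeping: one must keep careful track of which tensor factor the new $B_i$ sits in relative to the ``absorbed'' $\ot_{R^J}R$, and which grading shifts are folded into the definitions $B_J = R\ot_{R^J}R\{-d_J\}$ versus the induction functor (which carries no shift). Once the conventions are lined up, the verification reduces to $R$-bilinearity plus the single equality of images on a $1$-tensor, which is immediate from the $1$-tensor-preservation of the augmented thick trivalent vertex established in \S\ref{subsec-augmented}.
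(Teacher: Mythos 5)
Your verification is correct and follows essentially the same route as the paper, which simply declares the claim ``entirely obvious'' from the calculations and remarks of the previous chapter: you are just spelling out the generator-by-generator checks (bilinearity of the membrane vertex via $R^J\subset R^i$, the tautological inclusion of relations into $\mB_{\foo}$, and agreement of the compositions via $1$-tensors and the Demazure formula) that the paper leaves implicit. The grading-shift bookkeeping you flag is real but purely conventional, and your treatment of it is consistent with the paper's own silence on the matter.
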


\begin{proof} Given the calculations and remarks of the previous chapter, it is entirely straightforward to check this on objects and on generating morphisms. \end{proof}

Consider the map $\Hom_{\TC_J}(\ii,\jj) \to \Hom_{\fooDC}(\ii J,\jj J)$ given by this functor. We will show that this map is an injection. It is not a surjection,
essentially because it misses polynomials to the right of the thick line. For example, consider the left side of \eqref{thickbroken}. This does not correspond to a map in $\TC_J$ from
$\emptyset \to \emptyset$ because it factors in $\fooDC$ through an object $\emptyset$ without $J$ on the right, so can not be described with a membrane. Thanks to \eqref{thickbroken},
however, we may express it as a linear combination of morphisms in the image of $\TC_J$ but with added polynomials on the right. This is the only obstruction to being a surjection, that
is, there will be an isomorphism \begin{equation} \label{basechangeTJ} \Hom_{\TC_J}(\ii, \jj) \ot_{R^J} R \to \Hom_{\fooDC}(\ii J, \jj J).\end{equation} Because any polynomial
in $R^J$ will slide across a thick $J$-colored strand, placing this polynomial to the left of the membrane and applying the functor agrees with placing it on the far right of the
diagram after applying the functor. Thus this map is well-defined.

Recall from Section \ref{subsec-soergel} that for two $R-R^J$-bimodules $X,Y$ in $\JBim$ we have an $R$-bimodule isomorphism $\Hom_{\SBim}(X \ot_{R^J} R,Y \ot_{R^J} R) \cong
\Hom_{\JBim}(X,Y) \ot_{R^J} R$.

\begin{thm} The functor from $\TC_J$ to $\JBim$ is an equivalence of categories. \label{mainthminduced} \end{thm}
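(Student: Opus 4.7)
The plan is to prove essential surjectivity, fullness, and faithfulness separately, using Williamson's induction isomorphism together with Claim~\ref{functorscompose} as the bridge between $\mT_J$, $\Hom_{\BS}(R^J,R)$, and the diagrammatic $\mB_{\foo}$ (equivalent, by Theorem~\ref{defnfooworks}, to the bimodule version). Essential surjectivity is immediate: every generator $\Res B_{\ii}$ of $\Hom_{\BS}(R^J,R)$ is the image of the object $\ii$. The key external ingredient for what remains is Williamson's isomorphism
\begin{equation*}
\HOM_{\mB}(B_{\ii J},B_{\jj J}) \;\cong\; R \otimes_{R^J} \HOM_{\Hom_{\BS}(R^J,R)}(\Res B_{\ii},\Res B_{\jj}),
\end{equation*}
which expresses induction $\Hom_{\BS}(R^J,R) \to \mB$ as tensoring by the free right $R^J$-module $R$, pins down the graded rank of the right hand side via the Soergel--Williamson pairing, and shows that induction is injective on $\HOM$ spaces (by extracting the $1 = g^*_{w_J}$ coefficient in any Frobenius dual basis $\{g^*_r\}_{r \in W_J}$ of $R$ over $R^J$).

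For fullness I pick a morphism represented by a diagram $D \in \Hom_{\mB_{\foo}}(\ii J, \jj J)$ and normalise it using the augmented thick calculus of Section~\ref{subsec-augmented}. The broken-line relation~\eqref{thickbroken}, the decomposition~\eqref{thickthickidemp}, the Frobenius/associativity axioms~\eqref{thickunit}, \eqref{thickassoc}, together with the thick sliding rules~\eqref{thicktri6}, \eqref{thicktri4}, let me absorb all material lying to the right of the right-most thick $J$-strand into polynomial insertions $g^*_r$ immediately to the right of that strand, leaving to the left a bona fide $\mT_J$-diagram $D_r$; so $D = \sum_r g^*_r \cdot D_r$. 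Matching this against the Williamson expansion shows that each element of $\HOM_{\Hom_{\BS}(R^J,R)}(\Res B_{\ii},\Res B_{\jj})$ is the image of some $\mT_J$-diagram $D_r$, which is fullness.

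For faithfulness I plan a graded-rank comparison on right $R^J$-modules. The functor is right $R^J$-linear, and by fullness it surjects $\Hom_{\mT_J}(\ii,\jj) \twoheadrightarrow \HOM_{\Hom_{\BS}(R^J,R)}(\Res B_{\ii},\Res B_{\jj})$; the target is free of known graded rank. It therefore suffices to bound the graded rank of $\Hom_{\mT_J}(\ii,\jj)$ from above by the same value, which I do by applying the same normalisation procedure inside $\mT_J$ to produce a graded $R^J$-spanning set of the correct size. A graded right $R^J$-linear surjection from a module of graded rank at most $n$ onto a free module of rank $n$ is an isomorphism, finishing the proof.

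The main obstacle is the normalisation argument powering both fullness and the upper bound on $\Hom_{\mT_J}(\ii,\jj)$: one must verify that every $\mB_{\foo}$-diagram (resp.\ $\mT_J$-diagram) can be canonically rewritten in the form ``polynomial on the right'' $\cdot$ ``$\mT_J$-diagram'' (resp.\ the analogous canonical $\mT_J$-form). The delicate configurations are those where internal thick strands feed into the rightmost $J$-strand through generalised thick trivalent vertices; these require iterated applications of~\eqref{thickthickidemp}, \eqref{thicktri6}, and~\eqref{thicktri4} to collapse the internal thick structure before polynomials can be pushed to the right via~\eqref{thickbroken}. Once this normal form is in hand the rest of the proof is formal.
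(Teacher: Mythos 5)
Your proposal is correct and follows essentially the same route as the paper, which proves the theorem jointly with Proposition \ref{homsinTJ}: the same normalisation via (\ref{thickthickidemp}) merging the thick $J$-strands to write any morphism of $\Hom_{\mB_{\foo}}(\ii J,\jj J)$ as $\sum_r g^*_r$ times the image of a $\mT_J$-diagram, the same normal form for $\mT_J$-morphisms with legs a fixed reduced expression $x$ leaving the membrane, and the same external inputs (Williamson's induction isomorphism and Claim \ref{functorscompose}). Your faithfulness step is only a graded-rank repackaging of the paper's construction of an explicit inverse via the surjection $M=\Hom_{\mB_{\BS}}(\ii x,\jj)\phi_{x,x}\twoheadrightarrow\Hom_{\mT_J}(\ii,\jj)$, and rests on the same normal-form lemma (in particular on the observation that the box morphism may be taken invariant under the projector $\phi_{x,x}$, which is what makes your spanning set ``of the correct size'').
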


\begin{prop} The map in \eqref{basechangeTJ} is an isomorphism. \label{homsinTJ} \end{prop}

\begin{cor} The space $\Hom_{\TC_J}(\ii,\jj)$ is a free left $R$-module of graded rank $v^{-d_J}\epsilon(b_Jb_{\ii}b_{\omega(\jj)})$. \end{cor}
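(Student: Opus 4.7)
The plan is to apply Proposition \ref{homsinTJ} and translate graded ranks between the $\mB_{\foo}$-side and the $\mT_J$-side using the Soergel--Williamson pairing on $\mH$ together with a Hilbert-series comparison between $R$ and $R^J$.

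First I would establish that $\Hom_{\mT_J}(\ii,\jj)$ is free both as a left $R$-module and as a right $R^J$-module. Via the equivalence $\mT_J \simeq \Hom_{\BS}(R^J,R)$ of Theorem \ref{mainthminduced}, this Hom space is the Hom space between two restrictions of Bott--Samelson bimodules in $\Hom(R^J,R)$, and the one-sided freeness of singular Soergel bimodules recalled in Section \ref{subsec-soergel} gives freeness on both sides. Write $g$ and $h$ for its left $R$- and right $R^J$-graded ranks respectively.

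Next I would compute $h$ through the isomorphism of Proposition \ref{homsinTJ}. Since base change $(-)\ot_{R^J} R$ preserves right rank, the right $R$-graded rank of $\Hom_{\mB_{\foo}}(\ii J,\jj J)$ equals $h$; and for Soergel bimodules the left and right $R$-graded ranks of any Hom space coincide, both equal to the Soergel--Williamson pairing value applied to $[B_\ii B_J]=b_\ii b_J$ and $[B_\jj B_J]=b_\jj b_J$:
\[
h \;=\; \epsilon\bigl(b_\jj b_J\,\omega(b_\ii b_J)\bigr) \;=\; \epsilon(b_\jj b_J^2 b_{\omega(\ii)}) \;=\; [J]\,\epsilon(b_\jj b_J b_{\omega(\ii)}),
\]
using $\omega(b_J)=b_J$ (since $w_J$ is an involution) and $b_J^2=[J]\,b_J$. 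I would then convert $h$ to $g$ via the Hilbert-series identity $\mathrm{Hilb}(R)=v^{d_J}[J]\,\mathrm{Hilb}(R^J)$, i.e.\ $R$ is a graded-free $R^J$-module of graded rank $v^{d_J}[J]$; comparing the two expressions for $\mathrm{Hilb}(\Hom_{\mT_J}(\ii,\jj))$ yields $h=v^{d_J}[J]\,g$, so $g=v^{-d_J}\,\epsilon(b_\jj b_J b_{\omega(\ii)})$.

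The remaining, and in my view only subtle, step is to match this with the stated formula $v^{-d_J}\epsilon(b_Jb_\ii b_{\omega(\jj)})$, i.e.\ to verify the identity $\epsilon(b_\jj b_J b_{\omega(\ii)})=\epsilon(b_J b_\ii b_{\omega(\jj)})$. This is where I expect the main obstacle. By cyclicity of $\epsilon$, both sides compute, up to the factor $[J]$, the graded rank of $\HOM_{\mB}(X,Y)$ and $\HOM_{\mB}(Y,X)$ respectively for $X=B_\ii B_J$ and $Y=B_\jj B_J$, so the identity is equivalent to the symmetry $\grdrk\HOM(X,Y)=\grdrk\HOM(Y,X)$ of the Soergel pairing on such bimodules. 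This symmetry is standard (and in our setting can be seen diagrammatically using the self-biadjointness of $B_i$ and of $B_J$ as in Section \ref{subsec-augmented}), so the final matching is routine once formulated.
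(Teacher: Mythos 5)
Your calculation up through $g=v^{-d_J}\epsilon(b_{\jj}b_Jb_{\omega(\ii)})$ is correct, and it is a genuinely different route from the paper's: the paper gets the corollary in one line from the equivalence of Theorem \ref{mainthminduced} together with the already-quoted Williamson graded-rank formula for $\Hom(R^J,R)$ (Section \ref{subsec-soergel}), whereas you reconstruct that rank from Proposition \ref{homsinTJ}, the Soergel hom formula in $\mB$, and a Hilbert-series comparison of left $R$- versus right $R^J$-ranks. Note also that your expression can be rewritten as $v^{-d_J}\epsilon(b_{\ii}b_Jb_{\omega(\jj)})$, using the $\Zvv$-linear antiautomorphism of $\mH$ fixing each $b_i$ (hence fixing $b_J$, as $w_J^{-1}=w_J$) together with the fact that $\epsilon$ is preserved by it; this is exactly the symmetry $\grdrk\HOM(X,Y)=\grdrk\HOM(Y,X)$ you mention.

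The gap is in your final matching step, and it cannot be repaired as proposed: that symmetry never moves $b_J$ out of the middle position, and the identity you actually need, $\epsilon(b_{\jj}b_Jb_{\omega(\ii)})=\epsilon(b_Jb_{\ii}b_{\omega(\jj)})$, is false in general. Take $J=\{1\}$, $\ii=(2,1)$, $\jj=(2)$: then $\epsilon(b_{\jj}b_Jb_{\omega(\ii)})=\epsilon(b_2b_1b_1b_2)=\qtwo\,\epsilon(b_2b_1b_2)=v^4+2v^2+1$, while $\epsilon(b_Jb_{\ii}b_{\omega(\jj)})=\epsilon(b_1b_2b_1b_2)=\qtwo v^3+v^2=v^4+2v^2$. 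Moreover the value your argument produces is the correct one here: $\Hom_{\mT_J}((2,1),(2))$ contains the nonzero degree $-1$ morphism which is the identity on the $2$-strand and absorbs the $1$-strand into the membrane by the left-facing thick trivalent vertex (under the functor, $x\ot y\ot z\mapsto x\ot y\,\partial_1(z)$), so the graded rank must contain a $v^{-1}$ term, as $v^{-1}\epsilon(b_{\ii}b_Jb_{\omega(\jj)})$ does and $v^{-1}\epsilon(b_Jb_{\ii}b_{\omega(\jj)})$ does not. In other words, the displayed formula in the corollary should be read with $b_J$ sandwiched between $b_{\ii}$ and $b_{\omega(\jj)}$ (which is what the Williamson formula invoked in the paper's proof gives, and what your computation gives); the two placements agree only in special cases (e.g.\ when $\ii$ uses only indices of $J$, or when $\jj=\emptyset$), so no general symmetry argument can identify them, and your last step as written would fail.
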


\begin{proof} First we show that the map in \eqref{basechangeTJ} is surjective. Consider an arbitrary morphism in $\fooDC$ from $\ii J$ to $\jj J$. We can merge the two $J$ inputs (top right, bottom
right) using the relation \eqref{thickthickidemp}.

\psfrag{psSigma}{\large{$\sum$}}
\psfrag{psgr}{$g_r$}
\psfrag{psgdualr}{$g^*_r$}
\igc{1}{homsinTJproof1}

Then, using the definition in $\fooDC$ of the very thick trivalent vertex, we may rewrite the equation as above. The lines emanating from the thick line in the final picture form some
reduced expression $\xb$ of $w_J$. The morphism inside the box (by which we always refer to the box in the bottom diagram above) may be different in each term of the sum. Note that the
morphism inside the box is a morphism between objects in $\DC$, and since $\DC$ includes fully faithfully into its partial idempotent completion $\fooDC$, we may assume that the
morphism inside the box only uses diagrams from $\DC$ (i.e. no further thick lines are present). Therefore, every morphism is in the image of \eqref{basechangeTJ}.

In fact, the calculation above implies that $\Hom_{\fooDC}(\ii J,\jj J)$ is precisely $M \ot_{R^J} R$, where $M = \Hom_{\DC}(\ii \xb,\jj) \phi_{\xb,\xb}$ is the space of all
morphisms in $\DC$ which fit inside the box, and which are unchanged when composed with the transition map $\phi_{\xb,\xb}$ along the $\xb$ input. Wh claim that any morphism in $\TC_J$
comes from a diagram in $M$. The point is that, using \eqref{thicktridot}, we can ensure that the sequence of colors which meets the membrane is arbitarily long, and then using \eqref{thicktriass}, \eqref{thicktri6}, and \eqref{thicktri4} we can reduce this sequence to any given expression $\xb$ for $w_J$.

\igc{1}{homsinTJproof2}

Therefore we have a surjective map $M \to \Hom_{\TC_J}(\ii,\jj)$, which provides the inverse of the Hom space map above. This proves the proposition.

Because the various functors compose properly as in Claim \ref{functorscompose}, it would be impossible for the above map to be an isomorphism unless the functor from $\TC_J \to
\JBim$ is fully faithful. This proves the theorem. Thus, too, is the Corollary proven, since Hom spaces in $\JBim$ are free as right or left modules, and graded ranks in
$\JBim$ are known already to agree with this formula (see the end of Section \ref{subsec-soergel}). \end{proof}

\newpage

\appendix
\section{Summary of notation}
\label{listofnotation}
%

\begin{itemize}[leftmargin=*]

\item $W$ is the symmetric group $S_{n+1}$. $I = \{1, \ldots, n\}$ is the indexing set of simple reflections. $w_0$ is the longest element.

	\item $J$ and $K$ are \emph{parabolic subsets}, subsets of $I$.

	\item $W_J$ is the parabolic subgroup attached to $J$, with longest element $w_J$. The length of $w_J$ is $d_J$. The Poincar\'e polynomial of $W_J$ is $\qJ$.

	\item $\ii$ is a sequence of indices in $I$. $s_{\ii}$ is the corresponding element in $W$. $\JJ$ is a sequence of parabolic subsets.

	\item $\HB$ is the Hecke algebra of $S_{n+1}$. $\HB_J$ is the subalgebra for $W_J$.

	\item $\{b_w\}$ is the Kazhdan-Lusztig basis of $\HB$. $b_i = b_{s_i}$ and $b_J = b_{w_J}$.

	\item $T_J$ is the left ideal of $b_J$ in $\HB$.
	
	\bigskip

	\item $R$ is the polynomial ring in $n+1$ variables $f_i$, over a base field $\Bbbk$. It has an action of $W$. It is graded, with $f_i$ in degree $2$.

	\item $R^J$ are the invariants of $R$ under $W_J$. $R^i$ are the invariants under $s_i$.

	\item $\HOM$ denotes the graded vector space of morphisms of all degrees in a graded category.

	\item $B_i = R \ot_{R^i} R(1)$. $B_J = R \ot_{R^J} R(d_J)$. The grading shift places the \emph{1-tensor} $1 \ot 1$ in negative degree.

	\item $B_{\ii} = B_{i_1} \ot_R \cdots \ot_R B_{i_d}$. $B_{\JJ} = B_{J_1} \ot_R \cdots \ot_R B_{J_d}$.

	\item $\BSBim$ is the full subcategory of graded $R$-bimodules with objects $B_{\ii}$. $\DC$ is the diagrammatic category which is (usually) equivalent to it.

	\item $\fooBim$ is the full subcategory of graded $R$-bimodules with objects $B_{\JJ}$. $\fooDC$ is its diagrammatic version.

	\item $\JBim$ is the full subcategory of $(R,R^J)$-bimodules whose objects are restrictions of $B_{\ii}$. $\TC_J$ is the diagrammatic version.

	\item $\SBim$ is the Karoubi envelope of $\BSBim$.

	\item $\pa_i$ and $\pa_w$ are Demazure operators. See \cite{EKho}.

\bigskip

	\item $\tilde{\Gamma}_w$ is the (expanded) graph of all reduced expressions of $w$. If $w$ is omitted, it is usually $w_0$.

	\item $\Gamma_w$ is the conflated graph of reduced expressions.

	\item $\xb, \yb, \zb$ are vertices either in $\tilde{\Gamma}_{w}$ or in $\Gamma_w$. In $\tilde{\Gamma}_w$ they are sequences of indices, like $\ii$, but unlike $\ii$ they are always assumed to be reduced expressions of the element $w$ in question. In $\Gamma_w$ they are equivalence classes of reduced expressions.
	
	\item $B_{\xb}$ is the corresponding object in $\BSBim$ or in $\DC$. When $\xb$ is a vertex in $\Gamma_w$, this represents a canonical isomorphism class in $\DC$.

	\item $\xb \downto \yb$ is some oriented path in $\tilde{\Gamma}_w$ or $\Gamma_w$. $\xb \downoneto \yb$ is a single oriented edge in $\Gamma_w$. $\psi_{\xb \downto \yb}$ is the corresponding path morphism.

	\item The \emph{aborted 6-valent vertex} is the map \eqref{whatisaborted}, which has the same source as a 6-valent vertex, and can be placed in its stead in the \emph{aborted} version of a path morphism.

	\item $\overline{D}$ is the path $D$ in reverse, or the diagram $D$ turned upside-down.
	
	\bigskip
	
	\item $\sb$ is the unique source in $\Gamma_{w_0}$. $\tb$ is the unique sink.
	
	\item Anything with subscript $J$ is the corresponding thing for $\Gamma_{w_J}$ for a connected parabolic subset $J$.
	
	\item $Z = \psi_{\sb \downto \tb}$.
	
	\item $\phi_{\xb, \yb}$ corresponds to the path $\xb \downto \tb \upto \sb \downto \yb$.
	
	\item $\chi_{\xb, \yb}$ corresponds to the path $\xb \upto \sb \downto \tb \upto \yb$.
	
	 \bigskip
	
	\item Color conventions: blue red green purple black is $12345$.
	
	\item $n=5$: $\sb^R = 1\ 21\ 321\ 4321\ 54321$.  $\tb^R = 5\ 45\ 345\ 2345\ 12345$.
	
	\item $n=5$: $\sb^R_3 = 12345\ 1234\ 1\ 21\ 321$. It ends in $\sb^R_{\{1,2,3\}}$.  $\tb^R_3 = 54321\ 5432\ 5\ 45\ 345$. It ends in $\tb^R_{\{3,4,5\}}$.
	
	\item $F_{1,5}$ is the \emph{flip}, a path $123454321 \downto 543212345$. \igc{1.2}{flip}
	
	\item $FR_{\tb,i}$ is a particular sequence of flips, which ends in $\tb$, and whose source has $i$ on the right. $FR_{\sb, i}$ starts at $\sb$, and ends with $i$ on the right.
	
	\item $V$ is a particular path from $\sb^R$ to $\tb^R$, built inductively. $FR_{\tb,1}$ is the sequence of flips above the smaller $V$. \igc{1}{Vdefn}
	
	\item Color conventions: teal and brown are arbitrary indices in $J$.
	
	\item $a_i = a^R_{\xb,i}$ is a particular diagram with top and bottom boundary $\xb$ and side boundary $i$ on the right. It is only defined explicitly for $\xb = \sb$ or $\xb = \tb$. Here is $a^R_{\tb^R,i}$. \igc{1}{thicktridefn1}
	
	\item $\xi_J$ is a map from $B_J$ or $B_{\xb}$ to $\emptyset$, which puts a dot on every strand.
	
\end{itemize}
\newpage

%
%

\bibliographystyle{alpha}
\bibliography{mastercopy}{}

\vspace{0.1in}
 
\noindent
{\textsl \small Ben Elias, Department of Mathematics, University of Oregon, Eugene, OR 97403}

\noindent 
{\tt \small email: belias@uoregon.edu}

\end{document}